	\def\?[#1]{\textbf{[#1]}\marginpar{\Large{\textbf{??}}}}%
\numberwithin{equation}{section}
\newcommand{\vertiii}[1]{{\left\vert\kern-0.25ex\left\vert\kern-0.25ex\left\vert #1 
		\right\vert\kern-0.25ex\right\vert\kern-0.25ex\right\vert}}
\newcommand{\be}{\begin{equation}}
	\newcommand{\ee}{\end{equation}}
\newcommand{\ov}{\overline}
\newcommand{\simpletree}{\begin{tikzpicture}[scale=0.25]
		[
		level 1/.style={sibling distance=0.5cm},
		] 
		\coordinate (Root) [fill] circle (6pt)
		child { [fill] circle (6pt)
		}
		child  { [fill] circle (6pt)
		}
		child  { [fill] circle (6pt)
		}
		child  { [fill] circle (6pt)
		}
		child  { [fill] circle (6pt)
		}
		;
	\end{tikzpicture}
}   
\newcommand{\e}{{\mathrm{e} } }
\renewcommand{\Im}{\mathop{\rm Im}\nolimits}
\theoremstyle{plain}
\newtheorem{thm}{Theorem}[section]
\newtheorem{prop}{Proposition}[section]
\newtheorem{cor}[prop]{Corollary}
\newtheorem{lem}[prop]{Lemma}
\newtheorem{definition}[prop]{Definition}
\theoremstyle{definition}
\newtheorem{rem}[prop]{Remark}
\numberwithin{equation}{section}
\def\squarebox#1{\hbox to #1{\hfill\vbox to #1{\vfill}}}
\newcommand{\noi}{\noindent}
\title[Almost sure global nonlinear smoothing for the 2D NLS ]
{Almost sure global nonlinear smoothing for the 2D NLS}
\author{Chenmin Sun, Nikolay Tzvektov}
\address{}
\email{}
\def\11{{\rm 1~\hspace{-1.4ex}l} }
\def\R{\mathbb R}
\def\C{\mathbb C}
\def\Z{\mathbb Z}
\def\N{\mathbb N}
\def\T{\mathbb T}
\def\fk{\mathfrak}
\begin{document}

\begin{abstract}
	In this article, we prove an almost-sure global in time nonlinear smoothing effect for NLS on the two-dimensional torus. For deterministic data, this phenomenon was proved for the NLS on the circle by Erdo\u{g}an--Tzirakis, which remains unknown on multi-dimensional torus. 
	Our argument is based on a quantitative quasi-invariance of Gaussian measures with covariance operator $(1-\Delta)^{-s}$ for $s>2$.
\end{abstract}
	
	\maketitle 

\section{Introduction}

\subsection{Nonlinear smoothing effect for NLS}
In this article, we consider the defocusing nonlinear Schr\"odinger equation (NLS) on the two-dimensional torus $\T^2:=\R^2/(2\pi\Z)^2$:
\begin{align}\label{NLS}
	\begin{cases}
		& i\partial_t u + \Delta u = |u|^{2m-2}u,\quad (t,x) \in \R\times\T^2,\\
		& u|_{t=0}=\phi,
	\end{cases}
\end{align}
where $m\geq 2$ is an integer.
The equation \eqref{NLS} is a Hamiltonian system with conserved mass and energy:
\[
M[u]:=\int_{\T^2} |u|^2 \, dx,\qquad
H[u]:=\frac{1}{2}\int_{\T^2}|\nabla u|^2\,dx+\frac{1}{2m}\int_{\T^2}|u|^{2m}\,dx.
\]
It is known that \eqref{NLS} is globally well-posed on $H^{1}$ for any $m\geq 2$, thanks to the conserved energy and the Strichartz estimates in \cite{BoGAFA} or \cite{BGT-1}.

In this work, we study the smoothing property of the solutions to \eqref{NLS}. In sharp contrast with parabolic equations, the linear propagator of dispersive PDEs does not regularize the initial data; we are thus interested in the \emph{nonlinear smoothing effect} (NoSE) of solutions modulo the linear part under a suitable gauge transform.

To the authors' knowledge, (NoSE) can be used to extend fine properties of linear solutions, such as the Talbot effect (see \cite{Osko,KaRo,Ro} and references therein) and the pointwise convergence property (see \cite{DaKenig,DuGuLi,DuZh} for the most celebrated results), to nonlinear solutions. It is due to the point of view that the nonlinearity can be viewed as a qualitative perturbation (not small) of the linear evolution, in the setting of problems under consideration.
It is shown in \cite{ErTz} (and further extended in \cite{McC}) that the one-dimensional NLS on $\mathbb{T}$ exhibits (NoSE). Consequently, the authors proved the Talbot effect for NLS, which was observed numerically in \cite{Chen}.
More precisely, the nonlinear smoothing property Erdo\u{g}an--Tzirakis proved states as follows:
\begin{thm}\label{ET}\cite{ErTz}
	Let $\sigma>0$ and $T>0$. For any $\phi\in H^{\sigma}(\T)$, the solution of the cubic NLS  \textup{(\eqref{NLS} with $m=2$)} with initial data $\phi$ admits the decomposition
	\[
	u(t)=\e^{-\frac{it}{\pi}\int_{\T}|\phi|^2 \, dx}\cdot\big(\e^{it\partial_x^2}\phi + w(t)\big),
	\]
	on $t\in [-T,T]$, where $w(t)\in C([-T,T];H^{s_1}(\T))$ with $s_1<\sigma+\min(2\sigma,1/2)$.
\end{thm}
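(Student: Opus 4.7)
My plan is to first remove the time-resonant (zero-modulation) contributions of the cubic nonlinearity by a Wick-ordering gauge transform, and then to control the Duhamel remainder $w = v - S(t)\phi$ by a trilinear smoothing estimate in Bourgain's $X^{s,b}$ spaces, where $S(t) := e^{it\partial_x^2}$. Set $\alpha := \frac{1}{\pi}\|\phi\|_{L^2(\T)}^2$ and $v(t,x) := e^{i\alpha t} u(t,x)$. Mass conservation gives $\alpha = \frac{1}{\pi}\|v(t)\|_{L^2}^2$ for all $t$. A direct Fourier-series computation, using $\|v\|_{L^2}^2 = 2\pi \sum_n |\hat{v}(n)|^2$, shows that the diagonal terms in $\widehat{|v|^2 v}(n)$ over $\{n_1 = n_2\} \cup \{n_2 = n_3\}$ total exactly $\alpha \hat v(n) - |\hat v(n)|^2 \hat v(n)$, so that $v$ obeys the Wick-ordered equation
\begin{equation*}
	i\partial_t v + \partial_x^2 v \;=\; \mathcal{N}(v) - F(v), \qquad v|_{t=0} = \phi,
\end{equation*}
with
\begin{equation*}
	\widehat{\mathcal{N}(v)}(n) \;=\; \sum_{\substack{n_1 - n_2 + n_3 = n \\ n_1 \ne n_2,\ n_3 \ne n_2}} \hat v(n_1) \overline{\hat v(n_2)} \hat v(n_3), \qquad \widehat{F(v)}(n) \;=\; |\hat v(n)|^2 \hat v(n).
\end{equation*}

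Writing $v = S(t)\phi + w$ gives $w(0) = 0$ and the Duhamel identity $w(t) = -i\int_0^t S(t-s)(\mathcal{N}(v) - F(v))(s)\,ds$. I would work in $X^{\sigma, b}_T$ with $b > 1/2$ close to $1/2$; the $\T$-Strichartz estimates underlying the $H^1$ well-posedness of \eqref{NLS} yield $\|v\|_{X^{\sigma, b}_T} \lesssim C(\|\phi\|_{H^\sigma}, T)$, and the linear Duhamel estimate $\|w\|_{X^{s_1, b}_T} \lesssim \|\mathcal{N}(v) - F(v)\|_{X^{s_1, b-1}_T}$ together with the embedding $X^{s_1, b}_T \hookrightarrow C([-T,T]; H^{s_1})$ reduces the whole problem to the trilinear smoothing bound
\begin{equation*}
	\|\mathcal{N}(v_1, v_2, v_3)\|_{X^{s_1, b-1}_T} \;\lesssim\; \prod_{j=1}^3 \|v_j\|_{X^{\sigma, b}_T}, \qquad s_1 < \sigma + \min(2\sigma, \tfrac12),
\end{equation*}
together with a much easier estimate on $F$; the latter is harmless because $\|\hat v\|_{\ell^\infty_n} \lesssim \|v\|_{L^2}$ makes $F(v)$ several derivatives smoother than $v$.

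The crux, and the principal obstacle, is the trilinear estimate on $\mathcal{N}$. The restricted summation ensures that the modulation
\begin{equation*}
	\Phi \;:=\; n^2 - n_1^2 + n_2^2 - n_3^2 \;=\; 2(n_1 - n_2)(n_3 - n_2)
\end{equation*}
is non-zero, so $|\Phi| \geq 2$, and by duality (or equivalently, by a normal-form integration by parts in time, in the spirit of Babin--Ilyin--Titi) the trilinear bound reduces to a weighted $\ell^2$ estimate for a trilinear multiplier in which a fractional power of $|\Phi|^{-1}$ may be traded against $\la n\ra^{s_1 - \sigma}$. Splitting into dyadic regimes, high$\times$high is benign because $|\Phi| \sim \la n_1\ra \la n_2\ra$ is very large, while the sharp case is high$\times$low with $\la n\ra \sim \la n_1\ra$ and $|\Phi| \sim \la n_1\ra\, |n_3 - n_2|$: the bound $s_1 - \sigma < 1/2$ comes from a divisor-level-set count of triples realizing a given value of $\Phi$, while at small $\sigma$ the low-frequency Sobolev weights $\la n_2\ra^\sigma, \la n_3\ra^\sigma$ are too weak to sum independently in $\ell^2$ and must absorb a share of $|\Phi|^{-1}$, giving the competing constraint $s_1 - \sigma < 2\sigma$. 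Once this trilinear estimate is in hand, a standard contraction/bootstrap in $X^{s_1, b}_T$ closes the argument and yields $w \in C([-T,T]; H^{s_1})$.
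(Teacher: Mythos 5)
Theorem \ref{ET} is quoted from \cite{ErTz} and the present paper gives no proof of it, so there is nothing internal to compare against; your proposal is, in outline, exactly the Erdo\u{g}an--Tzirakis argument: gauge away the resonant diagonal via Wick ordering and mass conservation, factor the modulation as $2(n_1-n_2)(n_3-n_2)\neq 0$ on the non-resonant set, and close via the trilinear smoothing estimate in $X^{\sigma,b}$ whose two regimes produce the constraints $s_1-\sigma<1/2$ (divisor counting) and $s_1-\sigma<2\sigma$ (low-frequency weights). Be aware that the entire difficulty of the theorem lives in that trilinear estimate, which you assert rather than prove, and that for $0<\sigma<1$ the global bound on $\|v\|_{X^{\sigma,b}_T}$ should be attributed to Bourgain's $L^2$-based theory (iterated on intervals of length depending only on the conserved mass) rather than to $H^1$ well-posedness; also the smoothing of $F(v)$ follows from $\langle n\rangle^{3\sigma}|\hat v(n)|^3\in\ell^{2/3}\subset\ell^2$, not from the $\ell^\infty$ bound alone. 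With those points made precise, the sketch matches the cited proof.
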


Let us also mention that the (NoSE) is used to prove exponential mixing for a randomly forced NLS on the 1D torus \cite{Xiang}. It serves as an important ingredient to prove the exponential asymptotic compactness property for the damped NLS.

For NLS on $\R^d$, the nonlinear smoothing effect was proved in \cite{Bobilinear,KeVa} as a consequence of the bilinear Strichartz estimate gaining anti-derivatives in the high-frequency part. In sharp contrast, due to the lack of dispersion on compact manifolds and worse bilinear Strichartz inequality (see \cite{BGT}), the (NoSE) for NLS with deterministic initial data remains unknown.

Nevertheless, it is reasonable to conjecture that such a phenomenon is not true for NLS on $\mathbb{T}^d$ with $d\geq 2$. We will support this conjecture in the context of the cubic NLS on $\T^2$ through the divergence of the second Picard iteration:
\begin{prop}\label{prop:lacksmoothing}
	Let $\sigma\geq 0$. There exist a bounded sequence $(\phi_{n})_{n\geq 1}$ in $H^{\sigma}(\T^2)$ and a countable set $E\subset\R$ such that for every $t\in\R\setminus E$,
	\[
	\lim_{n\to\infty} \Big\|\frac{1}{i}\int_0^t\e^{i(t-t')\Delta}\big(:|\e^{it'\Delta} \phi_n|^2\e^{it'\Delta}\phi_n : \big)\,dt'\Big\|_{H^{\sigma_1}(\T^2)}=+\infty,
	\]
	for any $\sigma_1>\sigma$, where
	\[
	:|v|^2v:\;=\;|v|^2v-\frac{1}{2\pi^2}\|v\|_{L^2}^2\,v
	\]
	is the renormalised cubic nonlinearity from the gauge transform.
\end{prop}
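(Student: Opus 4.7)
The plan is to Fourier-expand the second Picard iteration on $\T^2$, isolate the ``rectangular'' resonances that have no counterpart on $\T$, and exhibit a bounded sequence in $H^\sigma(\T^2)$ for which these resonant contributions alone force the $H^{\sigma_1}$ norm to blow up along the complement of a countable set of times.

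Writing $\hat\phi_n(k)=a_k$ and applying Duhamel, the Fourier coefficient at frequency $k$ of
\[
\Psi_n(t):=\frac{1}{i}\int_0^t e^{i(t-t')\Delta}\bigl(:|e^{it'\Delta}\phi_n|^2 e^{it'\Delta}\phi_n:\bigr)\,dt'
\]
equals, up to a harmless self-interaction term $-\frac{t}{i}|a_k|^2 a_k e^{-it|k|^2}$,
\[
\frac{e^{-it|k|^2}}{i}\sum_{\substack{k_1-k_2+k_3=k\\ k_1\ne k_2,\ k_3\ne k_2}}a_{k_1}\overline{a_{k_2}}a_{k_3}\int_0^t e^{-it'\Omega}\,dt',\qquad \Omega=-2(k_2-k_1)\cdot(k_2-k_3),
\]
the diagonals $k_1=k_2,\ k_3=k_2$ having been removed by the Wick renormalization $\frac{1}{2\pi^2}\|v\|_{L^2}^2 v$. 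Splitting the remaining sum according to $\Omega=0$ versus $\Omega\ne 0$, the resonant part
\[
\frac{t e^{-it|k|^2}}{i}\sum_{\substack{k_1-k_2+k_3=k\\ (k_2-k_1)\perp(k_2-k_3)\\ k_1\ne k_2,\ k_3\ne k_2}}a_{k_1}\overline{a_{k_2}}a_{k_3}
\]
grows linearly in $t$. On $\T$ the orthogonality of two integers forces one of them to be $0$, so the resonant set collapses to the (already removed) diagonal; that is precisely why the Erdo\u{g}an--Tzirakis smoothing of Theorem~\ref{ET} holds in one dimension, and why the genuine rectangular resonances on $\T^2$ are the natural obstruction.

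I would then take $\phi_n=c_n\sum_{k\in A_n}e^{ik\cdot x}$ supported on a set $A_n\subset\{k\in\Z^2:|k|\asymp N_n\}$ with $N_n\to\infty$, and $c_n\asymp |A_n|^{-1/2}N_n^{-\sigma}$, so that $\|\phi_n\|_{H^\sigma}\lesssim 1$. The set $A_n$ should be rich in rectangular configurations: for a well-chosen output frequency $k_n^\star$ with $|k_n^\star|\asymp N_n$, the number $R_n$ of triples $(k_1,k_2,k_3)\in A_n^3$ satisfying $k_1-k_2+k_3=k_n^\star$, $(k_2-k_1)\perp(k_2-k_3)$, $k_1\ne k_2$ and $k_3\ne k_2$ should be made as large as possible. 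A natural candidate is the dyadic box $A_n=([N_n,2N_n]\cap\Z)^2$, where integer rectangles with prescribed fourth vertex admit an explicit count. Bounding $\|\Psi_n(t)\|_{H^{\sigma_1}}$ from below by the single Fourier mode at $k_n^\star$ and estimating the non-resonant tail through $|1-e^{-it\Omega}|/|\Omega|\le\min(|t|,2/|\Omega|)$ together with the integrality $|\Omega|\gtrsim 1$ away from the resonant locus, one shows the resonant part of size $|t|\,|k_n^\star|^{\sigma_1}c_n^3 R_n$ dominates and tends to $+\infty$. The exceptional set $E$ emerges as the countable zero set of the almost-periodic function of $t$ encoding the total coefficient at $k_n^\star$, and contains in particular $t=0$.

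The delicate point is the arithmetic balance behind the choice of $A_n$ and $k_n^\star$. A construction based on a single rectangle only delivers divergence in the restricted range $\sigma_1>3\sigma$; to reach the optimal threshold $\sigma_1>\sigma$ one must exploit an essentially maximal density of rectangular quadruples in $A_n$ all ending at a common $k_n^\star$, while simultaneously keeping the non-resonant sum under sharp control so that no accidental cancellation restores smoothing. Morally this is equivalent to the failure of a trilinear Strichartz-type inequality at critical regularity on $\T^2$, which is exactly the mechanism that dimension one lacks.
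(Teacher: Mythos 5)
Your route (isolating the resonant set $\Omega=0$ and counting rectangles) is genuinely different from the paper's, and as written it has two real gaps. First, the construction that is supposed to reach the full range $\sigma_1>\sigma$ is missing: you yourself note that the dyadic box $A_n=([N_n,2N_n]\cap\Z)^2$ only yields divergence for $\sigma_1>3\sigma$ (with $c_n\sim N_n^{-\sigma-1}$ and $R_n\sim N_n^2$ rectangles per output mode, the resonant contribution per mode is $\sim|t|N_n^{-3\sigma-1}$, and even summing over all $\sim N_n^2$ output modes one only gets $\|\Psi_n(t)\|_{H^{\sigma_1}}\gtrsim |t|N_n^{\sigma_1-3\sigma}$). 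The passage from $3\sigma$ to $\sigma$ is the entire content of the proposition for $\sigma>0$, and "exploit an essentially maximal density of rectangular quadruples" is not an argument. Second, the claim that the non-resonant tail is dominated is unsubstantiated for the box datum: using $|1-e^{-it\Omega}|/|\Omega|\lesssim 1/|\Omega|$ and the level-set count of Lemma \ref{threevectorcounting}, the non-resonant sum at a fixed output mode is $\lesssim c_n^3 N_n^{2+\varepsilon}\log N_n$, i.e.\ comparable to the resonant part up to $N_n^{\varepsilon}$; ruling out cancellation therefore requires an actual argument (e.g.\ exact evaluation at generic $t$), not just the triangle inequality. The resonant mechanism \emph{can} be salvaged by taking the data to be a line of $\sim n$ high modes together with two low atoms placed so that the line direction is orthogonal to the difference of the atoms (a high--low--low interaction with only $O(1)$ triples per output mode), which gives $\|\Psi_n(t)\|_{H^{\sigma_1}}\gtrsim |t|\,n^{\sigma_1-\sigma}$ directly; but you do not produce such a datum.

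For comparison, the paper avoids resonances altogether: it takes $\widehat{\phi_n}$ to be a vertical slab $\{\xi=0,\ \eta\in[n,2n]\}$ of height $n^{-\sigma-1/2}$ plus two atoms at $(1,0)$ and $(2,0)$. The convolution structure forces the output at each frequency $(1,\eta)$ to come from exactly two triples, both with the \emph{fixed non-zero} resonance value $\Omega=2$, so the Duhamel factor is just $\sin(t)e^{-it}$ and the output reproduces the input's high-frequency profile: $\sim n$ modes of magnitude $|\sin t|\,n^{-\sigma-1/2}$ at frequency $\sim n$, whence $\|\Psi_n(t)\|_{H^{\sigma_1}}\gtrsim|\sin t|\,n^{\sigma_1-\sigma}$ and $E=\pi\Z$. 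The point of failure of smoothing in 2D is thus not rectangle resonances but the absence of any gain in a high--low--low interaction with bounded modulation — a mechanism your proposal does not capture.
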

\begin{rem}
	From the analysis in \cite{ErTz}, Proposition \ref{prop:lacksmoothing} is not true for the 1D torus $\T$.
\end{rem}

\subsection{Nonlinear smoothing effect for the random initial data}
It turns out that the nonlinear smoothing effect is, however, a collective property when considering random initial data distributed according to centered Gaussian measures. More precisely, we assume that initial data are distributed according to the Gaussian probability measure $\mu_s$, formally defined as ``$\frac{1}{\mathcal{Z}}\mathrm{e}^{-\frac{1}{2}\|u\|_{H^s}^2}du$'', induced by the random Fourier series
\begin{align}\label{randominitial}
	\phi^{\omega}(x):=\sum_{k\in\Z^2}\frac{g_k(\omega)}{\langle k\rangle^s}\mathrm{e}^{ik\cdot x},
\end{align}
where $\langle k\rangle:=\sqrt{1+|k|^2}$. It is well-known that $\mu_s$ is supported on functions in $H^{\sigma}(\T^2)$ for all $\sigma<s-1$.
Our first main result is the following:
\begin{thm}\label{globalstructure}
	Let $s>2$, $\sigma<s-1$, $s_1<s-\frac{1}{2}$ and $T\geq 1$. For $\mu_s$-almost every $\phi\in H^{\sigma}(\T^2)$, the solution of the cubic NLS \textup{(\eqref{NLS} with $m=2$)} admits the decomposition
	\begin{align}\label{recenter2}
		u(t)=\e^{-\frac{it}{2\pi^2}\int_{\T^2}|\phi^{\omega}|^2dx}\cdot \big(\e^{it\Delta}\phi^{\omega}+ w(t)\big),
	\end{align}
	on $t\in[-T,T]$, where $w(t)\in C([-T,T];H^{s_1}(\T^2))$ with at most polynomial growth in $|t|\leq T$.
\end{thm}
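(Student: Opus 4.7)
The plan is to combine a short-time nonlinear smoothing estimate for random initial data with a quasi-invariance--based globalization. Introduce the gauge $v(t):=\e^{\frac{it}{2\pi^2}\|\phi^\omega\|_{L^2}^2}u(t)$, which is well defined $\mu_s$-a.s.\ because $s>2>1$ ensures $\|\phi^\omega\|_{L^2}<\infty$. Then $v$ solves the Wick-renormalized cubic NLS
\[
i\partial_t v+\Delta v=\,:|v|^2 v:,\qquad v|_{t=0}=\phi^\omega,
\]
and the claimed decomposition reduces to showing that $w(t):=v(t)-\e^{it\Delta}\phi^\omega$ lies in $C([-T,T];H^{s_1})$ with at most polynomial-in-$t$ norm, $\mu_s$-almost surely.

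The first step is a local, high-probability version of the smoothing statement: there exist $\tau_0,c,\gamma>0$ such that, for every $R\geq 1$, off an exceptional event of probability $\leq C\e^{-cR^\gamma}$ under $\mu_s$, one has $\|w\|_{L^\infty_{t\in[-\tau_0,\tau_0]}H^{s_1}}\leq F(R)$. This I would obtain by Picard iteration for
\[
w(t)=-i\int_0^t\e^{i(t-t')\Delta}\,:|\e^{it'\Delta}\phi^\omega+w|^2(\e^{it'\Delta}\phi^\omega+w):\,dt'
\]
in a suitable $X^{s_1,b}$-type Bourgain space. Expanding the Wick-ordered trilinear nonlinearity according to the number of random factors, the pieces containing at least one $w$-factor are estimated by deterministic multilinear $X^{s,b}$ inequalities on $\T^2$. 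The delicate term is the purely random one,
\[
\int_0^t\e^{i(t-t')\Delta}\,:|\e^{it'\Delta}\phi^\omega|^2\e^{it'\Delta}\phi^\omega:\,dt',
\]
for which one proves an $H^{s_1}$ bound with Gaussian-type tails via probabilistic multilinear estimates and lattice counting, using Wick cancellation of the resonant interactions whose uncancelled singular behaviour is exactly what Proposition~\ref{prop:lacksmoothing} records. The almost $\tfrac12$-derivative gain matches the constraint $s_1<s-\tfrac12$.

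The second step globalizes via the quantitative quasi-invariance of $\mu_s$ under the flow $\Phi_t$ of the Wick-renormalized equation, which is the main analytic contribution of the paper. Since $s>2$ allows $\sigma<s-1$ to be taken $>1$, Bourgain's deterministic $H^1$-theory makes $\Phi_t$ global on $\supp\mu_s$, and only the $H^{s_1}$ control of $w$ must be propagated. Partition $[-T,T]$ into intervals of length $\tau_0$ and let $A_R$ be the good set from Step~1. For $|t|\leq T$, the event $\{\|w(t)\|_{H^{s_1}}>F(R_n)\}$ on the $n$-th subinterval is contained in $\Phi_{-n\tau_0}(A_{R_n}^c)$ (modulo a free-evolution contribution that preserves $H^{s_1}$ and is handled inductively). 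Quasi-invariance furnishes
\[
\mu_s\bigl(\Phi_{-n\tau_0}(A_{R_n}^c)\bigr)\leq \Psi(n\tau_0)\,\mu_s(A_{R_n}^c)^{1-\delta}
\]
for some $\Psi$ of at most polynomial growth and some $\delta\in(0,1)$. Choosing $R_n=n^A$ with $A$ sufficiently large makes $\sum_n \mu_s\bigl(\Phi_{-n\tau_0}(A_{R_n}^c)\bigr)$ summable, and Borel--Cantelli gives polynomial growth of $\|w(t)\|_{H^{s_1}}$ on a set of full $\mu_s$-measure.

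The main obstacle is the quantitative quasi-invariance itself. It demands constructing a Wick-renormalized modified energy comparable to $\|\cdot\|_{H^s}^2$ whose time derivative along $\Phi_t$ can, via multilinear harmonic analysis on $\T^2$ and suitable integrations by parts, be controlled by quantities with finite $\mu_s$-moments. The $s>2$ threshold reflects the cancellations needed to tame the worst resonant contributions in this energy, and the interplay between the $\tfrac12$-derivative gain of Step~1 and the polynomial density bound from quasi-invariance is precisely what produces the polynomial-in-$t$ control on $\|w(t)\|_{H^{s_1}}$.
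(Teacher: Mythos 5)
Your proposal is correct and follows essentially the same route as the paper: a local-in-time smoothing estimate with exponentially small exceptional sets (the paper's Proposition~\ref{localstructure}, quoted from the literature rather than reproved), transported along the flow via the quantitative quasi-invariance and $L^p$ density bounds of Theorem~\ref{thm:main}, followed by Borel--Cantelli and iteration of the local decomposition using that the free evolution preserves $H^{s_1}$. The only cosmetic differences are that the paper lets the local time step shrink like $R^{-\kappa_0}$ and runs Borel--Cantelli over $R$ with a single threshold per $R$ (rather than a fixed $\tau_0$ and thresholds $R_n=n^A$ varying along the subintervals), and that it carries out the measure estimates on the energy-truncated weighted measures $\rho_{s,\lambda}$ before taking a union over $\lambda$.
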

\begin{rem}
	We note that \eqref{recenter2} is also true for $s=1$ (\cite{Bo96}), thanks to the equivalence between $\mu_1$ and the invariant Gibbs measure. The case $s>1$
is proved \cite{CLSta}, but only for small $T>0$. It is an interesting question to fill the gap $1<s\leq 2$ in Theorem \ref{globalstructure}.
\end{rem}
\begin{rem}
The analogue of Theorem \ref{globalstructure} for NLS with any $m\geq 2$ should also hold under a different gauge with the phase factor $\e^{-\frac{im}{(2\pi)^2}\int_0^t\|u(t')\|_{L^{2m}}^{2m}dt' }$ by our argument. Since the local-in-time statement (analogue of Proposition \ref{localstructure}) does not seem to be written in the literature, we prefer not stating the general case as a theorem.
\end{rem}

\subsection{Quantitative quasi-invariance}
Note that the NLS flow $\Phi_t$ of \eqref{NLS} is a globally defined flow on $H^1(\T^2)$. In particular, when $s>2$, $\Phi_t$ is a well-defined flow on the support of the measure $\mu_s$. The proof of Theorem \ref{globalstructure} is deduced from a local nonlinear smoothing effect (Proposition \ref{localstructure}), and more importantly, the quantitative quasi-invariance property of the Gaussian measure $\mu_s$ transported by the flow $\Phi_t$.

Initiated by the second author in \cite{Tzsigma}, intensive research activities have been conducted for proving quasi-invariance of Gaussian measures transported by flows of dispersive PDEs. Among an exhaustive list of literature, we only mention \cite{BuTho,DeTsu,FoSe,FoTo,GLTz,Kne1,Kne2,Kne3,OhSeo,OhSoTz,OTz4NLS,PTV,SunTzCritical} for NLS type equations. These results are mostly for one-dimensional models except for the previous work \cite{SunTzCritical} of the authors for the 3D energy-critical NLS.

Our second main result in this article is the following \emph{quantitative  quasi-invariance} property of NLS with any integer-valued defocusing nonlinearity:
\begin{thm}\label{thm:main}
	Assume that $s>2$ and $m\in \N$, $m\geq 2$. Then for any $t\in\R$, the transported Gaussian measure $(\Phi_t)_*\mu_s$ is absolutely continuous with respect to $\mu_s$. More precisely, for any $\lambda \geq 1$, there exists a family of weighted measures
	\[
	d\rho_{s,\lambda }(u)=\mathbf{1}_{H[u]\leq \lambda}\cdot \mathrm{e}^{-R_{s}(u)}\,d\mu_s(u)
	\]
	such that
	\begin{itemize}
		\item $\mathbf{1}_{H[u]\leq \lambda}\cdot \mathrm{e}^{| R_{s}(u)|}\in L^p(d\mu_s)$ for any $1\leq p<\infty$ and $\lambda> 0$.
		\item The density $f_{\lambda }(t,\cdot)$ of the transported measure $(\Phi_t)_*d\rho_{s,\lambda }$ belongs to $L^p(d\rho_{s,\lambda })$ for any $1\leq p<\infty$. More precisely, there exist $C(p,\lambda)>0$ and an absolute constant $C_0>0$ such that
		\[
		\|f_{\lambda}(t,\cdot)\|_{L^p(d\rho_{s,\lambda})}\leq C(p,\lambda)\,\mathrm{e}^{\,C(p,\lambda)(1+|t|^{C_0})}.
		\]
	\end{itemize}
\end{thm}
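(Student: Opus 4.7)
The plan is to run a Tzvetkov-type modified-energy plus Galerkin-approximation scheme for quasi-invariance, in the spirit of the authors' 3D critical-NLS work \cite{SunTzCritical}. One constructs a correction $R_s$ so that the modified energy $Q_s(u):=\tfrac12\|u\|_{H^s}^2 + R_s(u)$ is \emph{almost} conserved along $\Phi_t$, with a residual $G_s(u)$ whose exponential is $\mu_s$-integrable on the sublevel set $\{H[u]\leq\lambda\}$; then $d\rho_{s,\lambda}\propto\mathbf{1}_{H\leq\lambda}\e^{-R_s}d\mu_s$ will be quasi-invariant with an explicit density bound.

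Let $\pi_N$ be the spectral projector to frequencies $|k|\leq N$, and let $\Phi_{N,t}$ be the truncated NLS flow solving $i\partial_t u+\Delta u=\pi_N(|\pi_N u|^{2m-2}\pi_N u)$. Under $\Phi_{N,t}$ the high-frequency part evolves linearly, so $\mu_s^{>N}$ is invariant and the analysis reduces to a finite-dimensional Hamiltonian system on $\pi_N L^2$. Differentiating $\|\pi_N\Phi_{N,t}u\|_{H^s}^2$ in $t$ gives a $(2m)$-linear sum over tuples $(k_1,\ldots,k_{2m})$ with $\sum_{j}(-1)^j k_j=0$, weighted by $\la k_{2m}\ra^{2s}$ and carrying the resonance phase $\Phi(\vec k)=\sum_{j}(-1)^j|k_j|^2$. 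On the non-resonant set $\{\Phi\neq 0\}$ I integrate by parts in time via $\e^{it\Phi}=\tfrac{1}{i\Phi}\partial_t\e^{it\Phi}$; the boundary term defines $R_s^N(u)$, and the surviving time-derivative is the residual $G_s^N(u)$, in which the factor $1/\Phi$ compensates part of the $\la k_{2m}\ra^{2s}$ weight. Trivially resonant tuples (where $\Phi=0$ forces $k$-pairing) are absorbed by Wick renormalisation of the nonlinearity and contribute no derivative loss.

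The identity $\tfrac{d}{dt}Q_s^N(\Phi_{N,t}u)=G_s^N(\Phi_{N,t}u)$ together with $d\mu_s\propto\e^{-\|u\|_{H^s}^2/2}du$ gives, after change of variables, the representation $f_{\lambda,N}(t,u)=\exp\!\big(\int_0^t G_s^N(\Phi_{N,\tau-t}u)\,d\tau\big)$ times an energy-cutoff ratio for the density of $(\Phi_{N,t})_*\rho_{s,\lambda,N}$ with respect to $\rho_{s,\lambda,N}$. A Gronwall--Hölder argument then reduces the desired $L^p(\rho_{s,\lambda,N})$ bound on $f_{\lambda,N}$ to the uniform-in-$N$ estimate
\[
\|\mathbf{1}_{H\leq\lambda}\,\e^{c|G_s^N|}\|_{L^q(\mu_s)}\leq C(q,\lambda),
\]
which I would prove by Wiener-chaos tail bounds on $G_s^N$ using the random Fourier series for $u$, the deterministic $H^1$-bound supplied by the cutoff $\{H\leq\lambda\}$, and the smoothing from the integration by parts. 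A Cauchy-in-$N$ estimate on $R_s^N$ in the same weighted norm then defines the limit $R_s$, and $\Phi_{N,t}u\to\Phi_t u$ $\mu_s$-a.s.\ (from global well-posedness in $H^1$) lets one pass to the limit.

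The main obstacle is the uniform-in-$N$ smoothing estimate on $G_s^N$. On $\T^2$ the resonance phase $\Phi$ does not admit clean diophantine lower bounds like $|\Phi|\gtrsim\max_j\la k_j\ra^{2}$ available on $\T$, so a single integration by parts cannot recover all $2s$ derivatives deterministically. One must therefore combine the arithmetic geometry of the non-resonant set (high-high-high vs.\ high-high-low configurations, Bourgain's $L^4$ Strichartz) with random-data multilinear cancellations on generic tuples, the $H^1$ bound from $\{H\leq\lambda\}$, and possibly a second normal-form step for borderline configurations. Executing this deterministic/probabilistic bookkeeping uniformly in $N$, and doing so for every integer $m\geq 2$, is the technical heart of the argument.
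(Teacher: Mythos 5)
Your overall architecture coincides with the paper's: normal form in time to produce the correction $R_s$ and the residual (the paper's $Q_N$, your $G_s^N$), truncated Hamiltonian flows $\Phi_t^N$ with invariance of the high-frequency Gaussian, the weighted measure $\mathbf{1}_{H\le\lambda}\e^{-R_s}d\mu_s$, uniform-in-$N$ moment bounds, and a limit $N\to\infty$. Your reduction step differs mildly: you want exponential integrability of $G_s^N$ and an explicit exponential formula for the density, whereas the paper runs the differential inequality $\frac{d}{dt}\rho_{s,\lambda,N}((\Phi_t^N)^{-1}A)\le \|Q_N\|_{L^p}\,\rho_{s,\lambda,N}((\Phi_t^N)^{-1}A)^{1-1/p}$ and feeds the moment bound $\|\mathbf{1}_{H_N\le\lambda}Q_N\|_{L^p}\lesssim p^{1-\epsilon_0}$ into an abstract Yudovich-type lemma (Lemma \ref{abstract}) to get the quantitative growth $\exp(C(1+|t|^{C_0}))$. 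Both reductions are standard and rest on the same input; note however that the strictly sublinear exponent $p^{1-\epsilon_0}$ (not just $p$) is what makes the quantitative conclusion work, and your proposal never specifies what $p$-growth you need or can achieve.

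The genuine gap is at what you yourself call the technical heart. Your plan relies on ``random-data multilinear cancellations on generic tuples,'' but the dangerous contribution is precisely non-generic: after expanding $\frac{d}{dt}E_{s,N}$ one obtains scale-$2$ tree sums in which a highest-frequency leaf of the root generation is \emph{paired} with a highest-frequency leaf of the second generation (the cross-pairing terms $\mathcal{S}_{\fk{l}',\fk{l}''}$ of \eqref{ST}). For such paired configurations the Wiener-chaos square-root cancellation is unavailable ($|g_k|^2$ replaces a product of independent Gaussians), the deterministic bound does not decay in the top dyadic frequency, and the dyadic sum does not close. The paper's resolution is a structural cancellation: only the \emph{imaginary part} of $\mathcal{S}_{\fk{l}',\fk{l}''}$ enters the energy derivative, and after subtracting the leading symbol $\psi_{2s}^0/\Omega^0$ the most singular piece is manifestly real and drops out (Lemma \ref{keycancellation}); the remainder $\Psi(\vec{k}_{\fk{r}})$ is then small enough (Lemma \ref{boundPsi}) to be estimated deterministically. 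Nothing in your proposal supplies this step, and a ``second normal-form step'' will not remove it. A secondary inaccuracy: on $\T^2$ the resonant set $\{\Omega(\vec k)=0\}$ is \emph{not} exhausted by paired tuples (rectangles give nontrivial resonances), so the fully resonant term is not absorbed by Wick renormalisation; it survives as a scale-$1$ tree sum that must be estimated probabilistically, though this term is comparatively benign.
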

\begin{rem}
After this manuscript was essentially completed, we became aware of a work by Tolomeo--Visciglia \cite{ToVi} which obtains a similar result of Theorem \ref{thm:main} by using a different approach, based on  physical space analysis to build modified energies.
\end{rem}
\begin{rem}
	For large values of $s$, the quasi-invariance part of Theorem \ref{thm:main} can be deduced directly from the same argument in \cite{SunTzCritical}.
	The statements here are quantitative, in the sense that the Radon--Nikodym density of the weighted Gaussian measure (truncated by a conservation law) is in any $L^p(d\mu_s)$ for $p<\infty$. By mimicking Bourgain's measure-invariant argument \cite{Bo96}, such a property can be applied to obtain various dynamical consequences of the solutions. See \cite{FoTo,Kne3} about the application of the quantitative quasi-invariance to globalize solutions in the low-regularity setting.
\end{rem}
\begin{rem}
	Even for large value of $s$, the quantitative quasi-invariance is more difficult to obtain, compared to the qualitative quasi-invariance property. From the current technology, for proving the quantitative quasi-invariance, we have to truncate the measure with some conservation law, while we can truncate the stronger well-posedness norms for proving qualitative quasi-invariance.
\end{rem}

\begin{rem}
Let us also mention two other applications in the literature. In the recent work \cite{BLTV}, the quantitative quasi-invariance of Gaussian measures was used 
in order to prove low regularity well-posedness results for the binormal flow. In \cite{BuTho}, such an idea was used to prove the almost sure scattering of the NLS on $\R$.
\end{rem}

Our proof of Theorem \ref{thm:main} relies on a methodology similar to the one developed in \cite{SunTzCritical}. As we are dealing with solutions with relatively low regularity (though still above $H^1$), more attention is needed to control the interaction of solutions in different scales.
The constraint $s>2$ here is only to ensure the existence of global flow on the support of $\mu_s$, and the legality of the truncation of the energy.
We believe that our result can be extended to some $s$ slightly smaller than $2$ (depending on the degree of nonlinearity $m$), similar to \cite{Kne3} for the one-dimensional NLS. In another direction, it is likely that Theorem \ref{globalstructure} and Theorem \ref{thm:main} could be extended for the cubic NLS on $\T^3$ by our method.

	\subsection*{Acknowledgments}
	This work is partially supported by the ANR project Smooth ANR-22CE40-0017.
	The authors would like to thank Yuxuan Chen and Shengquan Xiang for interesting discussion which motivates the main result, Theorem \ref{globalstructure}.   
	The authors would like also to thank Yu Deng for conversation at an early stage of the study for the quasi-invariance for multi-dimensional NLS. 



\section{Modified energy and the weighted Gaussian measure}\label{Sec:modifiedenergy}

\subsection{Normal form and the modified energy}
Let $m\in\N$, $m\geq 2$ and $s\geq 2$. To construct appropriate weighted measures, we introduce a modified energy functional. For a smooth solution $u(t)$ of \eqref{NLS}, define the new unknown obtained by factoring out the linear flow:
\[
v(t)=\e^{-it\Delta}u(t).
\]
Expanding $v(t)$ in the Fourier series,
\[
v(t,x)=\sum_{k\in\Z^2} v_k(t)\,\mathrm{e}^{ik\cdot x},
\]
we see that $v_k(t)$ solves
\begin{align}\label{NLSk}
	i\partial_t v_k(t)=\sum_{\substack{k_1-k_2+\cdots+k_{2m-1}=k}}\e^{-it\Omega(\vec{k})}\,v_{k_1}(t)\,\ov{v}_{k_2}(t)\cdots v_{k_{2m-1}}(t),
\end{align}
where
\[
\Omega(\vec{k})=\sum_{j=1}^{2m-1}(-1)^{j-1}|k_j|^2-|k|^2
\]
is the resonant function. 
To construct the modified energy, it is convenient to use the following equivalent Sobolev norm for $s\geq 0$:
\begin{align}\label{def:Hsequiv}
	\vertiii{f}_{H^s(\T^2)}^2:=\sum_{k\in\Z^2}\big(1+|k|^{2s}\big)\,|\widehat{f}(k)|^2. 
\end{align}
A straightforward computation using the symmetry of the indices yields
\begin{align}\label{deriveeHs}
	\frac{1}{2}\frac{d}{dt}\,\vertiii{v(t)}_{H^s}^2
	=-\frac{1}{2m}\Im\!\sum_{\substack{k_1-k_2+\cdots -k_{2m}=0}}\!
	\psi_{2s}(\vec{k})\,\e^{-it\Omega(\vec{k})}\,v_{k_1}\ov{v}_{k_2}\cdots \ov{v}_{k_{2m}},
\end{align}
where (abusing notation slightly)
\[
\psi_{2s}(\vec{k})=\sum_{j=1}^{2m}(-1)^{j-1}|k_j|^{2s},\qquad
\Omega(\vec{k})=\sum_{j=1}^{2m}(-1)^{j-1}|k_j|^2.
\]
We split the right-hand side according to the resonance set $\{\Omega(\vec{k})=0\}$:
\begin{align}
	\frac{1}{2}\frac{d}{dt}\,\vertiii{v(t)}_{H^s}^2
	=&-\frac{1}{2m}\Im\!\sum_{\substack{k_1-k_2+\cdots-k_{2m}=0\\ \Omega(\vec{k})=0}}
	\psi_{2s}(\vec{k})\,\e^{-it\Omega(\vec{k})}\,v_{k_1}\ov{v}_{k_2}\cdots \ov{v}_{k_{2m}}\notag\\
	&-\frac{1}{2m}\Im\!\sum_{\substack{k_1-k_2+\cdots-k_{2m}=0\\ \Omega(\vec{k})\neq 0}}
	\frac{\psi_{2s}(\vec{k})}{-i\,\Omega(\vec{k})}\,\partial_t\!\Big(\e^{-it\Omega(\vec{k})}\,v_{k_1}\ov{v}_{k_2}\cdots\ov{v}_{k_{2m}}\Big)\notag\\
	&+\frac{1}{2m}\Im\!\sum_{\substack{k_1-k_2+\cdots-k_{2m}=0\\ \Omega(\vec{k})\neq 0}}
	\frac{\psi_{2s}(\vec{k})}{-i\,\Omega(\vec{k})}\,\e^{-it\Omega(\vec{k})}\,\partial_t\!\big(v_{k_1}\ov{v}_{k_2}\cdots\ov{v}_{k_{2m}}\big).\label{deriveeHs1}
\end{align}
Motivated by this, define the modified energy
\begin{align}\label{modifiedenergy}
	\mathcal{E}_{s,t}(v):=\frac{1}{2}\|v\|_{H^s(\T^2)}^2+\mathcal{R}_{s,t}(v),
\end{align}
where
\[
\mathcal{R}_{s,t}(v):=\frac{1}{2m}\Im \sum_{\substack{k_1-k_2+\cdots-k_{2m}=0\\ \Omega(\vec{k})\neq 0}}
\frac{\psi_{2s}(\vec{k})}{-i\,\Omega(\vec{k})}\,\e^{-it\Omega(\vec{k})}\,v_{k_1}\ov{v}_{k_2}\cdots \ov{v}_{k_{2m}}.
\]
Returning to $u$, observe that
\begin{equation}\label{Est}
	\mathcal{E}_{s,t}(v)= E_{s}(u):=\frac{1}{2}\,\vertiii{u}_{H^s(\T^2)}^2+R_{s}(u),
\end{equation}
with
\begin{align}\label{Rst}
	R_{s}(u):=\frac{1}{2m}\Im \sum_{\substack{k_1-k_2+\cdots-k_{2m}=0\\ \Omega(\vec{k})\neq 0}}
	\frac{\psi_{2s}(\vec{k})}{-i\,\Omega(\vec{k})}\,\widehat{u}_{k_1}\ov{\widehat{u}}_{k_2}\cdots \ov{\widehat{u}}_{k_{2m}}\,.
\end{align}
Then, from \eqref{deriveeHs1}, \eqref{NLSk} and the symmetry of the indices, we obtain
$$ \frac{d}{dt}\,\mathcal{E}_{s,t}(v)=\frac{d}{dt}E_s(u)=\mathrm{I}+\mathrm{II}+\mathrm{III},
$$
where
\begin{align}\label{dmodifiedenergy}
\mathrm{I}
	:=&-\frac{1}{2m}\Im\!\sum_{\substack{k_1-k_2+\cdots-k_{2m}=0\\ \Omega(\vec{k})=0}}
	\psi_{2s}(\vec{k})\,\e^{-it\Omega(\vec{k})}\,v_{k_1}\ov{v}_{k_2}\cdots \ov{v}_{k_{2m}}\notag,\\
	\mathrm{II}:=&\frac{1}{2}\Im\!\sum_{\substack{k_1-k_2+\cdots-k_{2m}=0\\ \Omega(\vec{k})\neq 0}}
	\frac{\psi_{2s}(\vec{k})}{\Omega(\vec{k})}\sum_{\substack{k_1=p_1-p_2+\cdots+p_{2m-1}}}
	\e^{-it\big(\Omega(\vec{k})+\Omega(\vec{p})\big)}\,v_{p_1}\ov{v}_{p_2}\cdots v_{p_{2m-1}}\ov{v}_{k_2}\cdots \ov{v}_{k_{2m}}\notag,\\
	\mathrm{III}:=&-\frac{1}{2}\Im\!\sum_{\substack{k_1-k_2+\cdots -k_{2m}=0\\ \Omega(\vec{k})\neq 0}}
	\frac{\psi_{2s}(\vec{k})}{\Omega(\vec{k})}\sum_{\substack{k_2=q_1-q_2+\cdots +q_{2m-1}}}
	\e^{-it\big(\Omega(\vec{k})-\Omega(\vec{q})\big)}\,v_{k_1}\ov{v}_{q_1}\cdots \ov{v}_{q_{2m-1}}v_{k_3}\cdots \ov{v}_{k_{2m}},
\end{align}
where
\[
\Omega(\vec{p})=\sum_{j=1}^{2m-1}(-1)^{j-1}|p_j|^2-|k_1|^2,\qquad
\Omega(\vec{q})=\sum_{j=1}^{2m-1}(-1)^{j-1}|q_j|^2-|k_2|^2.
\]

\subsection{The weighted measure}
Using the modified energy, we define, for $\lambda\geq 1$, the weighted Gaussian measure
\begin{align}\label{rhostr}
	d\rho_{s,\lambda }(u):=\mathbf{1}_{\{H[u]\leq \lambda\}}\,\e^{-R_{s}(u)}\,d\mu_s(u).
\end{align}
This can be made rigorous by studying the truncated system
\begin{align}\label{NLS-N}
	\begin{cases}
		&i\partial_tu_N+\Delta u_N=\pi_N\!\big(|\pi_Nu_N|^{2m-2}\,\pi_Nu_N\big),\\
		&u_N|_{t=0}=\phi,
	\end{cases}
\end{align}
where $\widehat{\pi_N f}(k)=\mathbf{1}_{\{|k|\leq N\}}\widehat{f}(k)$. Note that \eqref{NLS-N} is a Hamiltonian system with Hamiltonian
\[
H_N[u]:=\frac{1}{2}\int_{\T^2}|\nabla \pi_Nu|^2\,dx+\frac{1}{2m}\int_{\T^2}|\pi_Nu|^{2m}\,dx.
\]
Denote the flow of \eqref{NLS-N} by $\Phi_t^N$.

Define accordingly the functionals $\mathcal{E}_{s,t,N}$, $E_{s,N}$, $\mathcal{R}_{s,t,N}$, $R_{s,N}$ by replacing $\widehat{u}_k, v_k$ with $\widehat{u}_k\mathbf{1}_{\{|k|\leq N\}}$, $v_k\mathbf{1}_{\{|k|\leq N\}}$ in the formulas of the previous subsection. Let $\mu_{s,N}$ be the Gaussian measure on the finite-dimensional space $\pi_NL^2$ induced by the truncated random Fourier series
\[
\pi_N\phi^{\omega}(x):=\sum_{|k|\leq N}\frac{g_k^{\omega}}{\langle k\rangle^s}\,\e^{ik\cdot x}.
\]
Note that $\mu_s=\mu_{s,N}\otimes \mu_{s,N}^{\perp}$, where $\mu_{s,N}^{\perp}$ is the Gaussian measure on $\pi_N^{\perp}L^2$ induced by $(\mathrm{Id}-\pi_N)\phi^{\omega}(x)$.

\begin{prop}[Construction of the weighted measure]\label{weightedmeasure}
	Let $s>2$ and $\lambda\geq 1$. For any $p\in[1,\infty)$, there exists a constant $C(p,s,\lambda)>0$, independent of $N$, such that for every $N\in\N$,
	\[
	\big\|\mathbf{1}_{\{H_N[u]\leq \lambda\}} \,\e^{|R_{s,N}(u)|}\big\|_{L^p(d\mu_s)}\leq C(p,s,\lambda).
	\]
	Moreover,
	\[
	\lim_{N\to\infty}\Big\|
	\mathbf{1}_{\{H_N[u]\leq \lambda\}}\,\e^{-R_{s,N}(u)}
	-\mathbf{1}_{\{H[u]\leq \lambda\}}\,\e^{-R_{s}(u)}
	\Big\|_{L^p(d\mu_s)}=0.
	\]
\end{prop}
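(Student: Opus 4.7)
The plan is to establish the uniform $L^p$ bound first, and then pass to the limit via Vitali's convergence theorem.

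\textbf{Multilinear bound for $R_{s,N}$.} First I would analyze the multiplier $\psi_{2s}(\vec{k})/\Omega(\vec{k})$ in the defining sum. Sorting $|k_1|\geq|k_2|\geq\cdots\geq|k_{2m}|$ and splitting into the cases $|k_1|\gg|k_2|$ (where $|\Omega(\vec{k})|\gtrsim|k_1|^2$ directly while $|\psi_{2s}(\vec{k})|\lesssim|k_1|^{2s}$) and $|k_1|\sim|k_2|$ (where the factorization $|a|^{2s}-|b|^{2s}=(|a|^2-|b|^2)\,P_{2s-2}(|a|,|b|)$ cancels a small $\Omega$ in the denominator against a corresponding zero of $\psi_{2s}$), one obtains $|\psi_{2s}(\vec k)/\Omega(\vec k)|\lesssim|k_1|^{2s-2}$. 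Combined with Sobolev multiplication estimates on $\T^2$ — crucially using the gap $s>2$, i.e.\ $s-1>1$ — this leads to a pointwise bound of the form
\[
|R_{s,N}(u)|\;\leq\;C\,\big(1+\vertiii{u}_{H^1}\big)^{2m-2}\,\Psi_N(u),
\]
where $\Psi_N(u)$ is a quadratic expression (of low Wiener-chaos degree) in $\pi_N u$.

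\textbf{Exponential integrability on the energy shell.} On $\{H_N[u]\leq\lambda\}$, $\vertiii{u}_{H^1}\leq C\lambda^{1/2}$, so it suffices to prove $\mathbb{E}_{\mu_s}\!\big[\mathbf{1}_{H_N[u]\leq\lambda}\,e^{c|\Psi_N(u)|}\big]<\infty$ uniformly in $N$, for every $c>0$. The standard Nelson/Wiener-chaos estimate gives $\|\Psi_N\|_{L^p(\mu_s)}\lesssim p$, hence exponential integrability only for small coefficients. To reach arbitrary $c$, I would decompose $\Psi_N=\Psi_N^{\leq K}+\Psi_N^{>K}$ at a frequency threshold $K=K(c,\lambda)$: the low-frequency part depends on finitely many Gaussians and, on the energy shell, can be dominated deterministically via Bernstein by a function of $\lambda$ and $K$; the high-frequency part has $L^2(\mu_s)$-norm vanishing as $K\to\infty$ by the chaos bound, so taking $K$ large enough in terms of $c$ yields the desired uniform exponential moment. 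This produces the claimed uniform $L^p$ bound.

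\textbf{$L^p$ convergence.} For $\mu_s$-almost every $u$, the truncated quantities converge: $R_{s,N}(u)\to R_s(u)$ (dominated convergence in the multilinear Fourier sum, whose absolute series converges a.s.\ because $s>2$), and $H_N[u]\to H[u]$ (since $\pi_Nu\to u$ in $H^1$ a.s.\ when $s>2$). The distribution of $H[u]$ under $\mu_s$ has no atoms, so $\mathbf{1}_{H_N[u]\leq\lambda}\to\mathbf{1}_{H[u]\leq\lambda}$ almost surely. The uniform $L^p$ bound established in the previous step then upgrades this almost sure convergence to $L^p(d\mu_s)$ convergence via Vitali's convergence theorem, applied e.g.\ at exponent $p+\varepsilon$ to get uniform integrability in $L^p$.

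\textbf{Main obstacle.} The hardest step is exponential integrability for every coefficient. Chaos/hypercontractivity alone yields only $e^{c|R_{s,N}|^{1/m}}$-integrability, not $e^{c|R_{s,N}|}$. The required step is to trade the genuinely higher-order Gaussian-polynomial part of $R_{s,N}$ for a product $(1+\vertiii{u}_{H^1})^{2m-2}\cdot\Psi_N(u)$ — the first factor being bounded on the energy shell, the second only quadratic — and then to further refine $\Psi_N$ by a high/low frequency cut to push the admissible exponent to infinity. Ensuring that all these decompositions remain uniform in $N$ is the technical heart of the proof.
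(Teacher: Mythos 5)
Your high-level skeleton (a uniform-in-$N$ $L^p$ bound for the exponential on the energy shell, followed by almost sure convergence plus uniform integrability) is the same as the paper's, but the mechanism you propose for the crucial uniform exponential integrability has a genuine gap. The paper's route is to prove the moment bound $\|R_{s,N}(u)\mathbf{1}_{H_N[u]\le\lambda}\|_{L^p(d\mu_s)}\le C p^{1-\epsilon_0}\lambda^{2m}$ (Corollary \ref{finalestimatescale1}), obtained by decomposing the $2m$-linear sum into dyadic blocks, applying the Wiener chaos estimate only to the $\delta$-dominated high-frequency leaves, and interpolating the resulting bound $p^{l/2}N_{\fk{l}_1}^{-c}$ against the deterministic tree estimate on the energy shell; the strictly sublinear growth $p^{1-\epsilon_0}$ then gives $e^{|R_{s,N}|}\in L^r$ for every $r<\infty$ by expanding the exponential. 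Your substitute, the pointwise bound $|R_{s,N}(u)|\le C(1+\vertiii{u}_{H^1})^{2m-2}\Psi_N(u)$ with $\Psi_N$ a quadratic chaos, is not justified, and I do not believe any such $\Psi_N$ exists with exponential moments of every order uniformly in $N$. First, the multiplier bound $|\psi_{2s}(\vec k)/\Omega(\vec k)|\lesssim|k_{(1)}|^{2s-2}$ is too strong: the cancellation only yields $|\psi_{2s}|\lesssim\lambda_1^{2s-2}(|\Omega|+\lambda_3^2)$ (Lemma \ref{cancellationpsi2s}), so a factor $\lambda_3^2$ survives whenever $|\Omega|$ is of order one. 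More importantly, any Cauchy--Schwarz type factorization must place $s-1$ derivatives on each of the two highest-frequency factors, so the natural quadratic candidate is $\Psi_N(u)\sim\|\pi_Nu\|_{H^{s-1}}^2=\sum_{|k|\le N}|g_k|^2\langle k\rangle^{-2}$, which diverges like $\log N$ $\mu_s$-almost surely; its exponential then grows like a power of $N$ and uniformity is lost. Since $H^{s-1}$ is exactly the borderline regularity missed by $\mu_s$-typical data, the probabilistic square-root gain on the top two frequencies --- and hence something like the paper's interpolation scheme --- cannot be bypassed by a deterministic pointwise factorization; your frequency-threshold trick for enlarging the exponential coefficient is fine in principle but is applied to an object that does not exist.

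A secondary but related error occurs in your convergence step: the series defining $R_s(u)$ does \emph{not} converge absolutely $\mu_s$-a.s.\ for $s>2$. Already for $m=2$, the dyadic block with $|k_1|\sim|k_2|\sim M$ and $|k_3|,|k_4|=O(1)$ contributes an amount of order one to the absolutely summed series for each dyadic $M$ (the weight $M^{2s-2}$ exactly cancels $\langle k_1\rangle^{-s}\langle k_2\rangle^{-s}$ against the $O(M^{2})$ lattice points), so the absolute sum diverges almost surely; only the signed sum converges, again by an $L^2$-orthogonality effect. The paper accordingly proves $R_{s,N}\to R_s$ in $L^p(d\mu_s)$ (hence in measure) by applying Lemma \ref{scale1} to the tail blocks with $N_{\fk{l}_1}>N$, rather than by dominated convergence of the Fourier sum. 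The remaining ingredients of your convergence argument ($H_N[u]\to H[u]$ a.s., absence of atoms for the law of $H[u]$, and Vitali) are fine.
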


The proposition defines the convergent sequence of weighted measures
\[
d\rho_{s,\lambda ,N}(u):=\mathbf{1}_{\{H_N[u]\leq \lambda\}}\,\e^{-R_{s,N}(u)}\,d\mu_{s,N}(u)\otimes d\mu_{s,N}^{\perp}(u),
\]
whose \emph{densities} converge $\mu_s$-a.s. and in $L^p(d\mu_s)$ to the density of $\rho_{s,\lambda}$. The weighted measure $\rho_{s,\lambda}$ is equivalent to the truncated Gaussian measure $\mu_{s,\lambda}:=\mathbf{1}_{\{H[u]\leq \lambda\}}\,\mu_s$.

\begin{prop}[Weighted energy estimate]\label{energyestimate}
	Let $s>2$, $\lambda \geq 1$, and $N\in\N\cup\{\infty\}$. Define
	\[
	Q_{N}(u):=\Big(\frac{d}{dt}E_{s,N}\big(\Phi_{t}^N(u)\big) \Big)\Big|_{t=0}.
	\]
	Then there exist constants $C(s,\lambda)>0$ and $\epsilon_0\in(0,1)$, independent of $N$, such that for all $p\in[2,\infty)$,
	\[
	\big\|\mathbf{1}_{\{H_N[u]\leq \lambda\}}\; Q_{N}(u)\big\|_{L^p(d\mu_s)}\leq C(s,\lambda,\epsilon_0)\,p^{1-\epsilon_0}.
	\]
	By Proposition \ref{weightedmeasure} and Cauchy--Schwarz, we also have, for all $p\in[2,\infty)$,
	\[
	\big\|\mathbf{1}_{\{H_N[u]\leq \lambda\}}\; Q_{N}(u)\big\|_{L^p(d\rho_{s,\lambda,N})}\leq C(s,\lambda,\epsilon_0)\,p^{1-\epsilon_0}.
	\]
\end{prop}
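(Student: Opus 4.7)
The plan is to start from the decomposition
\[
Q_N(u)=\mathrm{I}_N(u)+\mathrm{II}_N(u)+\mathrm{III}_N(u),
\]
obtained by setting $t=0$ in \eqref{dmodifiedenergy} with every $v_k$ replaced by $v_k\,\mathbf{1}_{\{|k|\leq N\}}$. Here $\mathrm{I}_N$ is a multilinear Gaussian polynomial of degree $2m$ supported on the resonance $\{\Omega(\vec k)=0\}$, while $\mathrm{II}_N$ and $\mathrm{III}_N$ are of degree $4m-2$, carry the weight $\psi_{2s}(\vec k)/\Omega(\vec k)$, and feature an extra inner momentum constraint on $\vec p$ or $\vec q$.

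First I would localize every frequency variable to a dyadic shell via a Littlewood--Paley decomposition, producing countable sums
\[
\mathrm{I}_N=\sum_{\vec N}\mathrm{I}_N^{\vec N},\qquad \mathrm{II}_N=\sum_{\vec N}\mathrm{II}_N^{\vec N},\qquad \mathrm{III}_N=\sum_{\vec N}\mathrm{III}_N^{\vec N},
\]
indexed by dyadic frequency configurations $\vec N$. For each dyadic block I would prove a deterministic $L^2(d\mu_s)$ bound, valid on the sublevel set $\{H_N[u]\leq\lambda\}$, with a power-type decay $N_{\max}^{-\delta}$ for some $\delta=\delta(s)>0$. For the non-resonant blocks $\mathrm{II}_N^{\vec N}$ and $\mathrm{III}_N^{\vec N}$ the denominator $\Omega(\vec k)^{-1}$ is meant to absorb the top-frequency growth of $\psi_{2s}(\vec k)$, and the inner summations in $\vec p,\vec q$ should be controlled via standard lattice-point counting on $\T^2$ combined with the a priori bound $\|u\|_{H^1}\leq C(\lambda)$ supplied by the indicator. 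For the resonant block $\mathrm{I}_N^{\vec N}$ the constraint $\Omega(\vec k)=0$ together with the momentum relation $k_1-k_2+\cdots-k_{2m}=0$ forces partial pairings among the frequencies, producing exactly the cancellation in $\psi_{2s}(\vec k)$ needed to extract the required decay.

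With the $L^2$ bounds in hand, I would upgrade to $L^p$ via a Wiener chaos argument. Each dyadic block lies in a Wiener chaos of order at most $4m-2$, so Gaussian hypercontractivity gives
\[
\|\mathrm{I}_N^{\vec N}\|_{L^p(d\mu_s)}\leq C p^m\|\mathrm{I}_N^{\vec N}\|_{L^2(d\mu_s)},
\]
and analogously a factor $(p-1)^{2m-1}$ for $\mathrm{II}_N^{\vec N}, \mathrm{III}_N^{\vec N}$. Summing over dyadic scales with a threshold at $N_{\max}\sim p^{c}$ (for a suitable $c>0$), by using hypercontractivity in the low-scale part and the $L^2$ decay $N_{\max}^{-\delta}$ together with Chebyshev in the tail, one balances the chaos growth in $p$ against the frequency gain. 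A refined version of this resummation, which exploits that the dominant chaos component in each block is effectively of order $2$ (the diagonal Wick contractions reducing an order-$2m$ monomial in $g_k$ to an effective quadratic form in the remaining free Gaussians), should yield the claimed bound
\[
\|\mathbf{1}_{\{H_N[u]\leq\lambda\}}\,Q_N(u)\|_{L^p(d\mu_s)}\leq C(s,\lambda)\,p^{1-\epsilon_0},
\]
for some $\epsilon_0\in(0,1)$, with $\epsilon_0$ ultimately coming from the strict inequality $s>2$. The $L^p(d\rho_{s,\lambda,N})$ statement then follows from Cauchy--Schwarz together with Proposition~\ref{weightedmeasure} applied to $\mathrm{e}^{-R_{s,N}}$.

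The hardest step will be the deterministic multilinear bound for $\mathrm{III}_N^{\vec N}$, a form of degree $4m-2$ in which the inner momentum constraint $k_2=q_1-q_2+\cdots+q_{2m-1}$ is coupled with the outer non-resonance $\Omega(\vec k)\neq 0$. Since $\T^2$ affords no bilinear Strichartz gain with a derivative improvement (unlike $\T$), the analysis has to rely on sharp counting of lattice points on level sets of $\Omega$ and on exploiting the constraint $s>2$ to close the loss near the diagonal. Producing a genuine decay $\delta>0$ uniformly in $N$ with constants polynomial in $\lambda$, together with keeping the effective chaos order in each dyadic block low enough to yield $\epsilon_0>0$, is the core technical point on which the whole argument rests.
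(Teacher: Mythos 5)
Your overall architecture (decompose $Q_N$ into the resonant term and the two scale-$2$ multilinear forms, localize dyadically, combine Wiener chaos with a deterministic bound, then trade decay in $N_{\max}$ against chaos growth in $p$ to reach $p^{1-\epsilon_0}$) matches the skeleton of the paper's argument, and your final resummation step is essentially the interpolation the paper performs. However, there is a genuine gap at the core of your plan: the claim that \emph{every} dyadic block of $\mathrm{II}_N$ and $\mathrm{III}_N$ admits an $L^2(d\mu_s)$ bound with decay $N_{\max}^{-\delta}$ on $\{H_N[u]\le\lambda\}$ is false. Consider a decoration in which a top-frequency leaf $\fk{l}'$ of the root generation is paired with a top-frequency leaf $\fk{l}''$ of the second generation (i.e.\ $k_{\fk{l}'}=k_{\fk{l}''}$ with opposite signs) while all remaining frequencies are $O(N^{\theta})$ — the set $\Lambda_{\fk{l}',\fk{l}''}$ of \eqref{Lambda}. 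The pairing produces $|g_{k_{\fk{l}'}}|^2$, which has nonzero mean, so in these two variables the expression sits in the zeroth chaos and hypercontractivity yields no square-root cancellation; moreover the deterministic size of this contribution is of order $N^{2(s-2)+}$ (cf.\ \eqref{I}), which \emph{grows} with $N$ precisely because $s>2$. No blockwise $L^2$ decay is available here, and a Chebyshev/threshold argument in the tail cannot rescue a quantity that diverges with $N$.

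The paper's resolution, absent from your proposal, is that $Q_N$ is an \emph{imaginary part}, and only after taking $\Im$ does this singular cross-paired contribution become small: Lemma \ref{keycancellation} shows that the leading symbol $(\iota_{\fk{n}_0}|k_{\fk{n}_0}|^{2s}+\iota_{\fk{l}'}|k_{\fk{l}'}|^{2s})/(\iota_{\fk{n}_0}|k_{\fk{n}_0}|^{2}+\iota_{\fk{l}'}|k_{\fk{l}'}|^{2})$ generates a manifestly real-valued sum (the square of an inner sum), hence can be subtracted for free, leaving the difference $\Psi$ of \eqref{Psi}, which is estimated in Lemma \ref{boundPsi} and gives genuine decay in $N_{\fk{l}_1}$ by a purely deterministic counting argument (Proposition \ref{singularterm}). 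Without this cancellation (or a substitute for it) your argument cannot close. The rest of your outline — counting on level sets of $\Omega$, Wiener chaos on the unpaired dominated frequencies, and interpolation to lower the power of $p$ — is broadly consistent with the paper's treatment of the remainder via Proposition \ref{crosspaired}, Lemma \ref{lem:wcounting} and Proposition \ref{estimate:RT}, though your heuristic that the ``effective chaos order is $2$'' is not what drives the exponent $1-\epsilon_0$ there.
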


\subsection{Quasi-invariance and $L^p$ bound of the transported density}

As a consequence of Proposition \ref{weightedmeasure}, Proposition \ref{energyestimate} and Lemma \ref{abstract}, we now sketch the proof of Theorem \ref{thm:main}.

Following a standard argument (see the proof of Lemma~3.3 in \cite{SunTzCritical}), for any $N\in\N$ and any measurable $A$,
\[
\frac{d}{dt}\,\rho_{s,\lambda,N}\big((\Phi_t^N)^{-1}(A)\big)
\;\le\; \big\|\mathbf{1}_{\{H_N[u]\leq \lambda\}}\; Q_N(u) \big\|_{L^p(d\rho_{s,\lambda,N})}\,
\big[\rho_{s,\lambda,N}\big((\Phi_t^{N})^{-1}(A)\big) \big]^{\,1-\frac{1}{p}},
\]
for all $p\in[1,\infty)$. By Proposition \ref{energyestimate},
\begin{align}\label{ineq:differential}
	\frac{d}{dt}\,\rho_{s,\lambda,N}\big((\Phi_t^N)^{-1}(A)\big)\leq C(s,\lambda,\epsilon_0)\,p^{1-\epsilon_0}\,
	\big[\rho_{s,\lambda,N}\big((\Phi_t^{N})^{-1}(A)\big) \big]^{\,1-\frac{1}{p}}.
\end{align}
Applying Lemma \ref{abstract}, we obtain, for any fixed $\epsilon_0>0$,
\begin{align}\label{transportrho}
	\rho_{s,\lambda,N}\big((\Phi_t^N)^{-1}(A)\big)\leq C(s,\lambda,\epsilon_0)\,\rho_{s,\lambda,N}(A)^{\,1-\epsilon_0},
\end{align}
and, for any $p\in[1,\infty)$,
\begin{equation}\label{Lp:fNtu}
	\sup_{|t|\le T}\Big\|\frac{d\,(\Phi_{t}^N)_{*}\rho_{s,\lambda,N}}{d\rho_{s,\lambda,N}} \Big\|_{L^p(d\rho_{s,\lambda,N})}
	\ \le\ C(s,\lambda,\epsilon_0,p)\,\exp\!\Big(C(s,\lambda,\epsilon_0,p)\big(1+T^{2/\epsilon_0}\big)\Big).
\end{equation}
To pass to the limit $N\to\infty$, we invoke the standard stability/Cauchy theory for \eqref{NLS} at regularity $s>2$ (deterministic and subcritical), as in \cite{Tzsigma} and related references; we omit the routine details here.


	
	\section{Tree notations and preliminary estimates}

	The proof of Proposition \ref{weightedmeasure} and Proposition \ref{energyestimate} relies on estimates for certain multilinear expressions of Gaussian random variables stemming from the modified energy \eqref{modifiedenergy} and its time derivative \eqref{dmodifiedenergy}. It is convenient to index these expressions using trees, inspired by \cite{DH1}.
	
	\subsection{Simple trees and expanded branching trees}
	
	We first define a \emph{simple tree} $\mathcal{T}$ of size $n(\mathcal{T})=2m_0-1$ for some $m_0\in\mathbb{N}$, with a root $\fk{r}$ and $2m_0-1$ leaves $\mathcal{L}$ as its children. A simple tree looks like $\simpletree$. 
	Label the leaves from left to right by $\fk{n}_j$ and set the signs by $\iota_{\fk{r}}=+1$ and $\iota_{\fk{n}_j}=(-1)^{j-1}$ for $j=1,\dots,2m_0-1$. 
	An assignment $(k_{\fk{n}}:\fk{n}\in\mathcal{T})$ is called a \emph{decoration} (following \cite{DH1}) if it satisfies
	\[
	\sum_{\fk{n}\in\mathcal{L}}\iota_{\fk{n}}k_{\fk{n}}=k_{\fk{r}}.
	\]
	
	Next we define an \emph{expanded branching tree} $\mathcal{T}$: its vertices are \emph{branching nodes} $\mathcal{N}$ (nodes with children) and \emph{leaves} $\mathcal{L}$ (nodes without children), and satisfy:
	\begin{itemize}
		\item There exists a unique root $\fk{r}\in\mathcal{N}$ that has no ancestor, has an odd number of children, and a \emph{partner leaf} $\fk{r}'\in\mathcal{L}$. Any other node $\fk{n}\in\mathcal{N}\setminus\{\fk{r}\}$ has exactly one parent, has an odd number of children, and has no partner leaf.
	\end{itemize}
	
	\begin{center}
		\begin{tikzpicture}[scale=0.5,
			level 1/.style={sibling distance=1cm},
			level 2/.style={sibling distance=1cm}, level distance=1cm]
			\draw (1,0) circle (3.0pt);
			\draw (0,0).. controls (0.3,0.3) and (0.8,0.3) .. (0.95,0);
			\node at (0,0) [left] {$\mathfrak{r}$};
			\node at (1,0) [right] {$\mathfrak{r}'$};
			\coordinate (root) {} [fill] circle (3.0pt)
			child { [fill] circle (3.0pt)
				child { [fill] circle (3.0pt) }
				child { circle (3.0pt) }
				child { [fill] circle (3.0pt) }
			}
			child { circle (3.0pt) }
			child { [fill] circle (3.0pt) }
			child { circle (3.0pt) }
			child { [fill] circle (3.0pt) };
			\node at (root-1)[left] {$\mathfrak{n}_0$};
			\node at (0,-5) {An expanded branching tree of scale 2};
		\end{tikzpicture}
	\end{center}
	
	For an expanded branching tree, we use the following notions:
	\begin{itemize}
		\item The \emph{scale} $s(\mathcal{T})$ is the number of branching nodes $\#\mathcal{N}$. 
		The \emph{size} $n(\mathcal{T})$ is the number of leaves $\#\mathcal{L}$ (the partner leaf $\mathfrak{r}'$ is also a leaf). 
		For $\fk{n}\in\mathcal{N}\setminus\{\fk{r}\}$, denote by $\mathcal{L}_{\fk{n}}$ the children of $\fk{n}$ that are leaves of $\mathcal{T}$. 
		For the root $\fk{r}$, let $\mathcal{L}_{\fk{r}}$ be the set of children of $\fk{r}$ that are leaves, \emph{together with the partner leaf $\fk{r}'$}; then
		\[
		\#\mathcal{L}=\sum_{\fk{n}\in\mathcal{N}}\#\mathcal{L}_{\fk{n}}.
		\]
		The first generation (children of $\fk{r}$ together with the partner leaf) is sometimes labeled $\{\fk{r}_1,\dots,\fk{r}_{2\ell_0}\}$ from left to right, with $\fk{r}_{2\ell_0}=\fk{r}'$. 
		\vspace{0.3cm}
		
		\item For a leaf $\fk{l}$, denote its unique parent node by $\fk{p}(\fk{l})$.
		
		\vspace{0.3cm}
		\item Fix signs $\iota_{\fk{n}}\in\{+1,-1\}$ inductively as follows:
		\begin{itemize}
			\item Set $\iota_{\fk{r}}=+1$ and $\iota_{\fk{r}'}=-1$. 
			If the children of $\fk{r}$ are ordered from left to right as $\fk{n}_1,\dots,\fk{n}_{2\ell_0-1}$, define their signs \emph{alternatingly} by $\iota_{\fk{n}_{2j-1}}=+1$ and $\iota_{\fk{n}_{2j}}=-1$ for $j=1,2,\dots,\ell_0$.
			\item For any non-root node $\fk{n}\in\mathcal{N}$, if its children from left to right are $\fk{n}_1,\dots,\fk{n}_{2\ell-1}$, define their signs \emph{alternating}, starting from $\iota_{\fk{n}_1}=\iota_{\fk{n}}$.
		\end{itemize}
		
		\vspace{0.3cm}
		\item A \emph{decoration} $(k_{\fk{n}}:\fk{n}\in\mathcal{T})$ is an assignment $k_{\fk{n}}\in\mathbb{Z}^2$ for each $\fk{n}\in\mathcal{T}$ satisfying $k_{\fk{r}}=k_{\fk{r}'}$ and, for any $\fk{n}\in\mathcal{N}$ with children $\fk{n}_1,\dots,\fk{n}_{2\ell_0-1}$ (left to right),
		\[
		k_{\fk{n}}=\sum_{j=1}^{2\ell_0-1}\iota_{\fk{n}_j}\,k_{\fk{n}_j}.
		\]
		
		\vspace{0.3cm}
		\item Resonant functions and weights. 
		For the root $\fk{r}$ with first-generation nodes $\fk{r}_1,\dots,\fk{r}_{2\ell_0-1}$ (left to right), define
		\[
		\Omega(\vec{k}_{\fk{r}}):=\sum_{j=1}^{2\ell_0-1}\iota_{\fk{r}_j}|k_{\fk{r}_j}|^2-\iota_{\fk{r}}|k_{\fk{r}}|^2
		=\sum_{\fk{l}\in\mathcal{L}_{\fk{r}}}\iota_{\fk{l}}|k_{\fk{l}}|^{2},
		\]
		\[
		\psi_{2s}(\vec{k}_{\fk{r}}):=\sum_{j=1}^{2\ell_0-1}\iota_{\fk{r}_j}|k_{\fk{r}_j}|^{2s}-\iota_{\fk{r}}|k_{\fk{r}}|^{2s}
		=\sum_{\fk{l}\in\mathcal{L}_{\fk{r}}}\iota_{\fk{l}}|k_{\fk{l}}|^{2s}.
		\]
		For each non-root node $\fk{n}$ with children $\fk{n}_1,\dots,\fk{n}_{2\ell_0-1}$ (left to right), set
		\[
		\Omega(\vec{k}_{\fk{n}}):=\iota_{\fk{n}}\Big(\sum_{j=1}^{2\ell_0-1}\iota_{\fk{n}_j}|k_{\fk{n}_j}|^2-\iota_{\fk{n}}|k_{\fk{n}}|^2\Big)
		=\iota_{\fk{n}}\Big(\sum_{\fk{l}\in\mathcal{L}_{\fk{n}}}\iota_{\fk{l}}|k_{\fk{l}}|^{2}-\iota_{\fk{n}}|k_{\fk{n}}|^2\Big).
		\]
	\end{itemize}
	
	\noindent\emph{Remark.} We do not require each node to have the same number of children, since our algorithm may delete paired leaves to produce a reduced tree.
	
	\medskip
	\noindent\textbf{Quasi-order for leaves.}
	Let $\mathcal{T}$ be a scale-$2$ expanded branching tree with $\mathcal{N}=\{\fk{r},\fk{n}_0\}$. 
	Given dyadic numbers $(N_{\fk{n}}:\fk{n}\in\mathcal{T})$, we say that $\mathcal{T}$ is \emph{adapted} to $(N_{\fk{n}})$ if every decoration $(k_{\fk{n}})$ also satisfies $N_{\fk{n}}\le|k_{\fk{n}}|<2N_{\fk{n}}$ for all $\fk{n}\in\mathcal{T}$. 
	Assume that $\#\mathcal{L}=2m_0$. Denote by $N_{\fk{l}_1}\ge N_{\fk{l}_2}\ge\cdots\ge N_{\fk{l}_{2m_0}}$ a non-increasing rearrangement of the leaves’ dyadic sizes $(N_{\fk{l}}:\fk{l}\in\mathcal{L})$. Note that for adjacent indices $\fk{l}_j,\fk{l}_{j+1}$ it is not necessary that $|k_{\fk{l}_j}|\ge |k_{\fk{l}_{j+1}}|$, but they do satisfy $|k_{\fk{l}_j}|\ge 2\,|k_{\fk{l}_{j+1}}|$.
	
	\noindent\textbf{Example.} If $\mathcal{L}=\{2,3,4,5,6,7\}$ and $N_2\ge N_4\ge N_5\ge N_7\ge N_6\ge N_3$, then 
	\[
	\fk{l}_1=2,\quad \fk{l}_2=4,\quad \fk{l}_3=5,\quad \fk{l}_4=7,\quad \fk{l}_5=6,\quad \fk{l}_6=3.
	\]
	More generally, for a subset $J\subset \mathcal{L}$, write $\fk{l}_1^J,\dots,\fk{l}_{\#J}^J$ for a permutation of indices in $J$ so that
	\[
	N_{\fk{l}_1^J}\ge N_{\fk{l}_2^J}\ge\cdots\ge N_{\fk{l}_{\# J}^J }.
	\]
	
	\medskip
	\noindent\textbf{Pairing.}
	A pairing $\mathcal{P}=(X,Y,\sigma)$ consists of two non-empty subsets $X,Y\subset\mathcal{L}$ with $\#X=\#Y$ and $X\cap Y=\emptyset$, together with a bijection $\sigma:X\to Y$. 
	Giving an expanded branching tree $\mathcal{T}$ with a pairing $\mathcal{P}$ means we pair leaves $(\fk{l},\sigma(\fk{l}))$ for each $\fk{l}\in X$.  
	A decoration $(k_{\fk{n}})$ for a paired tree must additionally satisfy $k_{\fk{l}}=k_{\sigma(\fk{l})}$ and $\iota_{\fk{l}}+\iota_{\sigma(\fk{l})}=0$ for each $\fk{l}\in X$. 
	We sometimes say leaves $(k_{\fk{i}},k_{\fk{j}})$ are paired if $k_{\fk{i}}=k_{\fk{j}}$ and $\iota_{\fk{i}}+\iota_{\fk{j}}=0$. 
	If $\mathcal{T}$ is adapted to the dyadic sizes $(N_{\fk{n}})$, then paired leaves $\fk{j},\sigma(\fk{j})$ impose $N_{\fk{j}}\sim N_{\sigma(\fk{j})}$.
	
	\medskip
	\noindent\textbf{Energy multilinear form.}
	Let $\mathcal{T}$ be an expanded branching tree of scale $s(\mathcal{T})\le 2$ and size $n(\mathcal{T})=2n_0$. 
	For input functions $a^{(1)},\dots,a^{(2n_0)}\in h^\sigma(\mathbb{Z}^2)$, define the \emph{energy multilinear form}
	\begin{align}\label{Te}
		\mathcal{T}_e(a^{(1)},\dots,a^{(2n_0)}) 
		&:= \sum_{(k_{\fk{n}}:\fk{n}\in\mathcal{T})}
		\frac{\psi_{2s}(\vec{k}_{\fk{r}})}{\Omega(\vec{k}_{\fk{r}})}\,\mathbf{1}_{\Omega(\vec{k}_{\fk{r}})\neq 0}
		\prod_{j=1}^{2n_0}\bigl(a^{(\fk{l}_j)}_{k_{\fk{l}_j}}\bigr)^{\iota_{\fk{l}_j}},
		\qquad \text{if } s(\mathcal{T})=2,\\
		\mathcal{T}_e(a^{(1)},\dots,a^{(2n_0)}) 
		&:= \sum_{(k_{\fk{n}}:\fk{n}\in\mathcal{T})}
		\psi_{2s}(\vec{k}_{\fk{r}})\,\mathbf{1}_{\Omega(\vec{k}_{\fk{r}})=0}
		\prod_{j=1}^{2n_0}\bigl(a^{(\fk{l}_j)}_{k_{\fk{l}_j}}\bigr)^{\iota_{\fk{l}_j}},
		\qquad \text{if } s(\mathcal{T})=1.
	\end{align}
	In particular, the last two terms on the right-hand side of \eqref{dmodifiedenergy} are of the form $\Im\,\mathcal{T}_e(v)$, where $\mathcal{T}_e(v):=\mathcal{T}_e(v,\dots,v)$.

		\subsection{Main steps of the modified energy estimate}
	
Now we outline the scheme of the proof for Proposition \ref{energyestimate} for estimating an expanded branching tree $\mathcal{T}$ of scale $2$. Denote $\fk{r}$ the root and $\fk{n}_0$ the other unique non-root branching node. In this subsection, we assume that $\# \mathcal{L}_{\fk{r}}=\#\mathcal{L}_{\fk{n}_0}=n_0=2m-1$. 
	
	Following \cite{SunTzCritical}, for a small parameter $\theta=\frac{1}{50(n_0+1)}$, we define
	$$ \Lambda:=\bigcup_{\substack{\fk{l}'\in\mathcal{L}_{\fk{r}},\fk{l}''\in\mathcal{L}_{\fk{n}_0}\\
			\iota_{\fk{l}'}+\iota_{\fk{l}''}=0 } }\Lambda_{\fk{l}',\fk{l}''},
	$$
	where
	\begin{align}\label{Lambda} 
		\Lambda_{\fk{l}',\fk{l}''}:=\Big\{(k_{\fk{n}}:\fk{n}\in\mathcal{T}):\Omega(\vec{k}_{\fk{r}})\neq 0,\; k_{\fk{l}'}=k_{\fk{l}''},\; \sum_{\fk{l}\in\mathcal{L}_{\fk{r}},\;\fk{l}\neq\fk{l}' }|k_{\fk{l}}|\leq |k_{\fk{n}_0}|^{\theta}+|k_{\fk{l'}}|^{\theta},\; \sum_{\fk{l}\in\mathcal{L}_{\fk{n}_0},\;\fk{l}\neq \fk{l}'';; }|k_{\fk{l}}|\leq |k_{\fk{n}_0}|^{\theta}+|k_{\fk{l}''}|^{\theta}  \Big\},
	\end{align}
	 is the set of decorations with a single high-frequency cross-pairing. 
	We write the cross-pairing singular contribution in $\mathcal{T}$, corresponding to the paired leaves $\fk{l}',\fk{l''}$ as
	\begin{align}\label{ST} 
		\mathcal{S}_{\fk{l}',\fk{l}''}(\mathcal{T})(v):=\sum_{ (k_{\fk{n}})\in \Lambda_{\fk{l}',\fk{l}''}  }
		\frac{\psi_{2s}(\vec{k}_{\fk{r}})}{\Omega(\vec{k}_{\fk{r}})}\prod_{\fk{l}\in\mathcal{L}}v_{k_{\fk{l}}}^{\iota_{\fk{l}}}.
	\end{align}
	It was already observed in \cite{SunTzCritical} that, when estimating each individual term $\mathcal{S}_{\fk{l}',\fk{l}''}(\mathcal{T})(v)$, the paring between two leaves of top frequencies prevents us to gain from the square-root cancellation of independent random variables. The key observation is that when taking the imaginary part, we cancel the most singular contribution in $\mathcal{S}_{\fk{l}',\fk{l}''}(\mathcal{T})(v)$, which allows us to conclude by purely deterministic argument. We remark that the two-dimensional case here is simpler than the three-dimensional case in \cite{SunTzCritical}, where we need more cancellations between different cross-pairing structures.

	Below we summarize the analysis to be done in the following sections:
	\begin{itemize}
		\item In Section \ref{Sec:Basicestimates}, we prove basic estimates associated to some typical tree structures. In particular, we will prove an universal deterministic estimate and a $L^p$-moment estimate for trees with cross-paired leaves. These estimates will be used systematically in the sequel.
		\vspace{0.1cm}
		\item  In Section \ref{singulartree}, we exploit the cancellation to estimate the imaginary of the cross-pairing terms $\Im \mathcal{S}_{\fk{l}',\fk{l}''}(\mathcal{T})(v)$.
		\vspace{0.1cm}
		\item  In Section \ref{nonsingular}, we estimate the remainder :
		\begin{align}\label{RT}  \mathcal{R}(\mathcal{T})(v):= \mathcal{T}_{e}(v)- \sum_{\substack{\fk{l}'\in\mathcal{L}_{\fk{r}},\fk{l}''\in\mathcal{L}_{\fk{n}_0}\\
					\iota_{\fk{l}'}+\iota_{\fk{l}''}=0 } }\mathcal{S}_{\fk{l}',\fk{l}''}(\mathcal{T})(v).
		\end{align}
		To do so, we decompose $v$ dyadically  and proceed as follows. For each dyadic piece, we first cut paired leaves within one generation. Note that this operation preserves the expanded tree structure, we will end up with a tree without paired leaves within the same generation $\mathcal{L}_{\fk{r}}, \mathcal{L}_{\fk{n}_0}$. We further proceed as:
		\begin{itemize}

			\item By definition, if two leaves of top frequencies $\fk{l}_1,\fk{l}_2$ are paired (they are in different generations), then we must have $N_{\fk{l}_3}\gtrsim N_{\fk{l}_1}^{\theta}$, the corresponding terms can be controlled using Proposition \ref{crosspaired} and we get the desired bound $p^{1-}\lambda^{2m}N_{\fk{l}_1}^{0-}$, which is dyadically summable.
			
			\item It remains to deal with the case where all $\delta$-dominated leaves are not paired (see Definition \ref{dominatedleaves}) for some small $\delta>0$ to be specified. We simply apply the Wiener chaos estimates for the first $l$-dominated frequencies together to reduce the estimate to a weighted counting bound that will be proved in Lemma \ref{lem:wcounting} in the next section.
		\end{itemize}
	\end{itemize}

	\section{Preliminaries and Weighted counting bounds}
	
In the sequel,	we denote $\langle x\rangle:=(1+|x|^2)^{1/2}$ (for $x\in\mathbb{R}^n$). 

	Given an expanded branching tree with root $\fk{r}$ and a decoration $(k_{\fk{n}})_{\fk{n}\in\mathcal{T}}$, define
	\begin{align}\label{def:orderfrequency}
		\lambda_l(\vec{k}_{\fk{r}})
		:=\mathop{\mathrm{max}}\limits_{\fk{n}:\text{children of }\fk{r}\text{ or }\fk{r}'}^{\;\;\;\;\;\;\;\;\;\;\;\;(l)}
		\bigl|k_{\fk{n}}\bigr|.
	\end{align}
	That is, $\lambda_l$ is the $l$-th largest magnitude among $\{|k_{\fk{n}}|\}$ where $\fk{n}$ ranges over the children of $\fk{r}$ its partner $\fk{r}'$.
	
	\begin{lem}\label{cancellationpsi2s}
		Let $\mathcal{T}$ be a simple tree with root $\fk{r}$ and $(k_{\fk{n}},\iota_{\fk{n}})_{\fk{n}\in\mathcal{T}}$ an admissible assignment. Then
		\begin{align}\label{boundpsi2s}
			|\psi_{2s}(\vec{k}_{\fk{r}})|
			\lesssim \lambda_1(\vec{k}_{\fk{r}})^{2s-2}\big(|\Omega(\vec{k}_{\fk{r}})|+\lambda_3(\vec{k}_{\fk{r}})^2\big).
		\end{align}
	\end{lem}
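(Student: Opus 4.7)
The plan is to put all $2m_0$ frequencies involved in $\psi_{2s}$ on equal footing by a uniform relabeling, and then reduce the lemma to an elementary algebraic inequality. Set $k_j = k_{\fk{r}_j}$ with sign $\epsilon_j = \iota_{\fk{r}_j}$ for $j = 1,\ldots,2m_0-1$, and $k_{2m_0} = k_\fk{r}$ with $\epsilon_{2m_0} = -\iota_\fk{r}$. Because the $\iota_{\fk{r}_j}$ alternate in sign starting from $+1$ and $\iota_\fk{r} = +1$, the signs are balanced: $\sum_{j=1}^{2m_0}\epsilon_j = 0$ (equivalently, $m_0$ pluses and $m_0$ minuses). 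Moreover, the admissibility $\sum_{j=1}^{2m_0-1}\iota_{\fk{r}_j}k_{\fk{r}_j} = k_\fk{r}$ rewrites as the momentum condition $\sum_{j=1}^{2m_0}\epsilon_j k_j = 0$, while
\[
\psi_{2s}(\vec k_{\fk{r}}) = \sum_{j=1}^{2m_0}\epsilon_j|k_j|^{2s}, \qquad \Omega(\vec k_{\fk{r}}) = \sum_{j=1}^{2m_0}\epsilon_j|k_j|^{2}.
\]
Reorder the magnitudes so that $|k_{(1)}|\geq|k_{(2)}|\geq\cdots$ and write $\lambda_l = |k_{(l)}|$.

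The key is the identity obtained by isolating the top two frequencies in $\Omega$:
\[
\epsilon_{(1)}|k_{(1)}|^2 + \epsilon_{(2)}|k_{(2)}|^2 = \Omega - \sum_{l\geq 3}\epsilon_{(l)}|k_{(l)}|^2,
\]
which immediately yields $\bigl|\epsilon_{(1)}\lambda_1^2 + \epsilon_{(2)}\lambda_2^2\bigr|\lesssim |\Omega| + \lambda_3^2$. Now split into two cases based on the signs of the top two terms. In the \emph{opposite-sign} case $\epsilon_{(1)} = -\epsilon_{(2)}$, this reads $|\lambda_1^2-\lambda_2^2|\lesssim |\Omega|+\lambda_3^2$; applying the mean value theorem to $y\mapsto y^s$ on $[\lambda_2^2,\lambda_1^2]$ gives
\[
\bigl|\lambda_1^{2s}-\lambda_2^{2s}\bigr| \leq s\,\lambda_1^{2s-2}\bigl(\lambda_1^2-\lambda_2^2\bigr) \lesssim \lambda_1^{2s-2}\bigl(|\Omega|+\lambda_3^2\bigr).
\]
The remaining $2m_0-2$ terms of $\psi_{2s}$ are trivially absorbed by $\lambda_3^{2s}\leq \lambda_1^{2s-2}\lambda_3^2$, which is valid since $s\geq 1$ in our setting. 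In the \emph{same-sign} case $\epsilon_{(1)} = \epsilon_{(2)}$, the identity directly yields $\lambda_1^2 \leq \lambda_1^2+\lambda_2^2 \lesssim |\Omega|+\lambda_3^2$, whence the brute-force bound
\[
|\psi_{2s}(\vec k_{\fk{r}})| \leq \sum_{j=1}^{2m_0}|k_j|^{2s} \lesssim \lambda_1^{2s-2}\cdot\lambda_1^{2} \lesssim \lambda_1^{2s-2}\bigl(|\Omega|+\lambda_3^2\bigr)
\]
suffices without any need for cancellation.

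The argument is entirely elementary: it uses only the balanced-sign structure of the simple-tree admissibility condition, the decomposition of $\Omega$, and a single application of the mean value theorem. I do not expect any real obstacle. The only subtle point is the dichotomy between the two sign-configurations of the top two frequencies, which is handled by two genuinely different mechanisms (mean value estimate versus direct control of $\lambda_1$ by $\sqrt{|\Omega|}$); both lead to the same final bound $\lambda_1^{2s-2}(|\Omega|+\lambda_3^2)$ claimed in the lemma.
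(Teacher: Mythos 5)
Your proof is correct and follows essentially the same route as the paper: a mean value theorem estimate on $y\mapsto y^{s}$ applied to the two largest frequencies when they carry opposite signs, and the trivial bound $|\psi_{2s}|\lesssim\lambda_1^{2s}$ combined with $\lambda_1^2\lesssim|\Omega|+\lambda_3^2$ when they carry the same sign. The only (harmless) difference is that your dichotomy is keyed to the signs of exactly the first and second largest frequencies, whereas the paper's is keyed to whether some pair of frequencies comparable to $\lambda_1$ has opposite signs; both close the argument identically.
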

	
	Since $|\Omega(\vec{k}_{\fk{r}})|\lesssim \lambda_1(\vec{k}_{\fk{r}})\lambda_3(\vec{k}_{\fk{r}})$, it follows that
	\begin{align}\label{weight}
		|\psi_{2s}(\vec{k}_{\fk{r}})|\lesssim
		\begin{cases}
			\lambda_1(\vec{k}_{\fk{r}})^{2s-2}\big(|\Omega(\vec{k}_{\fk{r}})|+\lambda_3(\vec{k}_{\fk{r}})^2\big), & \text{if } |\Omega(\vec{k}_{\fk{r}})|\lesssim \lambda_1(\vec{k}_{\fk{r}})\lambda_3(\vec{k}_{\fk{r}}),\\[2mm]
			\lambda_1(\vec{k}_{\fk{r}})^{2s}, & \text{if } |\Omega(\vec{k}_{\fk{r}})|\sim \lambda_1(\vec{k}_{\fk{r}})^2.
		\end{cases}
	\end{align}
	
	\begin{proof}[Proof of Lemma \ref{cancellationpsi2s}]
		By admissibility, there exist $\fk{n},\fk{n}'\in\mathcal{T}$ with $|k_{\fk{n}}|\sim |k_{\fk{n}'}|\sim \lambda_1(\vec{k}_{\fk{r}})$. 
		If $\iota_{\fk{n}}+\iota_{\fk{n}'}=0$, then by the mean value theorem for $y\mapsto y^{s}$ on $\mathbb{R}_+$,
		\begin{align*}
			\big||k_{\fk{n}}|^{2s}-|k_{\fk{n}'}|^{2s}\big|
			&\sim \lambda_1(\vec{k}_{\fk{r}})^{2(s-1)}\big|\iota_{\fk{n}}|k_{\fk{n}}|^2+\iota_{\fk{n}'}|k_{\fk{n}'}|^2\big| \\
			&\le \lambda_1(\vec{k}_{\fk{r}})^{2(s-1)}\Big(|\Omega(\vec{k}_{\fk{r}})|+\Big|\sum_{\fk{m}\neq \fk{n},\fk{n}'}\iota_{\fk{m}}|k_{\fk{m}}|^2\Big|\Big)\\
			&\lesssim \lambda_1(\vec{k}_{\fk{r}})^{2(s-1)}\big(|\Omega(\vec{k}_{\fk{r}})|+\lambda_3(\vec{k}_{\fk{r}})^2\big).
		\end{align*}
		Therefore,
		\[
		|\psi_{2s}(\vec{k}_{\fk{r}})|
		\lesssim \lambda_3(\vec{k}_{\fk{r}})^{2s}
		+ \lambda_1(\vec{k}_{\fk{r}})^{2(s-1)}\big(|\Omega(\vec{k}_{\fk{r}})|+\lambda_3(\vec{k}_{\fk{r}})^2\big)
		\lesssim \lambda_1(\vec{k}_{\fk{r}})^{2(s-1)}\big(|\Omega(\vec{k}_{\fk{r}})|+\lambda_3(\vec{k}_{\fk{r}})^2\big).
		\]
		If all indices with $|k_{\fk{n}}|\sim \lambda_1(\vec{k}_{\fk{r}})$ carry the same sign, then $|\Omega(\vec{k}_{\fk{r}})|\sim \lambda_1(\vec{k}_{\fk{r}})^2$ and the trivial bound $|\psi_{2s}(\vec{k}_{\fk{r}})|\lesssim \lambda_1(\vec{k}_{\fk{r}})^{2s}$ also implies \eqref{boundpsi2s}.
	\end{proof}
	
	We use the following vector counting lemma (cf. Lemma 4.3 of \cite{DNY1}).
	
	\begin{lem}[Counting estimate]\label{threevectorcounting}
		Assume $n\ge 2$ and given dyadic numbers $L_1\ge L_2\ge\cdots\ge L_n$. 
		For $\mathbf{a}\in\mathbb{Z}^2$ and $\kappa\in\mathbb{R}$, set
		\[
		K_{L_1,\dots,L_n}(\mathbf{a},\kappa)
		:= \sum_{\substack{k_{\fk{l}_1},\dots,k_{\fk{l}_n} \\ \text{no pairing}}}
		\mathbf{1}_{\substack{\iota_{\fk{l}_1}k_{\fk{l}_1}+\cdots+\iota_{\fk{l}_n}k_{\fk{l}_n}=\mathbf{a}\\
				\iota_{\fk{l}_1}|k_{\fk{l}_1}|^2+\cdots+\iota_{\fk{l}_n}|k_{\fk{l}_n}|^2=\kappa}}
		\prod_{j=1}^n \mathbf{1}_{|k_{\fk{l}_j}|\sim L_j}.
		\]
		Then for any $\varepsilon>0$,
		\[
\sup_{(\mathbf{a},\kappa)\in\mathbb{Z}^2\times\R}		K_{L_1,\dots,L_n}(\mathbf{a},\kappa)\lesssim_{\varepsilon}
		\begin{cases}
			L_2, & n=2,\\
			L_2^{1+\varepsilon}L_3, & n=3,\\
			L_2^{1+\varepsilon}L_3\,(L_4\cdots L_n)^{2}, & n\ge 4.
		\end{cases}
		\]
	\end{lem}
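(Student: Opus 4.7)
The plan is to induct on $n$ with base cases $n=2$ and $n=3$, and reduce $n\ge 4$ to $n-1$ by summing freely over the smallest leaf.

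For $n=2$ I split by the sign pattern $(\iota_{\fk{l}_1},\iota_{\fk{l}_2})$. If the signs agree, the identity $|k_{\fk{l}_1}-k_{\fk{l}_2}|^2 = 2\kappa-|\mathbf{a}|^2$ forces $k_{\fk{l}_1}-k_{\fk{l}_2}$ onto a fixed circle, so the classical divisor bound $r_2(R^2)\lesssim_\varepsilon R^\varepsilon$ combined with an arc-length bound in a ball of size $L_2$ gives $\lesssim\min(L_2, L_1^\varepsilon)$ solutions. If the signs are opposite, the no-pairing hypothesis forces $\mathbf{a}\ne 0$, and $\mathbf{a}\cdot(k_{\fk{l}_1}+k_{\fk{l}_2})=\kappa$ confines $(k_{\fk{l}_1}+k_{\fk{l}_2})$ to a line in $\mathbb{Z}^2$ with spacing $|\mathbf{a}|/\gcd(\mathbf{a})$, giving the refined count $\lesssim L_2\gcd(\mathbf{a})/|\mathbf{a}|+1\lesssim L_2$.

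For $n=3$ I fix the smallest leaf $k_{\fk{l}_3}$ in $O(L_3^2)$ ways and apply the $n=2$ estimate to the remaining pair with $\mathbf{a}'=\mathbf{a}-\iota_{\fk{l}_3}k_{\fk{l}_3}$ and $\kappa'=\kappa-\iota_{\fk{l}_3}|k_{\fk{l}_3}|^2$. If the two remaining signs agree, the bound $\min(L_2, L_1^\varepsilon)$ per $k_{\fk{l}_3}$ sums to $\lesssim L_3^2\cdot L_2^\varepsilon \le L_2^{1+\varepsilon}L_3$ (exploiting $L_1\sim L_2$ whenever $L_1^\varepsilon$ is the smaller of the two, or an arc-length refinement in the regime $L_1\gg L_2$). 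The delicate subcase is opposite signs, where the crude $L_3^2\cdot L_2$ is too weak by a factor of $L_2^{1-\varepsilon}$. I instead use the gcd-refined line count and the number-theoretic average
\[
\sum_{|\mathbf{b}|\lesssim L_3}\frac{\gcd(\mathbf{a}+\mathbf{b})}{|\mathbf{a}+\mathbf{b}|}\;\lesssim\;L_3\log(2+L_3),
\]
proved by grouping $\mathbf{c}=\mathbf{a}+\mathbf{b}$ by $d=\gcd(\mathbf{c})$ and using $\#\{\mathbf{c}:\gcd(\mathbf{c})=d,\,|\mathbf{c}|\lesssim R\}\lesssim (R/d)^2$; in the regime $|\mathbf{a}|\gg L_3$ the denominator $|\mathbf{a}+\mathbf{b}|\sim|\mathbf{a}|$ saves a full power, reducing the sum to $\lesssim L_3^2/|\mathbf{a}|\lesssim L_3$. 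Multiplying by $L_2$ and absorbing the logarithm into $L_2^\varepsilon$ yields the claim.

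For $n\ge 4$ the induction is transparent: fix $k_{\fk{l}_n}$ trivially in $O(L_n^2)$ ways and apply the $(n-1)$-case bound to the remaining vectors with updated parameters, noting that the no-pairing condition is inherited. The output is $L_n^2\cdot L_2^{1+\varepsilon}L_3\prod_{j=4}^{n-1}L_j^2 = L_2^{1+\varepsilon}L_3\prod_{j=4}^{n}L_j^2$. The principal obstacle is the opposite-sign subcase of $n=3$: the $L_2^\varepsilon$ gain over the naive $L_2$ bound comes solely from averaging $\gcd(\mathbf{c})/|\mathbf{c}|$ over lattice translates, and this must be implemented separately in the regimes $|\mathbf{a}|\ll L_3$ (where the $\log L_3$ is absorbed into $L_2^\varepsilon$) and $|\mathbf{a}|\gg L_3$ (where the $1/|\mathbf{a}|$ factor removes the logarithm altogether).
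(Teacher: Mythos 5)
Your base case $n=2$ and the reduction from $n\ge 4$ to $n=3$ by summing the smallest leaves trivially are both sound, and this is the natural way to organize the count (note the paper does not prove this lemma itself but quotes it from Deng--Nahmod--Yue, so there is no internal proof to compare against). The genuine gap is in the $n=3$ case, which you correctly identify as the crux, and it affects both of your subcases in the regime where $L_1$ is super\-/polynomially larger than $L_2$ --- a regime the lemma must cover, since the implied constant depends only on $\varepsilon$. In the same-sign subcase, after fixing $k_{\fk{l}_3}$ the pair $(k_{\fk{l}_1},k_{\fk{l}_2})$ lies on a circle of radius $|k_{\fk{l}_1}-k_{\fk{l}_2}|\sim L_1$, so the divisor bound costs $L_1^{\varepsilon}$ rather than $L_2^{\varepsilon}$, while the $1$-separation arc bound only gives $O(L_2)$ points. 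Take $L_2=L_3=M$ and $L_1=2^{M}$: your bound is $L_3^2\min(L_2,L_1^{\varepsilon})=M^3$ against the target $M^{2+\varepsilon}$. Neither of your proposed fixes closes this: $L_1^{\varepsilon}\le L_2$ does not imply $L_1\sim L_2$, and no arc-length refinement beyond $1$-separation is actually stated. What is needed is either the Jarn\'ik/Cilleruelo--C\'ordoba input that an arc of length $R^{1/3}$ on a circle of radius $R$ carries $O(1)$ lattice points (giving $O(L_2L_1^{-1/3}+1)$ on the relevant arc, which together with the divisor bound in the complementary regime $L_1\le L_3^{3}$ finishes the case), or a reorganization that eliminates the largest frequency $k_{\fk{l}_1}$ via the linear relation and counts $(k_{\fk{l}_2},k_{\fk{l}_3})$ jointly on a circle or line of radius $O(L_2)$, so that all $\varepsilon$-losses land on $L_2$.

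The opposite-sign subcase has a second, independent problem: the averaged gcd bound $\sum_{|\mathbf{b}|\lesssim L_3}\gcd(\mathbf{a}+\mathbf{b})/|\mathbf{a}+\mathbf{b}|\lesssim L_3\log(2+L_3)$ is only justified by your argument when $|\mathbf{a}|\lesssim L_3$, where the translated ball sits inside $B(0,CL_3)$ and grouping by $d=\gcd$ applies. For $|\mathbf{a}|\gg L_3$ the step ``$\lesssim L_3^2/|\mathbf{a}|$'' presumes $\sum_{\mathbf{c}\in B(\mathbf{a},CL_3)}\gcd(\mathbf{c})\lesssim L_3^2$, which is false: for $\mathbf{a}=(A,0)$ the $\sim L_3$ points $(c_1,0)$ in the ball have $\gcd(\mathbf{c})=|c_1|\sim A$, so that sum is $\gtrsim AL_3$. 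A correct off-center estimate (obtained by counting primitive directions in thin sectors) produces logarithms of $|\mathbf{a}|\lesssim L_1$ rather than of $L_3$, and these again cannot be absorbed into $L_2^{\varepsilon}$ when, say, $L_1\ge e^{L_2}$. So the $n=3$ step must be redone along the lines indicated above before the lemma can be considered proved.
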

	
	As a consequence of Lemma \ref{cancellationpsi2s} and Lemma \ref{threevectorcounting}, we have:
	
	\begin{cor}\label{frequentterm}
		Let $\mathcal{T}$ be a scale-$1$ expanded branching tree of size $n(\mathcal{T})=2m_0\ge4$, with root $\fk{r}$ and partner $\fk{r}'$, adapted to dyadic scales $(N_{\fk{n}})$. 
		Let $\fk{l}_1,\dots,\fk{l}_{2m_0}$ be the quasi-order of leaves so that $N_{\fk{l}_1}\ge\cdots\ge N_{\fk{l}_{2m_0}}$, and assume there is no pairing among the leaves. 
		 Then, for any $\varepsilon>0$,
		\begin{align}
			& \sum_{(k_{\fk{n}}:\fk{n}\in\mathcal{T}) } \Big|\frac{\psi_{2s}(\vec{k}_{\fk{r}})}{\langle\Omega(\vec{k}_{\fk{r}})\rangle} \Big|
			\lesssim_{\varepsilon}\ 
			N_{\fk{l}_1}^{2(s-1)}\Big(N_{\fk{l}_2}^2N_{\fk{l}_3}^2+N_{\fk{l}_3}^3N_{\fk{l}_2}^{1+\varepsilon}\Big)\prod_{j=4}^{2m_0}N_{\fk{l}_j}^2,
			\label{l1cor3.3}\\
			& \Big(\sum_{(k_{\fk{n}}:\fk{n}\in\mathcal{T})} \Big|\frac{\psi_{2s}(\vec{k}_{\fk{r}})}{\langle\Omega(\vec{k}_{\fk{r}})\rangle} \Big|^2\Big)^{\frac{1}{2}}
			\lesssim_{\varepsilon}\ 
			N_{\fk{l}_1}^{2(s-1)}\Big(N_{\fk{l}_2}N_{\fk{l}_3}+N_{\fk{l}_3}^{\frac{5}{2}}N_{\fk{l}_2}^{\frac{1}{2}+\varepsilon}\Big)\prod_{j=4}^{2m_0}N_{\fk{l}_j}.
			\label{l2cor3.3}
		\end{align}
	\end{cor}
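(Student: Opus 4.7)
The plan is to reduce both inequalities to a combination of the weight estimate of Lemma \ref{cancellationpsi2s} and the counting estimate of Lemma \ref{threevectorcounting}. First I would observe that for a scale-$1$ expanded branching tree all $2m_0$ leaves sit in $\mathcal{L}_{\fk{r}}$ (counting the partner $\fk{r}'$), and the decoration constraints reduce to the single vector equation $\sum_{\fk{l}\in\mathcal{L}}\iota_{\fk{l}}k_{\fk{l}}=0$, with $\Omega(\vec{k}_{\fk{r}})=\sum_{\fk{l}\in\mathcal{L}}\iota_{\fk{l}}|k_{\fk{l}}|^2$. Since the tree is adapted to $(N_{\fk{n}})$, we have $\lambda_1(\vec{k}_{\fk{r}})\sim N_{\fk{l}_1}$ and $\lambda_3(\vec{k}_{\fk{r}})\sim N_{\fk{l}_3}$, and the two cases of \eqref{weight} give the uniform bound
\[
\Big|\frac{\psi_{2s}(\vec{k}_{\fk{r}})}{\langle\Omega(\vec{k}_{\fk{r}})\rangle}\Big|\lesssim N_{\fk{l}_1}^{2(s-1)}\Big(1+\frac{N_{\fk{l}_3}^2}{\langle\Omega(\vec{k}_{\fk{r}})\rangle}\Big);
\]
indeed, in the regime $|\Omega|\sim N_{\fk{l}_1}^2$ the second term absorbs the bound $\lambda_1^{2s}/\langle\Omega\rangle$, and in the regime $|\Omega|\lesssim N_{\fk{l}_1}N_{\fk{l}_3}$ both terms appear naturally.

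Next, for the $L^1$ bound \eqref{l1cor3.3}, I would split the sum into a $1$-piece and a $N_{\fk{l}_3}^2/\langle\Omega\rangle$-piece. The crude piece is handled by the raw counting bound: using the vector constraint to solve for the largest-frequency leaf $k_{\fk{l}_1}$, the remaining $2m_0-1$ leaves contribute at most $\prod_{j=2}^{2m_0}N_{\fk{l}_j}^2=N_{\fk{l}_2}^2N_{\fk{l}_3}^2\prod_{j=4}^{2m_0}N_{\fk{l}_j}^2$ decorations. For the weighted piece I would fix the value of $\Omega(\vec{k}_{\fk{r}})=\kappa\in\mathbb{Z}$ and apply Lemma \ref{threevectorcounting} with $\mathbf{a}=0$ to obtain, per level set, at most $N_{\fk{l}_2}^{1+\varepsilon}N_{\fk{l}_3}\prod_{j=4}^{2m_0}N_{\fk{l}_j}^2$ decorations. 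Summing $1/\langle\kappa\rangle$ over $|\kappa|\lesssim N_{\fk{l}_1}^2$ costs only a $\log$ factor, which is absorbed into $N_{\fk{l}_2}^{\varepsilon}$ after slightly enlarging $\varepsilon$. Adding the two contributions yields \eqref{l1cor3.3}.

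For the $L^2$ bound \eqref{l2cor3.3}, I would proceed identically but square the bracket $(1+N_{\fk{l}_3}^2/\langle\Omega\rangle)^2\lesssim 1+N_{\fk{l}_3}^4/\langle\Omega\rangle^2$. The $1$-term gives $\big(\prod_{j=2}^{2m_0}N_{\fk{l}_j}^2\big)^{1/2}=N_{\fk{l}_2}N_{\fk{l}_3}\prod_{j=4}^{2m_0}N_{\fk{l}_j}$ after taking the square root. The $N_{\fk{l}_3}^4/\langle\Omega\rangle^2$ term is again treated by level sets of $\Omega$, except that now the summation $\sum_{\kappa\in\mathbb{Z}}\langle\kappa\rangle^{-2}=O(1)$ is unconditionally convergent, with no logarithmic loss. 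Combining with the counting bound $N_{\fk{l}_2}^{1+\varepsilon}N_{\fk{l}_3}\prod_{j=4}^{2m_0}N_{\fk{l}_j}^2$ and taking the square root produces $N_{\fk{l}_3}^{5/2}N_{\fk{l}_2}^{1/2+\varepsilon}\prod_{j=4}^{2m_0}N_{\fk{l}_j}$, which is exactly the second term in \eqref{l2cor3.3}.

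The main obstacle is bookkeeping rather than conceptual: one has to verify that the weight estimate \eqref{weight} really captures every combinatorial configuration of signs among the three largest leaves (the case when the top three carry the same sign forces $|\Omega|\sim N_{\fk{l}_1}^2$, handled by the second branch of \eqref{weight}), and one must be careful that the no-pairing hypothesis is in force so that Lemma \ref{threevectorcounting} applies with exponent $L_2^{1+\varepsilon}L_3(L_4\cdots L_n)^2$ rather than a worse quantity. Once these points are checked, the rest is a clean application of the two cited lemmas.
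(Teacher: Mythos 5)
Your proof is correct and is exactly the argument the paper intends (the corollary is stated there without proof as "a consequence of Lemma \ref{cancellationpsi2s} and Lemma \ref{threevectorcounting}"): bound the weight by $N_{\fk{l}_1}^{2(s-1)}\bigl(1+N_{\fk{l}_3}^2/\langle\Omega\rangle\bigr)$, count the unweighted piece by solving the linear constraint for $k_{\fk{l}_1}$, and foliate the weighted piece over level sets of $\Omega$ using the counting lemma. The only detail worth making explicit is that absorbing the logarithm from $\sum_{|\kappa|\lesssim N_{\fk{l}_1}^2}\langle\kappa\rangle^{-1}$ into $N_{\fk{l}_2}^{\varepsilon}$ uses that nonemptiness of the decoration set forces $N_{\fk{l}_1}\sim N_{\fk{l}_2}$.
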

	
	When the tree has scale $2$, we have the following (proof slightly more involved):
	
	\begin{lem}\label{lem:wcounting}
		Let $\mathcal{T}$ be a scale-$2$ expanded branching tree of size $2m_0\ge 6$, adapted to dyadic scales $(N_{\fk{n}})$. 
		Let $\fk{l}_1,\dots,\fk{l}_{2m_0}$ be the quasi-order of leaves so that $N_{\fk{l}_1}\ge\cdots\ge N_{\fk{l}_{2m_0}}$. 
		Assume there is no pairing between leaves within the same generation. Then for any $\varepsilon>0$,
		\[
		\sum_{(k_{\fk{n}}:\fk{n}\in\mathcal{T}) }
		\Big|\frac{\psi_{2s}(\vec{k}_{\fk{r}}) }{\langle\Omega(\vec{k}_{\fk{r}})\rangle } \Big|^2
		\lesssim_{\varepsilon}
		N_{\fk{l}_1}^{\,4s-2+\varepsilon}\,N_{\fk{l}_3}^{\,4}\prod_{j=4}^{2m_0}N_{\fk{l}_j}^{\,2}.
		\]
	\end{lem}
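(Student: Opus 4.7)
The plan is to combine a pointwise cancellation at the root $\fk{r}$ (via Lemma \ref{cancellationpsi2s}) with a scale-1 weighted counting bound (Corollary \ref{frequentterm}) applied to a \emph{pseudo-scale-1} tree at $\fk{r}$ in which the branching node $\fk{n}_0$ is treated as a pseudo-leaf carrying the ``frozen'' value $k_{\fk{n}_0}$.

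First, I apply Lemma \ref{cancellationpsi2s} to the first-generation momenta of $\fk{r}$, i.e.\ to the collection $\{|k_{\fk{n}_0}|\}\cup\{|k_\fk{l}|:\fk{l}\in\mathcal{L}_\fk{r}\}$, obtaining
$$\frac{|\psi_{2s}(\vec{k}_\fk{r})|^{2}}{\langle\Omega(\vec{k}_\fk{r})\rangle^{2}}\;\lesssim\; \lambda_1(\vec{k}_\fk{r})^{4s-4}\Bigl(1+\frac{\lambda_3(\vec{k}_\fk{r})^{4}}{\langle\Omega\rangle^{2}}\Bigr),$$
where $\lambda_j(\vec{k}_\fk{r})$ is the $j$-th largest first-generation magnitude. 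The triangle inequality at $\fk{n}_0$ yields $|k_{\fk{n}_0}|\le\sum_{\fk{l}\in\mathcal{L}_{\fk{n}_0}}|k_\fk{l}|\lesssim N_{\fk{l}_1}$, so $\lambda_1\lesssim N_{\fk{l}_1}$. The constant piece is immediate: the count of decorations under the single vector constraint $k_\fk{r}=k_{\fk{r}'}$ is $\lesssim\prod_{j\ge 2}^{2m_0}N_{\fk{l}_j}^{2}$, and the required estimate follows from $N_{\fk{l}_2}^{2}\le N_{\fk{l}_1}^{2+\varepsilon}N_{\fk{l}_3}^{2}$ (using $N_{\fk{l}_2}\le N_{\fk{l}_1}$ and $N_{\fk{l}_3}\ge 1$).

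For the oscillatory term I factorize along the branching at $\fk{n}_0$, writing $\sum_{(k_\fk{n})}(\cdot)=\sum_{k_{\fk{n}_0}}A(k_{\fk{n}_0})\,B(k_{\fk{n}_0})$, where $A(k_{\fk{n}_0})=\#\{(k_\fk{l})_{\mathcal{L}_{\fk{n}_0}}:\sum\iota_\fk{l} k_\fk{l}=k_{\fk{n}_0}\}$ and $B(k_{\fk{n}_0})$ is the corresponding weighted $\mathcal{L}_\fk{r}$-sum under $k_\fk{r}=k_{\fk{r}'}$. For fixed $k_{\fk{n}_0}$, $B$ is a weighted scale-1 sum on the pseudo-tree at $\fk{r}$ with pseudo-leaves $\mathcal{L}_\fk{r}\cup\{\fk{n}_0\}$, to which Corollary \ref{frequentterm} applies; combined with $\sum_{k_{\fk{n}_0}}A(k_{\fk{n}_0})=\prod_{j=1}^{b}P_j^{2}$ (where $(P_j)$ are the sorted $\mathcal{L}_{\fk{n}_0}$-scales), the identity $\prod_{j=1}^{a+1}M_j=N_{\fk{n}_0}\prod_{\fk{l}\in\mathcal{L}_\fk{r}}N_\fk{l}$, and $(\prod R)(\prod P)=\prod_{\fk{l}\in\mathcal{L}}N_\fk{l}$, this gives
$$\sum_{(k_\fk{n})}\frac{\lambda_1^{4s-4}\lambda_3^{4}}{\langle\Omega\rangle^{2}}\;\lesssim\; M_1^{4s-6}N_{\fk{n}_0}^{2}\bigl(1+M_2^{-1+\varepsilon}M_3^{3}\bigr)\prod_{\fk{l}\in\mathcal{L}}N_\fk{l}^{2},$$
where $(M_j)$ are the sorted first-generation scales at $\fk{r}$. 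Using $M_1\lesssim N_{\fk{l}_1}$, $N_{\fk{n}_0}\lesssim P_1$ and, crucially, $M_3\lesssim N_{\fk{l}_3}$, the prefactor is bounded by $N_{\fk{l}_1}^{4s-4+\varepsilon}N_{\fk{l}_3}^{2}/N_{\fk{l}_2}^{2}$, which combined with $\prod_{\fk{l}\in\mathcal{L}}N_\fk{l}^{2}=N_{\fk{l}_1}^{2}N_{\fk{l}_2}^{2}N_{\fk{l}_3}^{2}\prod_{j\ge 4}^{2m_0}N_{\fk{l}_j}^{2}$ yields the target bound.

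The main obstacle is the bound $M_3\lesssim N_{\fk{l}_3}$: I expect to prove it by checking that at most two first-generation scales at $\fk{r}$ can strictly exceed $N_{\fk{l}_3}$, because if both top global leaves lie in $\mathcal{L}_\fk{r}$ then $P_1\le N_{\fk{l}_3}$ and so $N_{\fk{n}_0}\le N_{\fk{l}_3}$, while the remaining splits impose symmetric constraints coupling $N_{\fk{n}_0}$ and $P_1$ to the global ordering. Secondary difficulties are (i) the potential cross-pairing of $\fk{n}_0$ with some $\fk{l}\in\mathcal{L}_\fk{r}$, which formally violates the no-pairing hypothesis of Corollary \ref{frequentterm} applied to the pseudo-tree and must be isolated by a dyadic split and treated via Proposition \ref{crosspaired} (exactly as in the singular cross-pairing analysis outlined in Section~\ref{singulartree}); and (ii) the degenerate cases $a\le 2$, where the pseudo-scale-1 tree at $\fk{r}$ is too small to accommodate Corollary \ref{frequentterm} and must be handled by direct elementary counting, using that $\psi_{2s}$ then involves only a small number of distinct frequencies.
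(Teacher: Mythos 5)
Your main line of attack coincides with the paper's: apply Lemma \ref{cancellationpsi2s} to the first generation at $\fk{r}$ (with $\fk{n}_0$ as a pseudo-leaf), split off the constant piece, and for the oscillatory piece combine a counting bound for the pseudo-scale-1 tree at $\fk{r}$ with the count of second-generation decorations summing to $k_{\fk{n}_0}$, then verify the combinatorial inequality on the products of dyadic scales. The fact you flag as the ``main obstacle,'' $M_3\lesssim N_{\fk{l}_3}$, is indeed true and provable exactly as you sketch: at most one of the three largest first-generation scales can be $N_{\fk{n}_0}$, and $N_{\fk{n}_0}\lesssim\min\{\max_{\mathcal{L}_{\fk{r}}}N_{\fk{l}'},\max_{\mathcal{L}_{\fk{n}_0}}N_{\fk{l}''}\}$ always produces a third distinct genuine leaf of scale $\gtrsim M_3$; the paper verifies the equivalent product inequality $\bigl(\prod_{j\ge2}N_{\fk{l}_j'}^2\bigr)\bigl(\prod_{j\ge2}N_{\fk{l}_j''}^2\bigr)\lesssim\prod_{j\ge2}^{2m_0}N_{\fk{l}_j}^2$ by the same case distinction. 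One bookkeeping point: in the factorization $\sum_{k_{\fk{n}_0}}A(k_{\fk{n}_0})B(k_{\fk{n}_0})$ you should take $\sup_{k_{\fk{n}_0}}A$ (which is $\lesssim\prod_{j\ge2}P_j^2$, fixing the top second-generation leaf) times the \emph{full} sum $\sum_{k_{\fk{n}_0}}B$, since the counting input only controls the sum over all decorations of the pseudo-tree, not $B$ at fixed $k_{\fk{n}_0}$; writing $\sum_{k_{\fk{n}_0}}A=\prod_jP_j^2$ puts the factors on the wrong side. This is how the paper arranges it (via $\sup_{k_{\fk{n}_0}}|\mathbf b(k_{\fk{n}_0})|$), and your exponents in fact correspond to the corrected version.

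The genuine gap is your item (i), the degenerate case $\iota_{\fk{n}_0}k_{\fk{n}_0}+\iota_{\fk{l}}k_{\fk{l}}=0$ for some $\fk{l}\in\mathcal{L}_{\fk{r}}$. You propose to treat it ``via Proposition \ref{crosspaired}, exactly as in Section \ref{singulartree},'' but Proposition \ref{crosspaired} and the Section \ref{singulartree} analysis are $L^p(d\mu_s)$ moment estimates for multilinear forms of Gaussians; they say nothing about the purely deterministic weighted count $\sum|\psi_{2s}/\langle\Omega\rangle|^2$ that Lemma \ref{lem:wcounting} asserts, so this sub-case is left unproved as written. The correct (and still elementary) treatment, which the paper carries out, is to observe that under this degeneracy the weight $\psi_{2s}/\langle\Omega\rangle$ depends only on the remaining root-generation frequencies, so the sum factorizes into a weighted count over a smaller scale-1 tree (handled by Corollary \ref{frequentterm}) times an unweighted count over the subtree rooted at $\fk{n}_0$ with partner leaf $\fk{l}$; the product of the two bounds is then absorbed into the target right-hand side by the same scale comparisons as in the non-degenerate case. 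With that replacement, and the $\sup/\!\sum$ correction above, your argument closes.
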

	
	\begin{proof}
		Let $\fk{n}_0$ be the unique branching node different from the root $\fk{r}$. 
		Let $\fk{l}_1',\fk{l}_2',\dots,\fk{l}_{2n_0}'$ be a quasi-order of the nodes $\mathcal{L}_{\fk{r}}\cup\{\fk{n}_0\}$ with $N_{\fk{l}_1'}\ge\cdots\ge N_{\fk{l}_{2n_0}'}$, which are precisely the leaves of the scale-$1$ tree $\mathcal{T}'$ obtained by cutting the leaves $\mathcal{L}_{\fk{n}_0}$ from $\mathcal{T}$. 
		Without loss of generality, assume $2n_0\ge 4$ (if $2n_0=2$, the weight $\psi_{2s}(\vec{k}_{\fk{r}})$ with two first-generation leaves vanishes). 
		Let $\fk{l}_1'',\dots,\fk{l}_{2\ell_0-1}''$ be a quasi-order of the leaves in $\mathcal{L}_{\fk{n}_0}$ with $N_{\fk{l}_1''}\ge\cdots\ge N_{\fk{l}_{2\ell_0-1}''}$.
		
		\smallskip
		\noindent\emph{Non-degenerate contribution:} for every $\fk{l}'\in\mathcal{L}_{\fk{r}}$, 
		\[
		\iota_{\fk{l}'}k_{\fk{l}'}+\iota_{\fk{n}_0}k_{\fk{n}_0}\neq 0.
		\]
		Set
		\[
		\mathbf{b}(k_{\fk{n}_0})
		:= \sum_{\substack{N_{\fk{l}''}\le |k_{\fk{l}''}|<2N_{\fk{l}''}\\ \fk{l}''\in\mathcal{L}_{\fk{n}_0}}}
		\mathbf{1}_{\ \iota_{\fk{n}_0}k_{\fk{n}_0}=\sum_{\fk{l}''}\iota_{\fk{l}''}k_{\fk{l}''}}.
		\]
		Then
		\[
		\sum_{(k_{\fk{n}})} \Big|\frac{\psi_{2s}(\vec{k}_{\fk{r}}) }{\langle\Omega(\vec{k}_{\fk{r}})\rangle } \Big|^2
		= \sum_{\substack{(k_{\fk{l}'})\\ N_{\fk{l}'}\le |k_{\fk{l}'}|<2N_{\fk{l}'}}}
		\Big|\frac{\psi_{2s}(\vec{k}_{\fk{r}})}{\langle\Omega(\vec{k}_{\fk{r}})\rangle}\Big|^2\cdot \mathbf{b}(k_{\fk{n}_0}),
		\]
		where $(k_{\fk{l}'})$ denotes a decoration of the scale-$1$ tree $\mathcal{T}'$.
		Using Lemma \ref{cancellationpsi2s}, 
		\[
		\Big|\frac{\psi_{2s}(\vec{k}_{\fk{r}})}{\langle\Omega(\vec{k}_{\fk{r}})\rangle}\Big|^2
		\lesssim N_{\fk{l}_1'}^{\,2(2s-2)}\Big(1+\frac{N_{\fk{l}_3'}^{\,4}}{\langle\Omega(\vec{k}_{\fk{r}})\rangle^2}\Big),
		\]
		and applying Lemma \ref{threevectorcounting} (the $n=3$ case for the constraint involving $k_{\fk{l}_1'},k_{\fk{l}_2'},k_{\fk{l}_3'}$), we obtain
		\begin{align*}
			\sum_{\substack{(k_{\fk{l}'}:\fk{l}'\in\mathcal{T}')\\ \text{non-degenerate} } }	\Big|\frac{\psi_{2s}(\vec{k}_{\fk{r}}) }{\langle\Omega(\vec{k}_{\fk{r}})\rangle } \Big|^2\cdot \mathbf{b}(k_{\fk{n}_0})\lesssim_{\epsilon} &
			\sup_{k_{\fk{n}_0}}|\mathbf{b}(k_{\fk{n}_0})|\cdot
			N_{\fk{l}_1'}^{4s-4}(N_{\fk{l}_2'}^{1+\epsilon} N_{\fk{l}_3'}^5+N_{\fk{l}_2'}^2N_{\fk{l}_3'}^2)\prod_{j=4}^{2n_0}N_{\fk{l}_j'}^2\\
			\lesssim &
			N_{\fk{l}_1'}^{4s-2+\epsilon}N_{\fk{l}_3'}^4\Big(\prod_{j=4}^{2n_0}N_{\fk{l}_j'}^2\Big)\cdot\Big(\prod_{j=2}^{2\ell_0-1}N_{\fk{l}_j''}^2\Big)\\
			\sim & N_{\fk{l}_1'}^{4(s-1)+\epsilon}N_{\fk{l}_3'}^2\cdot \prod_{j=2}^{2n_0}N_{\fk{l}_j'}^2\cdot\prod_{j=2}^{2\ell_0-1}N_{\fk{l}_j''}^2.
		\end{align*}
		A moment of thinking leads to the obvious bound $N_{\fk{l}_1'}^{4(s-1)+\epsilon}N_{\fk{l}_3'}^2\lesssim N_{\fk{l}_1}^{4(s-1)+\epsilon}N_{\fk{l}_3}^2$. 
		To conclude, we need to verify that
		$$
		\Big(\prod_{j=2}^{2n_0}N_{\fk{l}_j'}^2\Big)\cdot\Big(\prod_{j=2}^{2\ell_0-1}N_{\fk{l}_j''}^2\Big)
		\lesssim \prod_{j=2}^{2m_0}N_{\fk{l}_j}^2.$$
		Indeed, we note $N_{\fk{l}_1'}\sim N_{\fk{l}_2'}$ and $N_{\fk{l}_1}\sim N_{\fk{l}_2}$. If $\fk{n}_0\in\{\fk{l}_1',\fk{l}_2'\}$, then from $N_{\fk{l}_1'}\sim N_{\fk{n}_0}\lesssim N_{\fk{l}_1''}$, we have $N_{\fk{l}_1''}\sim N_{\fk{l}_1}$. Hence the left hand side of the desired inequality is comparable to $$\frac{\prod_{\fk{l}\in\mathcal{L}}N_{\fk{l}}^2}{N_{\fk{l}_1''}^2}\sim \prod_{j=2}^{2m_0}N_{\fk{l}_j}^2.
		$$
		If $\fk{n}_0\notin\{\fk{l}_1',\fk{l}_2'\}$, the left hand side equals to
		$$ \prod_{\fk{l}\in\mathcal{L}}N_{\fk{l}}^2\cdot \frac{ N_{\fk{n}_0}^2 }{\max_{\mathcal{L}_{\fk{r}}}N_{\fk{l}'}^2\cdot\max_{\mathcal{L}_{\fk{n}_0}}N_{\fk{l}''}^2 }\lesssim \prod_{j=2}^{2m_0}N_{\fk{l}_j}^2,
		$$
		where we use the fact that $N_{\fk{n}_0}\lesssim \min\{
		\max_{\mathcal{L}_{\fk{r}}}N_{\fk{l}'},\max_{\mathcal{L}_{\fk{n}_0}}N_{\fk{l}''}
		\}$.
		
		\smallskip
		\noindent\emph{Degenerate contribution:}	Suppose that 
		$$ \iota_{\fk{n}_0}k_{\fk{n}_0}+\iota_{\fk{l}_{j_0}'}k_{\fk{l}_{j_0}'}=0,\quad \text {for some } \fk{l}_{j_0}'\in\mathcal{L}_{\fk{r}}.
		$$
		In this case, the weight in the sum $\Big|\frac{\psi_{2s}(\vec{k}_{\fk{r}})}{\langle\Omega(\vec{k}_{\fk{r}})\rangle} \Big|^2$ degenerate to a similar weight depending only on $k_{\fk{l}'}, \fk{l}'\in\mathcal{L}_{\fk{r}}\setminus\{\fk{l}_{j_0}'\}$. Consequently, the sum $\sum_{(k_{\fk{n}}: \fk{n}\in\mathcal{T}) }
		\Big|\frac{\psi_{2s}(\vec{k}_{\fk{r}}) }{\langle\Omega(\vec{k}_{\fk{r}})\rangle } \Big|^2$
		breaks into the product of two sums corresponding to two disjoint scale-1 expanded branching trees $\mathcal{T}_1,\mathcal{T}_2$, where $\mathcal{T}_1$ is obtained by cutting the nodes $\fk{n}_0,\fk{l}_{j_0}'$ and leaves $\mathcal{L}_{\fk{n}_0}$, while the tree $\mathcal{T}_2$ is obtained by cutting all the leaves in $\mathcal{L}_{\fk{r}}$ expect $\fk{l}_{j_0}'$, making the $\fk{n}_0$ the new root and $\fk{l}_{j_0}'$ the partner leave. Expressed in formula,
		\begin{align}\label{eq:splitting}
			\sum_{(k_{\fk{n}}: \fk{n}\in\mathcal{T}) }
			\Big|\frac{\psi_{2s}(\vec{k}_{\fk{r}}) }{\langle\Omega(\vec{k}_{\fk{r}})\rangle } \Big|^2\mathbf{1}_{\iota_{n_0}k_{\fk{n_0}}+\iota_{\fk{l}_{j_0}'}k_{\fk{l}_{j_0}'}=0 }=
			\Big(\sum_{(k_{\fk{n}}:\fk{n}\in\mathcal{T}_1)  }
			|\frac{\psi_{2s}(\vec{k}_{\fk{r}}) }{\langle\Omega(\vec{k}_{\fk{r}})\rangle } \Big|^2
			\Big)\cdot\Big(\sum_{(k_{\fk{n}}:\fk{n}\in\mathcal{T}_2) } 1 \Big).
		\end{align}
		Abusing the notation, we denote $\fk{l}_1',\fk{l}_2',\cdots,\fk{l}_{2n_0-2}'$ a quasi-order of leaves in the tree $\mathcal{T}_1$ while still keeping the the notation of quasi-order $\fk{l}_1'',\cdots,\fk{l}_{2\ell_0-1}''$ of leaves in $\mathcal{L}_{\fk{n}_0}$, the partner leave of $\mathcal{T}_2$, paired with the root $\fk{n}_0$ will be denoted as $\fk{n}_0'$. Using Corollary \ref{frequentterm} \footnote{Strictly speaking, we should assume that there is no other pairing within the leaves of $\mathcal{T}_1$. We ignore this issue, since we can always eliminate paired leave to reduce the size of the scale-1 tree. }, the right hand side of \eqref{eq:splitting} can be controlled crudely by (we ignore the zero contribution if $2n_0=4$)
		\begin{align*}
			&N_{\fk{l}_1'}^{4s-4+\epsilon}N_{\fk{l}_3'}^2\cdot\Big(\prod_{j=2}^{2n_0-2}N_{\fk{l}_j'}^2\Big) \cdot\frac{\prod_{j=1}^{2\ell_0-1}N_{\fk{l}_j''}^2}{\max\{N_{\fk{n}_0'}^2,N_{\fk{l}_1''}^2 \} } \\
			\lesssim & N_{\fk{l}_1'}^{4s-4+\epsilon}N_{\fk{l}_3'}^2\cdot\prod_{j=2}^{2m_0}N_{\fk{l}_j}^2\lesssim N_{\fk{l}_1}^{4s-2+\epsilon}N_{\fk{l}_3}^4\prod_{j=4}^{2m_0}N_{\fk{l}_j}^2.
		\end{align*}
		The proof of Lemma \ref{lem:wcounting} is now complete.
	\end{proof}

	Let us recall in this section the Wiener chaos estimate for multi-linear expression of complex Gaussian random variables. In the sequel we adopt the notation $z^{+}=z$ and $z^-=\ov{z}$ for a complex number $z\in\C$. From \cite{DNY1}, we have : 
	\begin{lem}[Wiener chaos estimate]\label{pairingWiener} 
		Consider the multi-linear expression of Gaussian:
		$$ F(\omega',\omega)=\sum_{k_{1},\cdots,k_{n}}c_{k_{1},\cdots,k_{n}}(\omega')\cdot \prod_{j=1}^{n}g_{k_{j} }  ^{\iota_{j}}(\omega),
		$$
		where the random variables $c_{k_{1},\cdots,k_{n} }(\omega')$ are independent of $g_{k_{j } }  ^{\iota_{j}}(\omega) $. Then for any $p\geq 2$, we have
		$$ \|F(\omega',\omega)\|_{L_{\omega}^p}\leq Cp^{\frac{n}{2}}\Big(\sum_{(X,Y)}\sum_{(k_{i}):i\notin X\cup Y }\Big(\sum_{\text{pairing }(k_{i_{\nu}},k_{j_{\nu}} )  }
		|c_{k_{1},\cdots, k_{n}}| \Big)^2\Big)^{\frac{1}{2}},
		$$
		where in the above summation, we require that $X=\{i_1,\cdots,i_{l}\}$, $Y=\{j_1,\cdots,j_{l}\}$ are two disjoint subsets of $\{1,\cdots,n\}$. Moreover, the last paring summation means
		$$ \sum_{\text{paring} (k_{i_{\nu}},k_{j_{\nu}}) }(\cdots)=\sum_{\sigma\in \fk{S}(X,Y)}\sum_{\text{paring }(k_i,k_{\sigma(i)}):i\in X }(\cdots),
		$$
		where $\fk{S}(X,Y)$ is the set of bijections from $X$ to $Y$.
		
	\end{lem}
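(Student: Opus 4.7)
The plan is to reduce the multilinear Gaussian expression to a sum of genuine Wiener chaoses via the Wick/Hermite decomposition, and then apply Nelson's hypercontractivity theorem chaos-by-chaos. Fix $\omega'$ and treat $c_{k_1,\dots,k_n}(\omega')$ as deterministic coefficients. Using the Hermite decomposition of a product of standard complex Gaussians, write
\[
\prod_{j=1}^{n}g_{k_{j}}^{\iota_{j}}(\omega)\;=\;\sum_{(X,Y,\sigma)}\;\prod_{i\in X}\mathbf{1}_{k_{i}=k_{\sigma(i)}}\,\mathbf{1}_{\iota_{i}+\iota_{\sigma(i)}=0}\;\;{:}\!\!\prod_{j\notin X\cup Y}\!g_{k_{j}}^{\iota_{j}}{:},
\]
where the sum runs over disjoint subsets $X,Y\subset\{1,\dots,n\}$ with $\#X=\#Y$ and bijections $\sigma:X\to Y$. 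Substituting this into $F$ and exchanging summations gives $F=\sum_{(X,Y,\sigma)}F^{(X,Y,\sigma)}$ with
\[
F^{(X,Y,\sigma)}=\sum_{(k_{j})_{j\notin X\cup Y}}\tilde c^{(X,Y,\sigma)}_{(k_{j})}(\omega')\;{:}\!\!\prod_{j\notin X\cup Y}\!g_{k_{j}}^{\iota_{j}}{:},\qquad \tilde c^{(X,Y,\sigma)}_{(k_{j})}:=\!\!\sum_{\text{pairing }(k_{i},k_{\sigma(i)})}\!\!c_{k_{1},\dots,k_{n}},
\]
so that each $F^{(X,Y,\sigma)}$ is a random variable in the $(n-2\#X)$-th homogeneous Wiener chaos.

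Next I would apply Nelson's hypercontractivity: for any element of the $d$-th Wiener chaos, $\|\cdot\|_{L^{p}_{\omega}}\leq (p-1)^{d/2}\|\cdot\|_{L^{2}_{\omega}}$. Since $d\leq n$, this gives $\|F^{(X,Y,\sigma)}\|_{L^{p}_{\omega}}\lesssim p^{n/2}\|F^{(X,Y,\sigma)}\|_{L^{2}_{\omega}}$. For the $L^{2}$ norm, orthogonality of Wick products together with Wick's theorem yield
\[
\|F^{(X,Y,\sigma)}\|_{L^{2}_{\omega}}^{2}=\sum_{\tau}\sum_{(k_{j})_{j\notin X\cup Y}}\tilde c^{(X,Y,\sigma)}_{(k_{j})}\,\overline{\tilde c^{(X,Y,\sigma)}_{(k_{\tau(j)})}},
\]
where $\tau$ runs over bijections of $\{1,\dots,n\}\setminus(X\cup Y)$ preserving the signs $\iota_{j}$. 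Cauchy--Schwarz for each such $\tau$ (and absorbing the $\#\tau\leq n!$ factor into $C$) produces the crude but sufficient bound
\[
\|F^{(X,Y,\sigma)}\|_{L^{2}_{\omega}}^{2}\;\lesssim_{n}\;\sum_{(k_{j})_{j\notin X\cup Y}}\bigl|\tilde c^{(X,Y,\sigma)}_{(k_{j})}\bigr|^{2}.
\]

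To conclude, sum the pieces by the triangle inequality, square-summing the finitely many $(X,Y,\sigma)$ configurations via Cauchy--Schwarz, and pass absolute values inside the contraction sum using $|\tilde c^{(X,Y,\sigma)}_{(k_{j})}|\leq \sum_{\text{pairing}}|c_{k_{1},\dots,k_{n}}|$. Together these yield precisely the claimed inequality, with the constant $C$ depending only on $n$. The main bookkeeping obstacle is the careful handling of the complex-Gaussian sign constraints $\iota_{i}+\iota_{\sigma(i)}=0$ (internal pairings within $F$) versus $\iota_{j}=\iota_{\tau(j)}$ (cross-pairings when computing $\|F^{(X,Y,\sigma)}\|_{L^{2}}^{2}$); all other ingredients are standard, and the $p^{n/2}$ factor comes entirely from Nelson's theorem applied to the top degree $d=n$.
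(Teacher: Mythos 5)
Your proof is correct. Note that the paper does not actually prove this lemma --- it is quoted from \cite{DNY1} without proof --- and your argument (Wick/Hermite decomposition of the product of complex Gaussians into a sum over partial pairings of Wick products, hypercontractivity $\|\cdot\|_{L^p}\le (p-1)^{d/2}\|\cdot\|_{L^2}$ applied chaos-by-chaos, and the $L^2$ norm computed by the diagram formula plus Cauchy--Schwarz) is exactly the standard proof used in that reference. The only bookkeeping point worth making explicit is that the sum over $(X,Y,\sigma)$ should parametrize each unordered partial matching once (e.g.\ by the convention $X\subset\{j:\iota_j=+1\}$, $Y\subset\{j:\iota_j=-1\}$, which is forced by the constraint $\iota_i+\iota_{\sigma(i)}=0$); any residual overcounting is by a factor at most $2^n$ and is harmless for the upper bound since it is absorbed into $C=C(n)$.
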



	\section{Estimates for basic configurations}\label{Sec:Basicestimates}

	\subsection{Deterministic basic tree estimates}

	Recall the energy multilinear form defined in \eqref{Te}. We first establish the following deterministic basic tree estimates. Recall that the sequential Sobolev space $h^s$ is defined via the norm
	$$ \|a\|_{h^s}:=\|\langle k\rangle^s a_k \|_{l^2}.
	$$
	\begin{prop}[Basic tree estimates]\label{Basictreeestimate}
		Let $n_0\geq 2$ and $a^{(1)},\cdots, a^{(2n_0)}\in h^{s}(\Z^2)$. \\
		$\mathrm{(1)}$ For an expanded branching tree $\mathcal{T}$ of scale $s(\mathcal{T})=1$ and size $n(\mathcal{T})=2n_0$, we have
		\begin{align}\label{eq:generaltree} 
			&|\mathcal{T}_e(a^{(1)},\cdots,a^{(2n_0)})|\notag \\ \lesssim 
			&\mathrm{max}_j^{(1)}\|a^{(j)}\|_{h^{(s-1)+}}
			\mathrm{max}_j^{(2)}\|a^{(j)}\|_{h^{s-1}}
			\mathrm{max}_j^{(3)}\|a^{(j)}\|_{h^{2+}}
			\mathrm{max}_j^{(4)}\|a^{(j)}\|_{l^{2}}
			\prod_{l=5}^{2n_0}
			\mathrm{max}_j^{(l)}\|a^{(j)}\|_{h^{1+}}.
		\end{align}
		In particular, if the tree $\mathcal{T}$ is adapted to dyadic scales $(N_1,\cdots,N_{2n_0})$ with $N_{(1)}\geq N_{(2)}\geq\cdots N_{(2n_0)}$ the non-increasing rearrangement of $N_1,\cdots, N_{2n_0}$, then
		\begin{align}\label{eq:dyadictreeestimate} 
			|\mathcal{T}_e(a^{(1)},\cdots, a^{(2n_0)})|\lesssim N_{(1)}^{2s-2+}N_{(3)}^{2+}\prod_{j=5}^{2n_0}N_{(j)}\prod_{j=1}^{2n_0}\|a^{(j)}\|_{l^2}.
		\end{align}
		$\mathrm{(2)}$ For an expanded branching tree $\mathcal{T}$ of scale $s(\mathcal{T})=2$ and size $n(\mathcal{T})=2n_0$, we have
		\begin{align}\label{eq:generaltree'} 
			&|\mathcal{T}_e(a^{(1)},\cdots,a^{(2n_0)})|\notag \\ \lesssim 
			&\mathrm{max}_j^{(1)}\|a^{(j)}\|_{h^{(s-1)+}}
			\mathrm{max}_j^{(2)}\|a^{(j)}\|_{h^{s-1}}
			\mathrm{max}_j^{(3)}\|a^{(j)}\|_{h^{2+}}
			\mathrm{max}_j^{(2n_0)}\|a^{(j)}\|_{l^{2}}
			\prod_{\substack{4\leq l\leq 2n_0-1
					}}
			\mathrm{max}_j^{(l)}\|a^{(j)}\|_{h^{1+}}.
		\end{align}
		In particular, if the tree $\mathcal{T}$ is adapted to dyadic scales $(N_1,\cdots,N_{2n_0})$ with $N_{(1)}\geq N_{(2)}\geq\cdots N_{(2n_0)}$ the non-increasing rearrangement of $N_1,\cdots, N_{2n_0}$, then
		\begin{align}\label{eq:dyadictreeestimate'} 
			|\mathcal{T}_e(a^{(1)},\cdots, a^{(2n_0)})|\lesssim N_{(1)}^{2s-2+}N_{(3)}^{2+}N_{(2n_0)}^{-1}\prod_{j=4}^{2n_0}N_{(j)}\prod_{j=1}^{2n_0}\|a^{(j)}\|_{l^2}.
		\end{align}
	\end{prop}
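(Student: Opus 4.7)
The plan is to prove both parts by (i) reducing \eqref{eq:generaltree} and \eqref{eq:generaltree'} to the adapted versions \eqref{eq:dyadictreeestimate} and \eqref{eq:dyadictreeestimate'} via dyadic decomposition $a^{(j)}=\sum_{N_j}a^{(j)}_{N_j}$, with the $+\epsilon$'s in the Sobolev exponents absorbing the logarithmic losses from dyadic summation; (ii) bounding the kernel $\psi_{2s}$ (resp.\ $\psi_{2s}/\Omega$) on each dyadic block by Lemma \ref{cancellationpsi2s}; (iii) controlling the remaining multilinear sum by combining Bourgain's $L^4_{t,x}$-Strichartz on $\mathbb T^2$ with Bernstein's inequality.

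For part (1), Lemma \ref{cancellationpsi2s} together with the restriction $\Omega(\vec k_\fk{r})=0$ in the definition of $\mathcal T_e$ yields the weight bound $|\psi_{2s}|\lesssim N_{(1)}^{2s-2}N_{(3)}^2$ on each dyadic block. Using the identity $\mathbf 1_{\Omega=0}=(2\pi)^{-1}\int_0^{2\pi}e^{it\Omega}\,dt$, the remaining multilinear sum is recast as the space-time integral
\[
\frac{1}{2\pi}\int_0^{2\pi}\int_{\mathbb T^2}\prod_{\fk{l}\in\mathcal L}\bigl(e^{it\iota_\fk{l}\Delta}F^{(\fk{l})}\bigr)^{\iota_\fk{l}}\,dx\,dt,
\]
where $F^{(\fk{l})}$ is the function with Fourier coefficients $|a^{(\fk{l})}_{N_\fk{l}}|$. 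H\"older in $(t,x)$ placing the four largest-frequency factors in $L^4_{t,x}$ and the remaining $2n_0-4$ factors in $L^\infty_{t,x}$, combined with Bourgain's bound $\|e^{it\Delta}F_N\|_{L^4_{t,x}(\mathbb T^2\times[0,2\pi])}\lesssim N^{0+}\|F_N\|_{L^2}$ (cf.\ \cite{BoGAFA}) and Bernstein $\|F_N\|_{L^\infty_x}\lesssim N\|F_N\|_{L^2}$, yields \eqref{eq:dyadictreeestimate}. To pass to \eqref{eq:generaltree}, one then distributes the kernel weight $\lambda_1^{2s-2}\sim\lambda_1^{s-1}\lambda_2^{s-1}$ (valid under the admissibility assumption in Lemma \ref{cancellationpsi2s}, which forces $\lambda_1\sim\lambda_2$) across the two highest-norm inputs.

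For part (2), the weight $\psi_{2s}/\Omega$ depends only on the first-generation indices of $\mathcal T$. I would first perform the inner sum over the second-generation leaves $\mathcal L_{\fk{n}_0}$ with $k_{\fk{n}_0}$ fixed (using H\"older in $x$ with two $L^2$'s and Bernstein $L^\infty$ on the rest), producing a function $\tilde a$ of $k_{\fk{n}_0}$. This reduces the problem to a scale-$1$-like multilinear form on the outer leaves $\mathcal L_\fk{r}\cup\{\fk{n}_0\}$, now with weight $\psi_{2s}/\Omega$ in place of $\psi_{2s}\mathbf 1_{\Omega=0}$. Lemma \ref{cancellationpsi2s} gives $|\psi_{2s}/\Omega|\lesssim N_{(1)}^{2s-2}(1+N_{(3)}^2/|\Omega|)$; I then dyadically decompose $|\Omega|\sim M\geq 1$ and represent $\mathbf 1_{|\Omega|\sim M}$ by a Schwartz cutoff in $t$ of scale $M^{-1}$, reducing each $M$-piece to the same Strichartz/Bernstein scheme as in part (1). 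The $1/M$ factor compensates the summation in $M$, giving only a $N^{0+}$ loss; the overall bound is weaker than the scale-$1$ one by exactly the factor $N_{(4)}/N_{(2n_0)}$ built into \eqref{eq:dyadictreeestimate'}, reflecting the absence of the rigid $\Omega=0$ restriction.

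The main obstacle is attaining the sharp exponent $N_{(3)}^{2+}$ (rather than the $N_{(3)}^{5/2}$ one would obtain from direct Cauchy--Schwarz with the counting bound of Lemma \ref{threevectorcounting}). The key saving of $N_{(3)}N_{(4)}$ is precisely what Bourgain's $L^4_{t,x}$-Strichartz on $\mathbb T^2$ provides: by dispersively controlling four factors simultaneously (as opposed to two via Bernstein alone), it delivers the $(N_{(1)}N_{(2)}N_{(3)}N_{(4)})^{0+}$ multi-loss that matches the claimed bound.
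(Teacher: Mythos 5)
Your proposal is correct and follows essentially the same route as the paper: dyadic reduction to the adapted estimates, the weight bound from Lemma \ref{cancellationpsi2s}, the representation of the resonance constraint as a time integral estimated by $L^4_{t,x}$-Strichartz on the four largest factors and Bernstein on the rest, and for scale $2$ the collapse of the second generation into a single input $\tilde a$ whose $\ell^2$ norm is controlled by trivial counting, reducing to the scale-$1$ case. The only (harmless) deviations are that the paper invokes the bilinear refinement of the $L^4$ estimate from \cite{BGT} (loss only on the third-largest frequency) rather than the linear one, and decomposes $\Omega$ over exact level sets $\Omega=\kappa$ with the factor $e^{it\kappa}$ rather than dyadically in $|\Omega|$.
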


	\begin{rem}
		For scale-2 trees the estimate in \eqref{eq:dyadictreeestimate'} loses a factor of $N_{(4)}/N_{(2n_0)}$ compared with the scale-1 bound, but this is a moderate loss compared to the dominant one $N_{(3)}^{2+}$.
	\end{rem}

	\begin{proof}
		(1)		Let $\mathcal{T}$ be such that $s(\mathcal{T})=1$. By decomposing $a^{(j)}$ dyadically,  it suffices to prove \eqref{eq:dyadictreeestimate}.
		Therefore, without loss of generality, we assume that each function $a^{(j)}$ is already localized dyadically as $a^{(j)}=\mathbf{1}_{N_j/2<|k_j|\le 2N_j}a^{(j)}$. 
		
		By relabeling the leaves as the index set $I=\{1,2,\cdots,2n_0\}$ and using Lemma \ref{cancellationpsi2s}, we have
		\begin{align*}
			|\mathcal{T}_e(a^{(1)},\cdots, a^{(2n_0)})|\lesssim \sum_{1\leq|\kappa|\lesssim N_{(1)}N_{(3)} }\frac{N_{(1)}^{2s-2}(N_{(3)}^2+|\kappa|) }{|\kappa|}
			\sum_{\substack{k_1-k_2+\cdots-k_{2n_0}=0\\
					\Omega(\vec{k}_{\fk{r}})=\kappa  } }\prod_{j=1}^{2n_0}|a_{k_j}^{(j)}|. 
		\end{align*}
		First, we bound the contribution
		\begin{align*}
			\mathrm{I}:=\sum_{1\leq |\kappa|\lesssim N_{(1)}N_{(3)} } N_{(1)}^{2s-2}
			\sum_{\substack{k_1-k_2+\cdots-k_{2n_0}=0\\
					\Omega(\vec{k}_{\fk{r}})=\kappa  } }\prod_{j=1}^{2n_0}|a_{k_j}^{(j)}|
		\end{align*}
		as (to fix the idea, we assume that $|k_1|\sim |k_2|\sim N_{(1)}$)
		\begin{align*}
			& N_{(1)}^{2s-2}\sum_{k_1,k_3,\cdots,k_{2n_0}}|a_{k_1}^{(1)}||a_{k_1+k_3-k_4+\cdots-k_{2n_0}}^{(2)}|\prod_{j=3}^{2n_0}|a_{k_j}^{(j)}|\leq N_{(1)}^{2s-2}\|a^{(1)}\|_{l^2}\|a^{(2)}\|_{l^2}\prod_{j=3}^{2n_0}\|a^{(j)}\|_{l^1}\\
			\lesssim & N_{(1)}^{2s-2}\|a^{(1)}\|_{l^2}\|a^{(2)}\|_{l^2}\prod_{j=3}^{2n_0}\|a^{(j)}\|_{h^{1+}}\lesssim N_{(1)}^{2s-2}\prod_{j=3}^{2n_0}N_{(j)}\prod_{j=1}^{2n_0}\|a^{(j)}\|_{l^2},
		\end{align*}
		where we used the Cauchy-Schwarz inequality $\|b_k\|_{l^1(\mathbb{Z}^2)}\lesssim N\|b_k\|_{l^{2}(\Z^2)}$, provided that $b_k=b_k\mathbf{1}_{|k|\sim N}$.
		
		Next, we control the contribution
		$$ \mathrm{II}:=\sum_{1\leq|\kappa|\lesssim N_{(1)}N_{(3)} }\frac{N_{(1)}^{2s-2}N_{(3)}^2 }{|\kappa|}
		\sum_{\substack{k_1-k_2+\cdots-k_{2n_0}=0\\
				\Omega(\vec{k}_{\fk{r}})=\kappa  } }\prod_{j=1}^{2n_0}|a_{k_j}^{(j)}|
		$$
		using the Strichartz inequality.
		Set 
		$$ A^{(j)}(x):=\sum_{k_j}|a_{k_j}^{(j)}|\e^{ik_j\cdot x},
		$$
		Then for any permutation  $\tau\in\fk{S}(I)$ 
		$$ \sum_{\substack{ k_1-k_2+\cdots-k_{2n_0}=0\\
				\Omega(\vec{k}_{I})=\kappa } } \prod_{j=1}^{2n_0}|a_{k_j}^{(j)}|=\int_0^{2\pi}\int_{\T^2}\e^{it\kappa}\prod_{j=1}^{4}(\e^{it\Delta}A^{\tau(j)})^{\iota_{\tau(j)}}\cdot \prod_{j=5}^{2n_0}(\e^{it\Delta}A^{\tau(j)})^{\iota_{\tau(j)}}dxdt.
		$$
		By the bilinear refinement of the $L^4$-Strichartz inequality (Proposition 3.5 of \cite{BGT}) on $\T^2$, the absolute value of the above quantity is bounded by
		\begin{align*}
			\mathrm{max}_{j=1,2,3,4}^{(3)}\{N_{\tau(j)}^{0+}\}\Big(\prod_{j=1}^4\|A^{\tau(j)}\|_{l^2}\Big)\cdot \prod_{j=5}^{2n_0}\|e^{it\Delta}A^{\tau(j)}\|_{L_{t,x}^{\infty}}.
		\end{align*}
		Hence by Bernstein, we obtain that
		$$\sum_{\substack{ k_1-k_2+\cdots-k_{2n_0}=0\\
				\Omega(\vec{k}_{I})=\kappa } } \prod_{j=1}^{2n_0}|a_{k_j}^{(j)}|\lesssim \min_{\tau\in\fk{S}(I)}\|a^{\tau(1)}\|_{h^{0+}}\Big(\prod_{j=2}^4\|a^{\tau(j)}\|_{l^2}\Big)\cdot\Big(\prod_{j=5}^{2n_0}\|a^{\tau(j)}\|_{h^{1}} \Big) 
		$$
		Therefore,
		$$ \mathrm{II}\lesssim N_{(1)}^{2s-2+}N_{(3)}^{2+}\Big(\prod_{j=5}^{2n_0}N_{(j)}\Big)\prod_{j=1}^{2n_0}\|a^{(j)}\|_{l^2}.
		$$
		\vspace{0.3cm}

		(2)	Next we study the situation where $s(\mathcal{T})=2$. Our argument is to reduce the estimate to (1). We label the children of the non-root node $\fk{n}_0$ as $1,2,\cdots, 2m_0-1$ for some $m_0< n_0$. As before, we assume that each $a^{(j)}$ is supported in $N_j/2\leq |k_j|\leq 2N_j$. 
		Denote by $\widetilde{\mathcal{T}}$ the subtree with root $\fk{n}_0$ and $\mathcal{T}'$ the expanded branching tree of scale $1$ obtained by deleting all leaves of the simple tree $\widetilde{\mathcal{T}}$. Let
		$$ \widetilde{a}^{(1)}=\prod_{j=1}^{2m_0-1}a^{(j)}, \; \widetilde{a}^{(2)}=a^{(2m_0)},\cdots, \widetilde{a}^{2l_0}=a^{2n_0},
		$$
		where $l_0=n_0-m_0+1$.
		Thus
		$$ \mathcal{T}_e(a^{(1)},\cdots, \cdots, a^{(2n_0)})=\mathcal{T}_e'(\widetilde{a}^{(1)},\cdots, a^{(2l_0)}).
		$$
		In order to apply (1), we need to estimate the $L^2$ norm of $\widetilde{a}^{(1)}$.
		
		Denote by $P_{(1)}\geq P_{(2)}\geq\cdots P_{(2m_0-1)}$ the non-increasing rearrangement of frequency scales of leaves of $\widetilde{\mathcal{T}}$ $N_1,\cdots, N_{2m_0-1}$, by Cauchy-Schwarz, we have
		\begin{align}\label{T0estimate} 
			\|\widetilde{a}^{(1)}\|_{L^2}^2\leq  &\sum_{k}\Big(\sum_{\substack{k_1-k_2+\cdots+k_{2m_0-1}=k \\
					|k_j|\sim N_j	 
			} } \prod_{j=1}^{2m_0-1}|a_{k_j}^{(j)}| \Big)^2 \notag  \\
			\leq &\sum_{k_1,\cdots,k_{2m_0-1} }\prod_{j=1}^{2m_0-1}|a_{k_j}^{(j)}|^2\cdot \sup_{k}\sum_{\substack{k_1-k_2+\cdots+k_{2m_0-1}=k\\ |k_j|\sim N_j  } }1\notag  \\
			\lesssim &\Big(\prod_{j=2}^{2m_0-1}P_{(j)}^2\Big)\prod_{j=1}^{2m_0-1}\|a^{(j)}\|_{l^2}^2.
		\end{align} 
		Together with the estimate for scale-1 tree, we obtain that
		\begin{align*}
			| \mathcal{T}_e(a^{(1)},\cdots, \cdots, a^{(2n_0)}) |\lesssim
			M_{(1)}^{2s-2+}M_{(3)}^{2+}\Big(\prod_{j=5}^{2l_0}M_{(j)}\Big)
			\Big(\prod_{l=2}^{2m_0-1} P_{(l)}
			\Big) \prod_{j=1}^{2n_0}\|a^{(j)}\|_{l^2},
		\end{align*}
		where $M_{(1)}\geq \cdots M_{(2l_0)}$ be the non-increasing rearrangement of dyadic scales $N_{\fk{n}_0}, N_{2m_0}, N_{2m_0+1},\cdots, N_{2n_0}$ for leaves of $\mathcal{T}'$. 
		A moment check leads to 
		$$ 	M_{(1)}^{2s-2+}M_{(3)}^{2+}\Big(\prod_{j=5}^{2l_0}M_{(j)}\Big)
		\Big(\prod_{l=2}^{2m_0-1} P_{(l)}
		\Big)\lesssim N_{(1)}^{2s-2+}N_{(3)}^{2+}N_{(2n_0)}^{-1}\prod_{j=4}^{2n_0}N_{(j)}.
		$$
		We complete the proof of Proposition \ref{Basictreeestimate}.
	\end{proof}
	
	\subsection{$L^p$-moment bounds for the cross-paired tree estimates}\label{sub:cross-paired}
	
	We fix an expanded branching tree $\mathcal T$ of scale 2, adapted to dyadic sizes $(N_{\fk n})_{\fk n\in\mathcal T}$. The most technical step is to control pairings between high‐frequency leaves lying in different generations.
	To do so, we relabel the children leaves of $\fk{r}$ as $\fk{r}_1,\cdots,\fk{r}_{2m_0-1}$ and $\fk{r}'=\fk{r}_{2m_0}$. Without loss of generality, we assume that the unique non-root node $\fk{n}=\fk{r}_1$ and denote its children $\fk{n}_1,\cdots,\fk{n}_{2l_0-1}$. Note that the total number of leaves is $2n_0=2m_0-1+2l_0-1$.

	In order to distinguish the paired leaves with high frequencies in a given decoration, we introduce the notion of dominated leaves.  Recall the notion of quasi-order for leaves in a decoration adapted to dyadic numbers $(N_{\fk{n}}:\fk{n}\in\mathcal{T})$.  
	\begin{definition}
		Let $\mathcal{T}$ be an expanded branching tree adapted to dyadic numbers $(N_{\fk{n}}:\fk{n}\in\mathcal{T})$. For $0<\delta<1$, define
		\begin{align}\label{dominatedleaves} \mathcal{L}_{\delta}^*:=\{\fk{l}\in\mathcal{L}: N_{\fk{l}}\geq N_{\fk{l}_1}^{1-\delta}\}
		\end{align}
		be the set of \emph{$\delta$-dominated leaves}, where we recall that $(\fk l_1,\dots,\fk l_{2n_0})$ is the quasi‐ordered list of all leaves.
	\end{definition}

	
	\begin{definition}
		A pairing $\mathcal P=(X,Y,\sigma)$ on $\mathcal L$ is called \emph{$\delta$-dominating} if 
		\[
		X,Y\subset\mathcal L_\delta^*.
		\]
		We abbreviate such a triple by $(X,Y,\sigma;\,\mathcal L_\delta^*)$.
	\end{definition}
	
	\begin{definition}
		A decoration $(k_{\fk n})_{\fk n\in\mathcal T}$ is said to be \emph{adapted} to the $\delta$-dominating pairing $(X,Y,\sigma;\mathcal L_\delta^*)$ if :
		\begin{itemize}
			\item for every $\fk l\in X$, one has 
			$
			k_{\fk l}=k_{\sigma(\fk l)}, \iota_{\fk l}+\iota_{\sigma(\fk l)}=0,
			$
			\item for all unpaired $\fk l,\fk l'\in\mathcal L_\delta^*\setminus (X\cup Y)$,
			$
			k_{\fk l}\neq k_{\fk l'}$, or
			$	\iota_{\fk l}=\iota_{\fk l'}.
			$
		\end{itemize}
	\end{definition}
	
	In other words, in a decoration adapted to a $\delta$-dominating pairing,  dominated leaves in $X\cup Y$ are paired according to $\sigma$, and there is no another paring in $\delta$-dominated leaves $\mathcal{L}_{\delta}^*$.
	
	According to the definitions above, we  decompose $\mathcal{T}_e(a^{(1)},\cdots,a^{(2n_0)})$ according to given assignments of pairings:
	$$ \sum_{(X,Y,\sigma;\mathcal{L}_{\delta}^*)}
	\mathcal{T}_{e,X,Y}^{\sigma}(a^{(1)},\cdots,a^{(2n_0)} ),
	$$
	where
	\begin{align}\label{crosspairedtree}
		\mathcal{T}_{e,X,Y}^{\sigma}(a^{(1)},\cdots,a^{(2n_0)} ):=\sum_{ (k_{\fk{n}}:\fk{n}\in\mathcal{T})_{(X,Y,\sigma;\mathcal{L}_{\delta}^*)} }\frac{\psi_{2s}(\vec{k}_{\fk{r}})}{\Omega(\vec{k}_{\fk{r}})}\prod_{j=1}^{2n_0}(a_{k_{\fk{l}_j}}^{(\fk{l}_j)})^{\iota_{\fk{l}_j}}.
	\end{align}

	In the rest of this section, we prove the following proposition of the $L^p$-moment bounds for trees with at least two dominated paired leaves  between different generations : 
	\begin{prop}[Cross-paired tree estimate]
		\label{crosspaired}
		Let $0<\delta<\tfrac{1}{100(n_0+s)}$ and let $\mathcal T$ be a scale-2 expanded tree of size $2n_0$, quasi-ordered so that 
		\[
		N_{\fk l_1}\ge N_{\fk l_2}\ge\cdots\ge N_{\fk l_{2n_0}}.
		\]
		Write $\fk r$ for the root, $\fk r_1$ for the other (unique) branching node, and set 
		$
		l=\#\mathcal L_\delta^*\;\ge2.
		$
		Fix a $\delta$-dominating pairing $(X,Y,\sigma)$ with 
		$
		X\subset\mathcal L_{\fk{r}_1}, 
		Y\subset\mathcal L_{\fk r}.
		$
		Then for each projection $a^{(j)}=\mathbf P_{N_{\fk l_j}}\phi$, for all $p\ge2$ and any small $\epsilon>0$, any $\lambda\geq 1$, one has
		\[
		\bigl\|\mathcal T_{e,X,Y}^\sigma\,\mathbf1_{\|\phi\|_{H^1}\le\lambda}\bigr\|_{L^p_{d\mu_s}}
		\;\lesssim_\epsilon\;
		p^{1-\frac\epsilon{2s}}\,
		\lambda^{2n_0}\,
		N_{\fk l_1}^{\epsilon}
		\Bigl[
		N_{\fk l_1}^{-2(s-1)\delta\,(1-\frac\epsilon{2s})}
		+N_{\fk l_3}^{-2\,(1-\frac\epsilon{2s})}
		\Bigr],
		\]
		when $l=2$, and
		\[
		\bigl\|\mathcal T_{e,X,Y}^\sigma\,\mathbf1_{\|\phi\|_{H^1}\le\lambda}\bigr\|_{L^p_{d\mu_s}}
		\;\lesssim_\epsilon\;
		p^{1-\epsilon}\,
		\lambda^{2n_0}\,
		N_{\fk l_1}^{-\frac{l-2}{l}},
		\]
		for $l\ge3$.  The implicit constants are independent of $p$.
	\end{prop}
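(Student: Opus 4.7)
The strategy combines three ingredients: (i) the pairing $\sigma$ collapses each pair of Gaussian factors $g_{k_{\fk l}}, g_{k_{\sigma(\fk l)}}$ into $|g_{k_{\fk l}}|^2 = 1 + (|g_{k_{\fk l}}|^2-1)$, where the mean-$1$ part is purely deterministic and the fluctuation lies in the second Wiener chaos; (ii) the cutoff $\mathbf{1}_{\|\phi\|_{H^1}\le\lambda}$ provides the deterministic bounds $|\hat\phi(k)|\le \lambda\langle k\rangle^{-1}$ and $\sum_{|k|\sim N}|\hat\phi(k)|^2\le \lambda^2 N^{-2}$; (iii) the weight estimate of Lemma \ref{cancellationpsi2s} together with the counting estimates Lemma \ref{threevectorcounting} and Lemma \ref{lem:wcounting}. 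I would first substitute $\hat\phi(k)=g_k/\langle k\rangle^s$ and enforce the pairing constraints $k_{\fk l}=k_{\sigma(\fk l)}$ for $\fk l\in X$, reducing $\mathcal{T}_{e,X,Y}^\sigma$ to a sum over the paired and the unpaired frequencies separately.

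For $l=2$, the two dominated leaves $\fk l_1, \fk l_2$ share a single frequency $k_\ast$ with $|k_\ast|\sim N_{\fk l_1}$, and the remaining $2n_0-2$ leaves are non-dominated with $N_{\fk l_j}\le N_{\fk l_1}^{1-\delta}$. I would apply the pointwise bound $|\hat\phi(k)|\le \lambda\langle k\rangle^{-1}$ on the non-dominated leaves, and the dyadic $\ell^2$ bound $\sum_{|k_\ast|\sim N_{\fk l_1}}|g_{k_\ast}|^2\langle k_\ast\rangle^{-2s}\le \lambda^2 N_{\fk l_1}^{-2}$ on the paired variable. The decay in $N_{\fk l_1}$ is extracted from the weight: Lemma \ref{cancellationpsi2s} splits $|\psi_{2s}/\Omega|$ into a piece of size $N_{\fk l_1}^{2(s-1)}$ and a piece of size $N_{\fk l_1}^{2(s-1)}N_{\fk l_3}^2/\langle\Omega\rangle$. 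In the first regime the $\delta$-dominance $N_{\fk l_3}\gtrsim N_{\fk l_1}^\delta$ combined with Lemma \ref{threevectorcounting} produces the gain $N_{\fk l_1}^{-2(s-1)\delta}$; in the second regime Lemma \ref{threevectorcounting} applied to $(k_{\fk l_1},k_{\fk l_2},k_{\fk l_3})$ with the constraint $\Omega(\vec k_{\fk r})=\kappa$ yields $N_{\fk l_3}^{-2}$. The factor $p^{1-\epsilon/(2s)}$ then emerges from interpolating the chi-square chaos bound $\||g_{k_\ast}|^2-1\|_{L^p}\lesssim p$ against the deterministic $L^\infty$ bound, with the small compensating loss $N_{\fk l_1}^\epsilon$.

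For $l\ge 3$, at least one dominated leaf is unpaired, and I would apply the Wiener chaos estimate (Lemma \ref{pairingWiener}) jointly to all $l$ dominated Gaussian factors, producing the prefactor $p^{l/2}$ multiplied by a weighted $\ell^2$ sum that is controlled directly by Lemma \ref{lem:wcounting}; the non-dominated leaves are absorbed by the pointwise bound $|\hat\phi(k)|\le \lambda\langle k\rangle^{-1}$. The stated $p^{1-\epsilon}N_{\fk l_1}^{-(l-2)/l}$ then follows by interpolating, for $l-2$ of the dominated factors, the chaos contribution against the deterministic $\lambda\langle k\rangle^{-1}$ bound, trading the extra $p^{(l-2)/2}$ for polynomial decay in $N_{\fk l_1}$. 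The main obstacle I expect is precisely this interpolation: to reduce the naive chaos growth from $p^{n_0}$ (or $p^{l/2}$) down to almost $p^1$, one must genuinely use the joint structure of the $H^1$ cutoff, the $\delta$-dominance, and the cancellation in $\psi_{2s}$, and carefully track how each paired, unpaired dominated, and non-dominated leaf contributes to the final exponents of $\lambda$, $p$, $N_{\fk l_1}$ and $N_{\fk l_3}$. A subsidiary difficulty is ensuring, in the near-resonant regime, that Lemma \ref{threevectorcounting} is implemented on the correct triple of frequencies, relying on the $\delta$-dominance to guarantee that $N_{\fk l_3}$ is not too small.
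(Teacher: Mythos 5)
Your overall architecture (Wiener chaos on the unpaired dominated Gaussians, deterministic $H^1$-cutoff bounds on the non-dominated leaves, the weight bound of Lemma \ref{cancellationpsi2s}, and a final interpolation between a probabilistic $L^p$ bound and a deterministic bound to bring $p^{l/2}$ down to $p^{1-\epsilon}$) matches the paper's strategy. But the central point of the cross-pairing analysis is missing: you never explain how the resonance constraint $\Omega(\vec k_{\fk r})=\kappa$ restricts the paired frequency $k_\ast=k_{\fk l_{i_0}}=k_{\fk l_{j_0}}$. Because $\fk l_{i_0}\in\mathcal L_{\fk r_1}$ and $\fk l_{j_0}\in\mathcal L_{\fk r}$ lie in \emph{different} generations, $k_\ast$ does not drop out of $\Omega$: writing $\mathbf a=\sum_{\fk n_i\neq\fk l_{i_0}}\iota_{\fk n_i}k_{\fk n_i}$, the constraint becomes $|k_\ast-\mathbf a|^2-|k_\ast|^2=\mathrm{const}$. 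The paper's proof hinges on the dichotomy: if $\mathbf a\neq 0$ this is a line, so $k_\ast$ has only $O(N_{\fk l_1})$ choices (a two-vector counting, which is where the decay $N_{\fk l_1}^{-1+}$, hence $N_{\fk l_3}^{-2}$ after the $N_{\fk l_3}^2/\langle\Omega\rangle$ weight, comes from); if $\mathbf a=0$ the constraint degenerates, but then $k_{\fk r_1}=k_{\fk r_2}$ forces the top-frequency part of $\psi_{2s}$ to cancel, leaving $|\psi_{2s}|\lesssim N_{\fk l_3}^{2(s-1)}(\cdots)$, and the gain $N_{\fk l_1}^{-2(s-1)\delta}$ comes from $N_{\fk l_3}\le N_{\fk l_1}^{1-\delta}$ (note: for $l=2$ the third leaf is \emph{not} dominated, so your inequality ``$N_{\fk l_3}\gtrsim N_{\fk l_1}^{\delta}$'' is the wrong direction and not what produces this exponent). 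Without this dichotomy one gets no decay at all in the paired variable: summing $|g_{k_\ast}|^2\langle k_\ast\rangle^{-2s}$ freely over $|k_\ast|\sim N_{\fk l_1}$ against the weight $N_{\fk l_1}^{2s-2}$ is $O(1)$.

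A related technical objection: you propose to run Lemma \ref{threevectorcounting} on the triple $(k_{\fk l_1},k_{\fk l_2},k_{\fk l_3})$ and, for $l\ge3$, to control the chaos kernel ``directly by Lemma \ref{lem:wcounting}''; but Lemma \ref{threevectorcounting} explicitly requires no pairing among the summed variables, and Lemma \ref{lem:wcounting} is stated for decorations with no same-generation pairing and does not incorporate the cross-generation constraint $k_{\fk l}=k_{\sigma(\fk l)}$. In the paper the $l\ge3$ case is instead handled by fixing the paired variables inside the counting (Subcases 1.1 and 1.2), running the same $\mathbf a\neq0$ versus $\mathbf a=0$ dichotomy, and in the degenerate case splitting the decoration into two independent scale-$1$ constraints and invoking Corollary \ref{frequentterm}. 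You would also need the explicit deterministic bound $|\mathcal T^{\sigma}_{e,X,Y}|\lesssim N_{\fk l_1}^{2(s-2)+2\delta}\lambda^{2n_0}$ (the paper's Step 1) as the other endpoint of the interpolation; your ``factor-by-factor'' trade of $p^{(l-2)/2}$ for decay is too vague to verify and is not how the exponent $-\frac{l-2}{l}$ actually arises.
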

	
	\begin{proof}
		
		We organize the proof in three steps.
		As the first simplification, we may assume that in the analysis below, no inner pairings within the same generation for dominated frequencies.

		\noi
		$\bullet${\bf Step 1 : Deterministic estimate} :
		We begin by isolating the two paired leaves and the next two largest unpaired leaves:
		
		\begin{itemize}
			\item Let $i_0<j_0$ be the indices of the first dominated leaf in the second generation and its partner in the root generation:
			\begin{align}\label{i0j0}
				i_0=\min\{i:\fk l_i\in X\}, 
				\quad
				j_0=\sigma(i_0).
			\end{align}
			\item Let $i_1<j_1$ be the two largest indices among the remaining leaves:
			\begin{align}\label{i1j1}
				i_1=\min\{i\notin\{i_0,j_0\}\}, 
				\quad
				j_1=\min\{j\notin\{i_0,j_0,i_1\}\}.
			\end{align}
		\end{itemize}
		
		By construction, each paired leaf satisfies
		\(\,N_{\fk l_{i_0}}=N_{\fk l_{j_0}}\ge N_{\fk l}\) for all \(\fk{l}\in X\cup Y\).  Moreover one checks that
		\[
		N_{\fk{l}_1}^2\,N_{\fk{l}_3}^2
		\;\prod_{r\in\{i_0,j_0,i_1,j_1\}}N_{\fk l_r}^{-1}
		\;\le\;
		1 \;+\; N_{\fk{l}_3}^2\,N_{\fk{l}_1}^{-2(1-\delta)}.
		\]
		Using this and Lemma \ref{cancellationpsi2s}, we have
		\begin{align}\label{I} 
			|\mathcal{T}_{e,X,Y}^{\sigma}|\lesssim &\sum_{k_{\fk{l}_{i_0}},k_{\fk{l}_{j}},j\neq i_0,j_0  } \mathrm{1}_{\sum_{ j}\iota_{\fk{l}_j}k_{\fk{l}_j}=0 }
			\frac{N_{\fk{l}_1}^{2s-2}(N_{\fk{l}_3}^2+|\Omega(\vec{k}_{\fk{r}})|) }{|\Omega(\vec{k}_{\fk{r}})|}
			|a_{k_{\fk{l}_{i_0}}}^{(\fk{l}_{i_0})} 
			a_{k_{\fk{l}_{i_0}}}^{(\fk{l}_{j_0})}|\prod_{j\neq i_0,j_0}^{}|a_{k_{\fk{l}_j}}^{(\fk{l}_j)}| \notag \\
			\leq & 2N_{\fk{l}_1}^{2s-2}N_{\fk{l}_3}^2\|a^{(\fk{l}_{i_0})}\|_{l^2}\|a^{(\fk{l}_{j_0})} \|_{l^2} \sum_{k_{\fk{l}_j}:j\neq i_0,j_0 } \mathbf{1}_{\sum_{j\neq i_0,j_0 }\iota_{\fk{l}_j}k_{\fk{l}_j}=0}\prod_{j\neq i_0,j_0}|a_{k_{\fk{l}_j}}^{(\fk{l}_j)}| \notag \\
			\lesssim &N_{\fk{l}_1}^{2s-2}N_{\fk{l}_3}^2\Big(\prod_{j= i_0,j_0,i_1,j_1}\|a_{k_{\fk{l}_j}}^{(\fk{l}_j)}\|_{l^2}\Big)\Big(\prod_{j\neq  i_0,j_0,i_1,j_1}\|a_{k_{\fk{l}_j}}^{(\fk{l}_j)}\|_{l^1}
			\Big) \notag \\
			\lesssim &N_{\fk{l}_1}^{2s-4}\cdot N_{\fk{l}_1}^2N_{\fk{l}_3}^2\Big(\prod_{j= i_0,j_0,i_1,j_1}\|a_{k_{\fk{l}_j}}^{(\fk{l}_j)}\|_{l^2}\Big)\Big(\prod_{j\neq i_0,j_0,i_1,j_1}\|a_{k_{\fk{l}_j}}^{(\fk{l}_j)}\|_{h^1}
			\Big)
			\Big)\notag \\
			\lesssim &N_{\fk{l}_1}^{2(s-2)+2\delta}\prod_{j=1}^{2n_0}\|a^{(\fk{l}_j)}\|_{h^1}\leq N_{\fk{l}_1}^{2(s-2)+2\delta}\lambda^{2n_0}.
		\end{align}
		
		\noi
		$\bullet${\bf Step 2: $L^p$-moment estimate using Wiener chaos :}
		Recall that $a^{(\fk{l}_j)}=\mathbf{P}_{N_{\fk{l}_j}}\phi$, we write
		\begin{align*}
			\mathcal{T}_{e,X,Y}^{\sigma}=\sum_{\substack{(k_{\fk{n}}: \fk{n}\in\mathcal{T} )\\ 
					\iota_{\fk{l}}k_{\fk{l}}+\iota_{\sigma(\fk{l})}k_{\sigma(\fk{l})}=0,\fk{l}\in X
			} } \frac{\psi_{2s}(\vec{k}_{\fk{r}})}{\Omega(\vec{k}_{\fk{r}})}\Big(\prod_{j=1}^{l}\frac{g_{k_{\fk{l}_j}}^{\iota_{\fk{l}_j}} 
			}{\langle k_{\fk{l}_j}\rangle^s }\Big)\cdot\Big(\prod_{j=l+1}^{2n_0}(a_{k_{\fk{l}_j}}^{(\fk{l}_j)})^{\iota_{\fk{l}_j}} \Big).
		\end{align*}
		Since functions $a_{k_{\fk{l}_j}}^{(\fk{l}_j)}, j> l$ are independent of $g_{k_{\fk{l}_j}},j\leq l$, using the Wiener chaos for the first $l$ Gaussians (Lemma \ref{pairingWiener}), we have
		\begin{align}\label{LpI}
			&\|\mathcal{T}_{e,X,Y}^{\sigma}\cdot \mathbf{1}_{\|\phi\|_{H^1}\leq \lambda}\|_{L_{d\mu_s}^p}\notag \\ \lesssim &p^{\frac{l}{2} }
			\Big(\sum_{\substack{ k_{\fk{l}_i}: 1\leq i\leq l
					\\ \fk{l}_i\notin X\cup Y } } \Big(\sum_{
				\substack{k_{\fk{l}},\fk{l}\in X, \\
					\iota_{\fk{l}}k_{\fk{l}}+\iota_{\sigma(\fk{l})}k_{\sigma(\fk{l})}=0,\fk{l}\in X
				} 
			} |\Upsilon(k_{\fk{l}_1},k_{\fk{l}_2}\cdots,k_{\fk{l}_{l}} )|
			\Big)^2   \Big)^{\frac{1}{2}},
		\end{align}
		where\footnote{Here and in the sequel, we always replace $|g_k|^2$ by $1$. } 
		\begin{align}\label{Upsilon}  \Upsilon(k_{\fk{l}_1},\cdots,k_{\fk{l}_{l}})=\sum_{k_{\fk{l}_{l+1}},\cdots,k_{\fk{l}_{2n_0}} }\mathbf{1}_{\sum_{i}\iota_{\fk{l}_i}k_{\fk{l}_i}=0 }\cdot \frac{\psi_{2s}(\vec{k}_{\fk{r}})}{\Omega(\vec{k}_{\fk{r}})}
			\Big(\prod_{j=1}^{l}\frac{1}{\langle k_{\fk{l}_j}\rangle^{s}}\Big)
			\Big(\prod_{j=l+1}^{2n_0}(a_{k_{\fk{l}_j}}^{(\fk{l}_j)})^{\iota_{\fk{l}_j}} \Big).
		\end{align}
		To proceed on, we distinct two cases:
		
		\noindent
		$\bullet${\bf Case 1: 
			$l\geq 3$}	
		
		\noindent
		$\bullet${\bf Subcase 1.1 : $\# X\geq 2$. } 
		In this case, we have automatically that $l\geq 2(\# X)\geq 4$. Consequently, $N_{\fk{l}_3},N_{\fk{l}_4}> N_{\fk{l}_1}^{1-\delta}$.
		By Cauchy-Schwarz, for fixed $k_{\fk{l}_j}, 1\leq j\leq l, \fk{l}_j\notin X\cup Y$, we have
		\begin{align*}
			&\sum_{\substack{ k_{\fk{l}}=k_{\sigma(\fk{l})}\\
					\fk{l}\in X }}|\Upsilon(k_{\fk{l}_1},\cdots,k_{\fk{l}_{l}})|\\
			\leq &\Big(\prod_{j=1}^{l}N_{\fk{l}_j}^{-s}\Big)\Big(\prod_{j=l+1}^{2n_0}\|a_{k_{\fk{l}_j}}^{(\fk{l}_j)}\|_{l^2}\Big)
			\Big(\sum_{k_{\fk{l}_{l+1}},\cdots,k_{\fk{l}_{2n_0}}}\Big( 
			\sum_{\substack{ k_{\fk{l}}=k_{\sigma(\fk{l})} \\
					\fk{l}\in X } }
			\mathbf{1}_{\sum_{i\geq l+1}\iota_{\fk{l}_i}k_{\fk{l}_i}=-\sum_{\fk{l}\notin X\cup Y}\iota_{\fk{l}}k_{\fk{l}}}\Big|\frac{\psi_{2s}(\vec{k}_{\fk{r}})}{\Omega(\vec{k}_{\fk{r}}) }\Big|\Big)^2
			\Big)^{\frac{1}{2}}.
		\end{align*}
		Therefore,
		\begin{align}\label{SumUpsilon} 
			&\sum_{\substack{ k_{\fk{l}_i}:\fk{l}_i\notin X\cup Y\\
					1\leq i\leq l   } } \Big(\sum_{
				k_{\fk{l}}=k_{\sigma(\fk{l})},\fk{l}\in X   } |\Upsilon(k_{\fk{l}_1},\cdots,k_{\fk{l}_{l}})|
			\Big)^2\notag  \\
			\leq &
			\Big(\prod_{j=1}^{l}N_{\fk{l}_j}^{-2s}\Big)
			\Big(\prod_{j=l+1}^{2n_0}\|a_{k_{\fk{l}_j}}^{(\fk{l}_j)}\|_{l^2}^2\Big)
			\sum_{\substack{ k_{\fk{l}_i}:\fk{l}_i\notin X\cup Y\\
					1\leq i\leq l  } }\sum_{k_{\fk{l}_{l+1}},\cdots,k_{\fk{l}_{2n_0}}}
			\Big(\sum_{ k_{\fk{l}}=k_{\sigma(\fk{l})},
				\fk{l}\in X }
			\mathbf{1}_{\sum_{i\geq l+1}\iota_{\fk{l}_i}k_{\fk{l}_i}=-\sum_{\fk{l}\notin X\cup Y}\iota_{\fk{l}}k_{\fk{l}}}\Big|\frac{\psi_{2s}(\vec{k}_{\fk{r}})}{\Omega(\vec{k}_{\fk{r}}) }\Big|\Big)^2\notag \\
			\lesssim 
			& N_{\fk{l}_1}^{-4s-2s(1-\delta)(l-2)}\Big(\prod_{j=l+1}^{2n_0}\|a_{k_{\fk{l}_j}}^{(\fk{l}_j)}\|_{l^2}^2N_{\fk{l}_j}^2\Big)\sup_{k_{\fk{l}_j}:j\geq l+1}	\sum_{\substack{ k_{\fk{l}_i},\fk{l}_i\notin X\cup Y, 1\leq i\leq l\\
					\sum_{\fk{l}\notin X\cup Y}\iota_{\fk{l}}k_{\fk{l}}=0
			} }
			\Big(
			\sum_{
				k_{\fk{l}}=k_{\sigma(\fk{l})},\fk{l}\in X
			}\Big|\frac{\psi_{2s}(\vec{k}_{\fk{r}})}{\Omega(\vec{k}_{\fk{r}})}\Big|
			\Big)^2.
		\end{align}
		For fixed $k_{\fk{l}_j}, j\geq l+1$, since
		\begin{align}\label{lnotinXY} 
			\sum_{\substack{ k_{\fk{l}_i},\fk{l}_i\notin X\cup Y, 1\leq i\leq l\\
					\sum_{\fk{l}\notin X\cup Y}\iota_{\fk{l}}k_{\fk{l}}=0
			} }1\lesssim \Big(\prod_{1\leq j\leq l,\fk{l}_j\notin X\cup Y }N_{\fk{l}_j}^2\Big)\big(\max_{1\leq j\leq l,\fk{l}_j\notin X\cup Y } N_{\fk{l}_j}^{-2}\big),
		\end{align}
		we obtain that
		\begin{align}\label{sumIII} 
			\text{r.h.s. \eqref{SumUpsilon}}  
			\lesssim &N_{\fk{l}_1}^{-2sl+2s\delta(l-2)}\cdot N_{\fk{l}_1}^{2(l-2\# X-1 )}\Big(\prod_{j=l+1}^{2n_0}\|a_{k_{\fk{l}_j}}^{(\fk{l}_j)}\|_{h^1}^2\Big)\cdot \Big(\sup_{\substack{ 1\leq j\leq l,\fk{l}_j\notin X\cup Y\\
					\sum_{\fk{l}\notin X\cup Y}\iota_{\fk{l}}k_{\fk{l}}=0 } }\mathrm{II}\Big),
		\end{align}
		where
		$$ \mathrm{II}:=\Big(\sum_{|\kappa|\lesssim N_{\fk{l}_1}N_{\fk{l}_3}}\sum_{
			k_{\fk{l}}=k_{\sigma(\fk{l}) },\fk{l}\in X
		}  \mathbf{1}_{\Omega(\vec{k}_{\fk{r}})=\kappa} \frac{|\psi_{2s}(\vec{k}_{\fk{r}})|}{\langle\kappa\rangle}  \Big)^2.
		$$
		
		To deal with the counting related to II, recall that $X\subset\mathcal{L}_{\fk{r}_1}=\{\fk{n}_1,\cdots,\fk{n}_{2l_0-1} \}$. 
		Set  $$\mathbf{a}=\sum_{k_{\fk{n}_i}: \fk{n}_i\neq \fk{l}_{i_0} }\iota_{\fk{n}_j}k_{\fk{n}_j}.$$ Since $\fk{l}_{i_0}\in X$ has parent $\fk{r}_1$ and it is paired with $\fk{l}_{j_0}\in\mathcal{L}_{\fk{r}}$, then for fixed $k_{\fk{l}_j}, j\neq i_0,j_0 $, the constraint $\Omega(\vec{k}_{\fk{r}})=\kappa$ takes the form :
		$$ |k_{\fk{l}_{i_0}}-\mathbf{a}|^2-|k_{\fk{l}_{i_0}}|^2=\mathrm{const.}
		$$
		Then if $\mathbf{a}\neq 0$, the number of $k_{\fk{l}_{i_0}}$ is bounded by $O(N_{\fk{l}_{i_0}})$, hence the corresponding contribution in $\mathrm{II}$ is bounded by (recall that $N_{\fk{l}_3}> N_{\fk{l}_1}^{1-\delta}>N_{\fk{l}_1}^{1/2}$)
		$$ N_{\fk{l}_1}^{4(s-1)}(N_{\fk{l}_{i_0}}^{-4}+ N_{\fk{l}_{i_0}}^{-2+}N_{\fk{l}_3}^4)\cdot \Big(\prod_{\fk{l}\in X}N_{\fk{l}}^{4}\Big)\lesssim N_{\fk{l}_1}^{4(s-1)+2+}N_{\fk{l}_3}^4\cdot N_{\fk{l}_1}^{4(\# X-1)}.
		$$
		If $\mathbf{a}=0$, 
		we argue differently.
		Note that $X\subset\{\fk{n}_1,\cdots,\fk{n}_{2l_0-1} \}$, and the constraint $\mathbf{a}=0$ allows us to fix one frequency $k_{\fk{n}_{i_*}}$ such that $N_{\fk{n}_{i_*}}\sim \max_{\fk{n}_i\in X\setminus\{\fk{l}_{i_0}\} }N_{\fk{n}_i}$ (this is possible since $X\setminus \{\fk{l}_{i_0} \}\neq \emptyset$). 
		Hence the contribution in $\mathrm{II}$ with the additional constraint $\mathbf{a}=0$ can be bounded by
		$$ 
		N_{\fk{l}_1}^{4(s-1)}(N_{\fk{l}_3}^4+1)\Big(\prod_{\fk{l}\in X}N_{\fk{l}}^4\Big)\cdot \big(\max_{\fk{l}\in X\setminus\{\fk{l}_{i_0}\} }N_{\fk{l}}^{-4}\big)\lesssim N_{\fk{l}_1}^{4(s-1)}N_{\fk{l}_3}^4\cdot N_{\fk{l}_1}^{4(\# X-1)}.
		$$
		Plugging into \eqref{sumIII}, we obtain that
		$$ \mathrm{r.h.s.} \eqref{sumIII}\lesssim N_{\fk{l}_1}^{-2(l-2)(s-1)+2s\delta (l-2)}\prod_{j=l+1}^{2n_0}\|a\|_{h^1}^2,
		$$
		and the corresponding contribution in \eqref{LpI} is bounded by
		\begin{align}\label{outputcase1}  p^{\frac{l}{2}}N_{\fk{l}_1}^{-(l-2)(s-1-s\delta)}\prod_{j=l+1}^{2n_0}\|a\|_{h^1}\leq p^{\frac{l}{2}}N_{\fk{l}_1}^{-(l-2)(s-1-s\delta)}\lambda^{2n_0-l}.
		\end{align}
		
		\noi
		$\bullet${\bf Subcase 1.2 $\#X=1 $ } Then only dominated leaves $\fk{l}_{i_0}$ and $\fk{l}_{j_0}$ are paired, and
		the analysis is essentially the same. 
		Following the same line of argument as \eqref{SumUpsilon},\eqref{lnotinXY} and \eqref{sumIII},
		\begin{align}\label{sumIII'} 
			\text{r.h.s. \eqref{LpI}}  
			\lesssim &
			p^{\frac{l}{2}}\Big(\prod_{j=1}^{l}N_{\fk{l}_j}^{-s}\Big)\Big(\prod_{j=l+1}^{2n_0}\|a_{k_{\fk{l}_j}}^{(\fk{l}_j)}\|_{l^2} \Big)\Big(\sum_{\substack{k_{\fk{l}_j}:1\leq j\leq 2n_0, j\neq i_0,j_0\\
					\text{no pairing among }k_{\fk{l}_i},1\leq i\leq l, i\neq i_0,j_0 } }\mathbf{1}_{\sum_{j\neq i_0,j_0 }\iota_{\fk{l}_j}k_{\fk{l}_j}=0 }\cdot\mathrm{II}'\Big)^{\frac{1}{2}},
		\end{align}
		where
		$$ \mathrm{II}':=\Big(\sum_{k_{\fk{l}_{i_0}}=k_{\fk{l}_{j_0}}
		}  \Big|\frac{\psi_{2s}(\vec{k}_{\fk{r}})}{\Omega(\vec{k}_{\fk{r}})}\Big|  \Big)^2.
		$$
		Again we set $\mathbf{a}=\sum_{\fk{n}_i:\fk{n}_i\neq \fk{l}_{i_0} }\iota_{\fk{n}_i}k_{\fk{n}_i}$. Without loss of generality, we assume that $\fk{l}_{i_0}=\fk{n}_1$ is the child of the branching node $\fk{r}_1$ and its paired leaf $\fk{l}_{j_0}=\fk{r}_2$ is the child of the root $\fk{r}$. If $\mathbf{a}\neq 0$, the same analysis using two-vector counting yields
		$$\mathrm{(r.h.s. \eqref{sumIII'})}\lesssim p^{\frac{l}{2}}N_{\fk{l}_1}^{-(l-2)(s-1-s\delta)}\lambda^{2n_0-l}.
		$$
		Now assume that $\mathbf{a}=0$. Recall that $\fk{r}_1,\cdots,\fk{r}_{2m_0}$ are children of the root $\fk{r}$, we will instead using this additional constraint which breaks into two relations :
		$$ \sum_{j=3}^{2m_0}\iota_{\fk{r}_j}k_{\fk{r}_j}=0,\quad \sum_{j=2}^{2l_0-1}\iota_{\fk{n}_j}k_{\fk{n}_j}=0.
		$$
		Since $\Omega(\vec{k}_{\fk{r}}),\psi_{2s}(\vec{k}_{\fk{r}})$ depend only on $k_{\fk{r}_i}, 3\leq i\leq 2m_0$, so $\mathrm{II}'$ is trivially bounded by $N_{\fk{l}_1}^4\Big|\frac{\psi_{2s}(\vec{k}_{\fk{r}})}{\Omega(\vec{k}_{\fk{r}})}\Big|^2$ for fixed $k_{\fk{l}_j}, j\neq i_0,j_0$.
		Denote by $M_1\geq M_2\geq\cdots M_{2m_0-2}$ the non-increasing rearrangement of $N_{\fk{r}_3},\cdots, N_{\fk{r}_{2m_0}}$.  Denote $\mathcal{T}'$ the simple tree obtained by keeping the leaves $\fk{r}_{3},\cdots,\fk{r}_{2m_0}$ and the root $\fk{r}$. 
		 By \eqref{l2cor3.3}, we obtain that
		\begin{align*}
			\Big(\sum_{\substack{k_{\fk{l}_j}:1\leq j\leq 2n_0, j\neq i_0,j_0\\
					\text{no pairing among }k_{\fk{l}_i},1\leq i\leq l, i\neq i_0,j_0 } }&\mathbf{1}_{\sum_{j\neq i_0,j_0 }\iota_{\fk{l}_j}k_{\fk{l}_j}=0 }\cdot\mathrm{II}'\Big)^{\frac{1}{2}}
			\lesssim  N_{\fk{l}_1}^2\frac{\prod_{\fk{n}_j: 2\leq j\leq 2l_0-1}N_{\fk{n}_j}}{\max_{\fk{n}_j: 2\leq j\leq 2l_0-1}N_{\fk{n}_j}}\cdot  \Big(\sum_{k_{\fk{r}_i},\fk{r}_i\in\mathcal{T}'  }\Big|\frac{\psi_{2s}(\vec{k}_{\fk{r}}) }{\Omega(\vec{k}_{\fk{r}})} \Big|^2 \Big)^{\frac{1}{2}}\\
			\lesssim &N_{\fk{l}_1}^2\frac{\prod_{\fk{n}_j: 2\leq j\leq 2l_0-1}N_{\fk{n}_j}}{\max_{\fk{n}_j: 2\leq j\leq 2l_0-1}N_{\fk{n}_j}}\cdot
			M_1^{2(s-1)+}(M_2M_3+M_3^{\frac{5}{2}}M_2^{\frac{1}{2}})\Big(\prod_{j=4}^{2m_0-2}M_j\Big) \\
			\leq &N_{\fk{l}_1}^2N_{\fk{l}_{i_1}}^{2(s-1)+1+}\cdot M_2M_3\cdot \frac{\prod_{\fk{n}_j: 2\leq j\leq 2l_0-1}N_{\fk{n}_j}
			}{\max_{\fk{n}_j: 2\leq j\leq 2l_0-1}N_{\fk{n}_j}}\cdot	\prod_{j=4}^{2m_0-2}M_j	, 
		\end{align*}
		where to the last step, we use the definition of $i_1$ in \eqref{i1j1}, hence $M_1\sim M_2\leq N_{\fk{l}_{i_1}}$, and $M_3^{5/2}M_2^{1/2}+M_2M_3\leq M_2^2M_3\lesssim N_{\fk{l}_{i_1}}M_2M_3 $.
		We crudely estimate
		$$ M_2M_3\cdot \frac{\prod_{\fk{n}_j: 2\leq j\leq 2l_0-1}N_{\fk{n}_j}
		}{\max_{\fk{n}_j: 2\leq j\leq 2l_0-1}N_{\fk{n}_j}}\cdot	\prod_{j=4}^{2m_0-2}M_j\leq \frac{\prod_{j=1}^{l}N_{\fk{l}_j}}{N_{\fk{l}_{i_0}}N_{\fk{l}_{j_0}}N_{\fk{l}_{i_1}}}
		\prod_{i=l+1}^{2n_0}N_{\fk{l}_i}
		\leq N_{\fk{l}_1}^{2\delta}N_{\fk{l}_3}^{l-3}\prod_{i=l+1}^{2n_0}N_{\fk{l}_i},
		$$
		where the last inequality comes from the fact that $N_{\fk{l}_{i_0}}N_{\fk{l}_{j_0}}N_{\fk{l}_{i_1}}\geq N_{\fk{l}_{i_0}}N_{\fk{l}_{j_0}}N_{\fk{l}_3}$ and $N_{\fk{l}_{i_0}}=N_{\fk{l}_{j_0}}>N_{\fk{l}_1}^{1-\delta}$.
		Plugging into \eqref{sumIII'},  we deduce that
		\begin{align} 
			\mathrm{(r.h.s. \eqref{sumIII'})}\lesssim &p^{\frac{l}{2}}N_{\fk{l}_1}^{-ls+s\delta(l-2)}\Big(\prod_{j=l+1}^{2n_0}\|a_{k_{\fk{l}_j}}^{(\fk{l}_j)}\|_{h^1}\Big)\cdot N_{\fk{l}_1}^{2+2\delta}N_{\fk{l}_{i_1}}^{2(s-1)+1}\cdot N_{\fk{l}_3}^{l-3}\notag \\
			\lesssim & p^{\frac{l}{2}}N_{\fk{l}_1}^{-(l-2)(s-1-s\delta)+2\delta} \lambda^{2n_0-l}. \label{outputcase2} 
		\end{align}
		
		In summary, we proved :
		\[
		\|\mathcal T_{e,X,Y}^\sigma\|_{L^p}
		\;\lesssim\;
		p^{\frac l2}\,N_{\fk{l}_1}^{-(l-2)(s-1-s\delta)}\,
		\lambda^{2n_0-l},
		\quad l\ge3.
		\]

		\noi
		$\bullet${\bf Case 2: $l=2$} 
		By assumption, $\#X=\#Y=1$.
		
		In this case, automatically we have $i_0=1, j_0=2$, namely $\fk{l}_1,\fk{l}_2$ are paired, and they belong to two different generations. Since
		$ \sum_{j\geq 3}\iota_{\fk{l}_j}k_{\fk{l}_j}=0,
		$
		we have $N_{\fk{l}_4}\sim N_{\fk{l}_3}\leq N_{\fk{l}_1}^{1-\delta}$, and
		the contribution in \eqref{LpI} is bounded by
		\begin{align}\label{LpI'}
			&p\sum_{k_{\fk{l}_1}=k_{\fk{l}_2}} \frac{1}{\langle k_{\fk{l}_1}\rangle^{2s}} \sum_{k_{\fk{l}_j}:j\geq 3 }\mathbf{1}_{\sum_{j\geq 3}\iota_{\fk{l}_j}k_{\fk{l}_j}=0 }
			\Big|\frac{\psi_{2s}(\vec{k}_{\fk{r}})}{
				\Omega(\vec{k}_{\fk{r}}) }\Big|\prod_{j=3}^{2n_0}|a_{k_{\fk{l}_j}}^{(\fk{l}_j)}|\notag \\
			\lesssim & pN_{\fk{l}_1}^{-2s}\sum_{k_{\fk{l}_j}: j\geq 3 }\mathbf{1}_{\sum_{j\geq 3}\iota_{\fk{l}_j}k_{\fk{l}_j}=0
			}\prod_{j=3}^{2n_0}|a_{k_{\fk{l}_j}}^{(\fk{l}_j)}|\sum_{1\leq|\kappa|\lesssim N_{\fk{l}_1}N_{\fk{l}_3}} \sum_{k_{\fk{l}_1}=k_{\fk{l}_2}}\mathbf{1}_{\Omega(\vec{k}_{\fk{r}})=\kappa}\frac{N_{\fk{l}_1}^{2(s-1)}(N_{\fk{l}_3}^2+|\kappa|) }{\langle\kappa\rangle}.
		\end{align}
		Without loss of generality, we assume that $\fk{l}_1=\fk{n}_1\in \mathcal{L}_{\fk{r}_1}$ and $\fk{l}_2=\fk{r}_2\in\mathcal{L}_{\fk{r}}$.
		Set  $$\mathbf{a}=\sum_{j=2}^{2l_0-1}\iota_{\fk{n}_j}k_{\fk{n}_j}.$$ Fix $k_{\fk{l}_j}, j\geq 3$, if $\mathbf{a}\neq 0$, then as in the analysis of the previous case, the number of $k_{\fk{l}_1}$ such that $\Omega(\vec{k}_{\fk{r}})=\kappa$ is $O(N_{\fk{l}_1})$, we bound the right hand side of \eqref{LpI'} by
		\begin{align*} &pN_{\fk{l}_1}^{-2s}\|a_{k_{\fk{l}_3}}^{(\fk{l}_3)}\|_{l^2}
			\|a_{k_{\fk{l}_4}}^{(\fk{l}_4)}\|_{l^2}\Big(\prod_{j=5}^{2n_0}\|a_{k_{\fk{l}_j}}^{(\fk{l}_j)}\|_{h^1}\Big)\cdot N_{\fk{l}_1}^{2s-2}\big(N_{\fk{l}_1}^{1+}N_{\fk{l}_3}^{2}+N_{\fk{l}_1}^2\big)\notag \\
			\lesssim &
			p\big(N_{\fk{l}_1}^{-1+}+N_{\fk{l}_3}^{-2}\big)\Big(\prod_{j=3}^{2n_0}\|a_{k_{\fk{l}_j}}^{(\fk{l}_j)}\|_{h^1}\Big)\leq p(N_{\fk{l}_1}^{-1+}+N_{\fk{l}_3}^{-2})\lambda^{2n_0-2}.
		\end{align*} 
		If $\mathbf{a}=0$, then $k_{\fk{r}_1}=k_{\fk{r}_2}$, and
		$$ |\psi_{2s}(\vec{k}_{\fk{r}})|\lesssim N_{\fk{l}_3}^{2(s-1)}(N_{\fk{l}_5}^2+|\Omega(\vec{k}_{\fk{r}})|),
		$$
		where $|\Omega(\vec{k}_{\fk{r}})|\lesssim N_{\fk{l}_3}N_{\fk{l}_5}$. The right hand side of \eqref{LpI'} can be controlled crudely by
		\begin{align*}
			pN_{\fk{l}_1}^{-2(s-1)}N_{\fk{l}_3}^{2(s-1)}\prod_{j=3}^{2n_0}\|a_{k_{\fk{l}_j}}^{(\fk{l}_j)}\|_{h^1}\leq pN_{\fk{l}_1}^{-2(s-1)\delta}\lambda^{2n_0-2}.
		\end{align*}
		Put the two possibility together, the right hand side of \eqref{LpI'} is bounded by
		\begin{align}\label{Case2-1} 
			p(N_{\fk{l}_1}^{-2(s-1)\delta}+N_{\fk{l}_3}^{-2} )\lambda^{2n_0-2}.
		\end{align}
		
		\noi
		$\bullet${\bf Interpolation :}
		Putting estimates \eqref{outputcase1},\eqref{Case2-1} and  \eqref{outputcase2} together, we deduce that
		\begin{align*}
			(\mathrm{r.h.s. of \eqref{LpI}})\leq 
			\begin{cases}
				\; p(N_{\fk{l}_1}^{-2(s-1)\delta}+N_{\fk{l}_3}^{-2} )\lambda^{2n_0-2},\text{ when } l=2,\\
				\; p^{\frac{l}{2}}N_{\fk{l}_1}^{-(l-2)(s-1-s\delta)+2\delta}\lambda^{2n_0-l},\text{ when } l\geq 3.	
			\end{cases}
		\end{align*}
		Interpolating with the bound \eqref{I}, we deduce that if $l\geq 3$, for any $p\geq 2,\epsilon\in(0,1)$,
		$$ \|\mathcal{T}_{e,X,Y}^{\sigma}\mathbf{1}_{\|\phi\|_{H^1}\leq \lambda}\|_{L_{d\mu_s}^p}\lesssim_{\epsilon} p^{1-\epsilon}\cdot \lambda^{2n_0}N_{\fk{l}_1}^{-\frac{l-2}{l}(2-\frac{2l\delta}{l-2}-2s\delta)+C\epsilon
		}\leq 
		p^{1-\epsilon}\cdot \lambda^{2n_0}N_{\fk{l}_1}^{-\frac{l-2}{l} },
		$$ 
		thanks to the choice of $\delta<\frac{1}{100(s+n_0)}$.
		If $l=2$, we deduce that for any $\epsilon\in(0,1)$,
		$$ \|\mathcal{T}_{e,X,Y}^{\sigma}\mathbf{1}_{\|\phi\|_{H^1}\leq \lambda}\|_{L_{d\mu_s}^p}\lesssim_{\epsilon} p^{1-\frac{\epsilon}{2s}}\lambda^{2n_0}N_{\fk{l}_1}^{\epsilon}\big(N_{\fk{l}_1}^{-2(s-1)\delta(1-\frac{\epsilon}{2s})}+N_{\fk{l}_3}^{-2(1-\frac{\epsilon}{2s})}\big).
		$$
		The proof of Proposition \ref{crosspaired} is now complete.
	\end{proof}

	\section{Modified energy estimate I: Cross-pairing contributions }\label{singulartree} 

	Recall the definition \eqref{ST}  of the cross-pairing term $\mathcal{S}_{\fk{l}',\fk{l}''}(\mathcal{T})(v)$ with paired leaves $\fk{l}',\fk{l}''$ in two different generations. We prove in this section :
	\begin{prop}\label{singularterm} 
		Let $\mathcal{T}$ be an expanded branching tree of scale 2 with root $\fk{r}$ and the other branching node $\fk{n}_0$. Assume that $\#\mathcal{L}_{\fk{r}}=\#\mathcal{L}_{\fk{n}_0}=n_0=2m-1$. Let $\fk{l}'\in\mathcal{L}_{\fk{r}},\fk{l}''\in\mathcal{L}_{\fk{n}_0}$, then there exists $\epsilon_0\in(0,1)$ such that for any $p\geq 2$, $\lambda\geq 1$,
		$$ \|\Im\mathcal{S}_{\fk{l}',\fk{l}''}(\mathcal{T})(v)\mathbf{1}_{\|v\|_{H^{1} }\leq \lambda}\|_{L^p(d\mu_s)}\lesssim p^{1-\epsilon_0}\lambda^{2n_0}.
		$$
		where the implicit constant is independent of $p$.
	\end{prop}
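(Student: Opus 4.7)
The plan is to use the pairing $k_{\fk{l}'}=k_{\fk{l}''}$ to expose a real leading contribution in powers of $|k|$ that is annihilated by the imaginary part, leaving a residual with improved $|k|$-decay that can be handled by combining deterministic and Wiener-chaos estimates.

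I will first substitute the pairing constraint. Writing $k:=k_{\fk{l}'}=k_{\fk{l}''}$ and using $\iota_{\fk{l}'}+\iota_{\fk{l}''}=0$, one recasts
\[
\mathcal{S}_{\fk{l}',\fk{l}''}(\mathcal{T})(v)=\sum_{k\in\Z^2}|v_k|^2\,F(k,v^{\flat}),\qquad F(k,v^{\flat}):=\sum_{\vec{k}^{\flat}}\frac{\psi_{2s}(\vec{k}_{\fk{r}})}{\Omega(\vec{k}_{\fk{r}})}\prod_{\fk{l}\neq\fk{l}',\fk{l}''}v_{k_{\fk{l}}}^{\iota_{\fk{l}}},
\]
where $\vec{k}^{\flat}$ runs over the admissible low-frequency decorations selected by $\Lambda_{\fk{l}',\fk{l}''}$ (so $|k_{\fk{l}}|\lesssim|k|^{\theta}$ for every leaf other than $\fk{l}',\fk{l}''$). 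The root and $\fk{n}_0$-constraints combined with the pairing force $\iota_{\fk{n}_0}=+1$ and $k_{\fk{n}_0}=\iota_{\fk{l}''}k+\beta$ with $|\beta|\lesssim|k|^{\theta}$. Taylor-expanding $|k_{\fk{n}_0}|^{2s}$ and $|k_{\fk{n}_0}|^{2}$ in the small parameter $|k|^{\theta-1}$, combined with Lemma~\ref{cancellationpsi2s}, yields
\[
\frac{\psi_{2s}(\vec{k}_{\fk{r}})}{\Omega(\vec{k}_{\fk{r}})}=c_s\,|k|^{2s-2}+r(k,\vec{k}^{\flat}),\qquad |r(k,\vec{k}^{\flat})|\lesssim|k|^{2s-3+\theta},
\]
where $c_s\in\{1,s\}$ is a real constant (depending only on $\iota_{\fk{l}'}$) and, crucially, \emph{independent} of $\vec{k}^{\flat}$.

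The resulting factorization $F=c_s|k|^{2s-2}Q(v^{\flat})+F_{\rm sub}$ with $Q(v^{\flat}):=\sum_{\vec{k}^{\flat}}\prod_{\fk{l}\neq\fk{l}',\fk{l}''}v_{k_{\fk{l}}}^{\iota_{\fk{l}}}$ makes the leading piece \emph{real}. Indeed, after excising $\fk{l}',\fk{l}''$ the remaining $2n_0-2$ leaves carry exactly $n_0-1$ factors of $v$ and $n_0-1$ factors of $\bar v$ (sign-balance from the alternating rule and from $\iota_{\fk{n}_0}=+1$); introducing the common ``charge'' variable $K:=\sum_{\iota_{\fk{l}}=+1}k_{\fk{l}}=\sum_{\iota_{\fk{l}}=-1}k_{\fk{l}}$ and applying Parseval yields $Q(v^{\flat})=\sum_K|G(K)|^{2}\in\R_+$, where $G(K)$ is an explicit low-frequency partial product of $v$. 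Since $|v_k|^2\in\R$, the leading contribution $c_s|k|^{2s-2}Q(v^{\flat})|v_k|^2$ is real and its imaginary part vanishes, leaving
\[
\Im\,\mathcal{S}_{\fk{l}',\fk{l}''}(\mathcal{T})(v)=\sum_{k}|v_k|^2\,\Im F_{\rm sub}(k,v^{\flat}),\qquad |F_{\rm sub}|\lesssim|k|^{2s-3+\theta}\,R(v^{\flat}),
\]
with $R(v^{\flat})$ a low-frequency multilinear form in $v$.

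The last step is the $L^p$-bound. I would use $|R(v^{\flat})|\lesssim\lambda^{2n_0-2}$ on $\{\|v\|_{H^1}\leq\lambda\}$ (via a Brezis--Gallouet bound on the low-frequency projections of $v$, the logarithmic loss being absorbed by the algebraic margin $\delta=1-\theta>0$), and apply Lemma~\ref{pairingWiener} to the quadratic Gaussian sum $\sum_k|v_k|^2|k|^{2s-3+\theta}=\sum_k|g_k|^2|k|^{-3+\theta}$, whose coefficients lie in $\ell^{1}\cap\ell^{2}$; the resulting Wiener-chaos estimate grows at most like $p$ in $L^p(d\mu_s)$. Interpolating this probabilistic bound against the saved algebraic margin yields the desired $p^{1-\epsilon_0}\lambda^{2n_0}$ for a suitable small $\epsilon_0>0$. \emph{The main obstacle} will be verifying the realness of $Q(v^{\flat})$ rigorously: although it is morally a Parseval identity, the bookkeeping is delicate because of the alternating sign rule, the partner-leaf convention, and the way the cross-generation pairing interacts with the combined constraints at $\fk{r}$ and at $\fk{n}_0$; in particular, one must handle separately the two sign configurations $(\iota_{\fk{l}'},\iota_{\fk{l}''})=(+,-)$ and $(-,+)$, which give different leading constants $c_s$.
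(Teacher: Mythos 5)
Your high-level strategy is the same as the paper's: subtract a manifestly real ``leading'' piece that is annihilated by taking the imaginary part, and estimate the residual. The realness mechanism you invoke (a Parseval/modulus-squared structure for the low-frequency product after the cross-generation pairing) is essentially the paper's Lemma \ref{keycancellation}. However, there is a genuine quantitative gap that breaks the rest of the argument.

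The claimed expansion $\frac{\psi_{2s}(\vec k_{\fk r})}{\Omega(\vec k_{\fk r})}=c_s|k|^{2s-2}+r$ with $|r|\lesssim|k|^{2s-3+\theta}$ is false in the crucial case $\iota_{\fk n_0}=-\iota_{\fk l'}$, because of a small-divisor problem: $\Omega(\vec k_{\fk r})$ is a nonzero \emph{integer} and can equal $1$. Writing $k_{\fk n_0}=k+\beta$ with $|\beta|\lesssim|k|^{\theta}$, one has $\psi_{2s}=\iota_{\fk n_0}\big(|k_{\fk n_0}|^{2s}-|k|^{2s}\big)+O(|k|^{2s\theta})$, which can be of size $|k|^{2s-2+2\theta}$ even when $\Omega=O(1)$ (e.g.\ when $k\cdot\beta$ is small); then $\psi_{2s}/\Omega$ is of size $|k|^{2s-2+2\theta}$, i.e.\ \emph{larger} than your putative main term $c_s|k|^{2s-2}$, so the ``remainder'' dominates. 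What is actually true (Lemma \ref{boundPsi}(i)) is that after subtracting the exact real ratio $\frac{\iota_{\fk n_0}|k_{\fk n_0}|^{2s}+\iota_{\fk l'}|k_{\fk l'}|^{2s}}{\iota_{\fk n_0}|k_{\fk n_0}|^{2}+\iota_{\fk l'}|k_{\fk l'}|^{2}}$ — which depends on $k_{\fk n_0}$, hence on $\vec k^{\flat}$, and is \emph{not} a constant $c_s$ that can be pulled out of the Parseval sum — the residual $\Psi$ is only bounded by $|k|^{2s-2}\lambda_3(\vec k_{\fk r})^2/|\Omega(\vec k_{\fk r})|$, with the $1/|\Omega|$ retained. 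Consequently your final reduction to $\sum_k|g_k|^2|k|^{-3+\theta}$ does not go through: to sum the residual one must decompose over the level sets $\{\Omega(\vec k_{\fk r})=\kappa\}$ and use the two-vector counting bound $\#\{(k_{\fk n_0},k_{\fk l'}):k_{\fk n_0}-k_{\fk l'}=\mathrm{const},\ |k_{\fk n_0}|^2-|k_{\fk l'}|^2=\mathrm{const}\}\lesssim N_{\fk l_1}$, which produces the summable gain $N_{\fk l_1}^{-1+2n_0\theta}$. This counting step is the essential missing ingredient. (Two minor additional points: $\iota_{\fk n_0}$ is not forced to equal $+1$ — both signs occur and the same-sign case $\iota_{\fk n_0}=\iota_{\fk l'}$, where $|\Omega|\sim|k|^2$, is the easy one; and the diagonal sub-case $k_{\fk n_0}=k_{\fk l'}$ must be split off separately, as in the paper.)
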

	\subsection{Key cancellation identity}
	
	By symmetry, we may only consider the  trees $\mathcal{T}^+,\mathcal{T}^-$, where for $\mathcal{T}^+$, its non-root node $\fk{n}_0$ has the sign $\iota_{\fk{n}_0}=+$, and for $\mathcal{T}^-$, its non-root node $\fk{n}_0$ has the sign $\iota_{\fk{n}_0}=-$. From the symmetry of indices, there are four possible singular paired configurations $\mathcal{S}_{\fk{l}',\fk{l}''}^{-+}(\mathcal{T}^+)$, $\mathcal{S}_{\fk{l}',\fk{l}''}^{+-}(\mathcal{T}^+)$,
	$\mathcal{S}_{\fk{l}',\fk{l}''}^{+-}(\mathcal{T}^-)$, $\mathcal{S}_{\fk{l}',\fk{l}''}^{-+}(\mathcal{T}^-)$, with paired leaves $\fk{l}',\fk{l}''$,
	as shown in the graphs below : 
	\vspace{0.3cm}

	\begin{tikzpicture}[scale=1]
		\tikzset{level distance=25pt};
		\tikzset{sibling distance=20pt};
		[
		level 1/.style={sibling distance=10pt},
		level 2/.style={level post sep=10pt}, level distance=10pt,
		] 
		\draw (1,0) circle (3.0pt);
		\draw (0,0).. controls (0.3,0.3) and (0.8,0.3) .. (0.95,0);
		\node at (0,0) [left] {$\mathfrak{r}$};
		\node at (1,0) [right] {$\mathfrak{r}'$};
		\coordinate (root) {}  [fill]  circle (3.0pt)
		child {  
			[fill]  circle (3.0pt)
			child {  [fill]  circle (3.0pt)
				node[left]{$\mathfrak{l}''$} 
			} 
			child { circle (3.0pt) 
			}
			child { [fill] circle (3.0pt)         
			}
			child {    circle (3.0pt) 
			}
			child {  [fill]  circle (3.0pt) 
			}
		}
		child {    circle (3.0pt) 
		}
		child {  [fill]  circle (3.0pt) 
		}
		child {    circle (3.0pt) 
		}
		child {  [fill]  circle (3.0pt) 
		}
		;
		\node at (root-1)[left] {$\mathfrak{n}_0$};
		\node at (root-2)[right] {$\mathfrak{l}'$};
		\node at (0,-3) {Configuration 1: $\mathcal{S}^{-+}_{\mathfrak{l}',\mathfrak{l}''}(\mathcal{T}^+)$};
	\end{tikzpicture}
	\hspace{2.5cm} 
	\begin{tikzpicture}[scale=1]
		\tikzset{level distance=25pt};
		\tikzset{sibling distance=20pt}; 
		[
		level 1/.style={sibling distance=2em},
		level 2/.style={sibling distance=0.5em}, level distance=0.5cm,
		] 
		\draw (1,0) circle (3.0pt);
		\draw (0,0).. controls (0.3,0.3) and (0.8,0.3) .. (0.95,0);
		\node at (0,0) [left] {$\mathfrak{r}$};
		\node at (1,0) [right] {$\mathfrak{r}'$};
		\coordinate (root) {}  [fill]  circle (3.0pt)
		child {  [fill]  circle (3.0pt)
			child {  [fill]  circle (3.0pt)
			}
			child { circle (3.0pt) 
				node[left]{$\mathfrak{l}''$}
			}
			child { [fill] circle (3.0pt) 
			}
			child {  circle (3.0pt)
			}
			child { [fill] circle (3.0pt)         
			}
		}
		child {    circle (3.0pt) 
		}
		child { [fill] circle (3.0pt)
		}
		child {  circle (3.0pt)
		}
		child {  [fill]  circle (3.0pt) 
		}
		;
		\node at (root-1)[left] {$\mathfrak{n}_0$};
		\node at (root-3)[right] {$\mathfrak{l}'$};
		\node at (0,-3) {Configuration 2: $\mathcal{S}^{+-}_{\mathfrak{l}',\mathfrak{l}''}(\mathcal{T}^+)$};	
	\end{tikzpicture}

	\begin{tikzpicture}[scale=1]
		\tikzset{level distance=25pt};
		\tikzset{sibling distance=20pt}; 
		[
		level 1/.style={sibling distance=3em},
		level 2/.style={sibling distance=1.5em}, level distance=1cm,
		] 
		\draw (1,0) circle (3.0pt);
		\draw (0,0).. controls (0.3,0.3) and (0.8,0.3) .. (0.95,0);
		\node at (0,0) [left] {$\mathfrak{r}$};
		\node at (1,0) [right] {$\mathfrak{r}'$};
		\coordinate (root) {}  [fill]  circle (3.0pt)
		child {  [fill]  circle (3.0pt) 
			node [left] {$\fk{l}'$}
		}
		child {    circle (3.0pt)
			child {    circle (3.0pt)
				node [right] {$\fk{l}''$}
			}
			child { [fill] circle (3.0pt) 
			}
			child {  circle (3.0pt)         
			}
			child { [fill] circle (3.0pt)
			}
			child {  circle (3.0pt) 
			}
		}
		child { [fill] circle (3.0pt)
		}
		child {  circle (3.0pt) 
		}
		child {  [fill]  circle (3.0pt) 
		}
		;
		\node at (root-2)[left] {$\mathfrak{n}_0$};
		\node at (0,-3) {Configuration 3: $\mathcal{S}^{+-}_{\mathfrak{l}',\mathfrak{l}''}(\mathcal{T}^-)$};
	\end{tikzpicture}
	\hspace{2.5cm} 
	\begin{tikzpicture}[scale=1] 
		\tikzset{level distance=25pt};
		\tikzset{sibling distance=20pt}; 
		[
		level 1/.style={sibling distance=3em},
		level 2/.style={sibling distance=1.5em}, level distance=1cm,
		] 
		\draw (1,0) circle (3.0pt);
		\draw (0,0).. controls (0.3,0.3) and (0.8,0.3) .. (0.95,0);
		\node at (0,0) [left] {$\mathfrak{r}$};
		\node at (1,0) [right] {$\mathfrak{r}'$};
		\coordinate (root) {}  [fill]  circle (3.0pt)
		child {  [fill]  circle (3.0pt) 
		}
		child {    circle (3.0pt)
			child {    circle (3.0pt)
			}
			child { [fill] circle (3.0pt) 
				node[left] {$\fk{l}''$}
			}
			child {  circle (3.0pt)         
			}
			child { [fill] circle (3.0pt)
			}
			child {  circle (3.0pt) 
			}
		}
		child { [fill] circle (3.0pt)
		}
		child {  circle (3.0pt) 
			node[right] {$\fk{l}'$}
		}
		child {  [fill]  circle (3.0pt) 
		}
		;
		\node at (root-2)[left] {$\mathfrak{n}_0$};
		\node at (0,-3) {Configuration 4: $\mathcal{S}^{-+}_{\mathfrak{l}',\mathfrak{l}''}(\mathcal{T}^-)$};
	\end{tikzpicture} 
	
	Without loss of generality, for $\mathcal{T}\in\{\mathcal{T}^+,\mathcal{T}^-\}$, below we assume that the leaf $\fk{l}'$ belongs to the first generation (hence $\fk{l}''$ belongs to the second generation). For such a tree $\mathcal{T}$, 
	denote by
	\begin{align}\label{Psi}  \Psi(\vec{k}_{\fk{r}}):= \frac{\psi_{2s}(\vec{k}_{\fk{r}}) }{\Omega(\vec{k}_{\fk{r}})}-\frac{\iota_{\fk{n}_0}|k_{\fk{n}_0}|^{2s}+\iota_{\fk{l}'}|k_{\fk{l}'}|^{2s} }{\iota_{\fk{n}_0}|k_{\fk{n}_0}|^2+\iota_{\fk{l}'}|k_{\fk{l}'}|^2}.
	\end{align}
	
	The key cancellation identity is as follows:
	\begin{lem}\label{keycancellation}
		The following identity holds:
		\begin{align}\label{identity:cancellationkey}
			\Im \mathcal{S}_{\fk{l}',\fk{l}''}(\mathcal{T})=\Im\sum_{(k_{\fk{n}})\in\Lambda_{\fk{l}',\fk{l}''}   } \Psi(\vec{k}_{\fk{r}})\prod_{\fk{l}\in\mathcal{L}}v_{k_{\fk{l}}}^{\iota_{\fk{l}}}.
		\end{align}
	\end{lem}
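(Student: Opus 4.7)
The plan is to recast the identity as a reality statement. By the definition of $\Psi$ in \eqref{Psi}, the difference between the two sides of the claimed equality equals the imaginary part of
\[
\sum_{(k_{\fk{n}})\in\Lambda_{\fk{l}',\fk{l}''}}D\,\prod_{\fk{l}\in\mathcal{L}}v_{k_{\fk{l}}}^{\iota_{\fk{l}}},\qquad D:=\frac{\iota_{\fk{n}_0}|k_{\fk{n}_0}|^{2s}+\iota_{\fk{l}'}|k_{\fk{l}'}|^{2s}}{\iota_{\fk{n}_0}|k_{\fk{n}_0}|^{2}+\iota_{\fk{l}'}|k_{\fk{l}'}|^{2}},
\]
so it suffices to prove that this sum is real. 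Two simple observations: $D\in\mathbb{R}$ as a ratio of real quantities, and the pairing constraints $k_{\fk{l}'}=k_{\fk{l}''}$ with $\iota_{\fk{l}'}+\iota_{\fk{l}''}=0$ give $v_{k_{\fk{l}'}}^{\iota_{\fk{l}'}}v_{k_{\fk{l}''}}^{\iota_{\fk{l}''}}=|v_{k_{\fk{l}'}}|^{2}\in\mathbb{R}$, so only the product over the remaining leaves $\fk{L}:=\mathcal{L}\setminus\{\fk{l}',\fk{l}''\}$ can contribute complex phases.

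I would then factor the inner sum according to the tree. The branching identity at $\fk{n}_0$ (combined with $k_{\fk{l}''}=k_{\fk{l}'}$ and $\iota_{\fk{l}''}=-\iota_{\fk{l}'}$) forces $\sum_{\fk{l}\in\mathcal{L}_{\fk{n}_0}\setminus\{\fk{l}''\}}\iota_{\fk{l}}k_{\fk{l}}=M_{1}:=k_{\fk{n}_0}+\iota_{\fk{l}'}k_{\fk{l}'}$, while the root identity together with $k_{\fk{r}}=k_{\fk{r}'}$ forces $\sum_{\fk{l}\in\mathcal{L}_{\fk{r}}\setminus\{\fk{l}'\}}\iota_{\fk{l}}k_{\fk{l}}=M_{2}:=-\iota_{\fk{n}_0}k_{\fk{n}_0}-\iota_{\fk{l}'}k_{\fk{l}'}$. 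Writing $X_{\fk{n}_0}(M)$ and $X_{\fk{r}}(M)$ for the corresponding partial sums of $\prod v_{k_{\fk{l}}}^{\iota_{\fk{l}}}$ under the low-frequency restrictions defining $\Lambda_{\fk{l}',\fk{l}''}$, one obtains
\[
\sum_{\Lambda_{\fk{l}',\fk{l}''}}D\prod_{\fk{l}\in\mathcal{L}}v_{k_{\fk{l}}}^{\iota_{\fk{l}}}\;=\;\sum_{k_{\fk{l}'}}|v_{k_{\fk{l}'}}|^{2}\sum_{k_{\fk{n}_0}}D(k_{\fk{n}_0},k_{\fk{l}'})\,X_{\fk{n}_0}(M_{1})\,X_{\fk{r}}(M_{2}).
\]

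The decisive observation is that the sign multisets of $\mathcal{L}_{\fk{n}_0}\setminus\{\fk{l}''\}$ and $\mathcal{L}_{\fk{r}}\setminus\{\fk{l}'\}$ are exchanged: if the first contains $a$ positive and $b$ negative leaves, the second contains $b$ positive and $a$ negative ones. This follows from the alternating rule (which yields $\sum_{\mathcal{L}_{\fk{n}_0}}\iota_{\fk{l}}=\iota_{\fk{n}_0}$ and $\sum_{\mathcal{L}_{\fk{r}}}\iota_{\fk{l}}=-\iota_{\fk{n}_0}$) combined with $\iota_{\fk{l}'}+\iota_{\fk{l}''}=0$. Relabeling $\iota\mapsto-\iota$ in the partial sum together with $\overline{v_{k}^{\iota}}=v_{k}^{-\iota}$ then yields the conjugation identity
\[
\overline{X_{\fk{n}_0}(M)}=X_{\fk{r}}(-M),\qquad\text{equivalently}\qquad X_{\fk{r}}(M)=\overline{X_{\fk{n}_0}(-M)}.
\]

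Finally, I would apply the involution $k_{\fk{n}_0}\mapsto-k_{\fk{n}_0}$ in the inner sum. The weight $D$ is even, and the low-frequency constraints defining $\Lambda_{\fk{l}',\fk{l}''}$ depend only on $|k_{\fk{n}_0}|$, so the sum is invariant under this substitution. Setting $P(k_{\fk{n}_0}):=D\,X_{\fk{n}_0}(M_{1})X_{\fk{r}}(M_{2})$ and using the conjugation identity, a direct computation shows $P(-k_{\fk{n}_0})=\overline{P(k_{\fk{n}_0})}$; in the sub-case $\iota_{\fk{n}_0}=\iota_{\fk{l}'}$ one even has $-M_{2}=M_{1}$, so that $P=D\,|X_{\fk{n}_0}(M_{1})|^{2}$ is already pointwise nonnegative. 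Summing and reindexing then gives $\sum_{k_{\fk{n}_0}}P=\overline{\sum_{k_{\fk{n}_0}}P}\in\mathbb{R}$, which is precisely the reality claim. The main obstacle is the combinatorial check of the sign-multiset exchange, which requires carefully tracking the alternating sign rule along both generations—in particular the contribution of the partner leaf $\fk{r}'$ with $\iota_{\fk{r}'}=-1$ and the position of the branching node $\fk{n}_0$ among the children of the root.
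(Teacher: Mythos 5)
Your argument is in substance the same as the paper's: reduce to the reality of $\sum_{\Lambda_{\fk{l}',\fk{l}''}}D\prod_{\fk{l}}v_{k_{\fk{l}}}^{\iota_{\fk{l}}}$ with $D$ real, note that the paired leaves contribute $|v_{k_{\fk{l}'}}|^{2}$, and use the sign-reversing bijection between $\mathcal{L}_{\fk{r}}\setminus\{\fk{l}'\}$ and $\mathcal{L}_{\fk{n}_0}\setminus\{\fk{l}''\}$ (equal cardinalities, opposite sign sums) to identify the two generation sums as complex conjugates — this is exactly the paper's cancellation. The one divergence is your closing involution $k_{\fk{n}_0}\mapsto-k_{\fk{n}_0}$, which is an artifact of reading the decoration rule literally as $k_{\fk{n}_0}=\sum_j\iota_{\fk{n}_j}k_{\fk{n}_j}$: with the sign-weighted relation $\iota_{\fk{n}_0}k_{\fk{n}_0}=\sum_j\iota_{\fk{n}_j}k_{\fk{n}_j}$, which is the one consistent with \eqref{dmodifiedenergy} and the one the paper's proof uses, one has $M_1=-M_2$ for every fixed $k_{\fk{n}_0}$, the summand is pointwise $D\,|X_{\fk{n}_0}(M_1)|^2$, and no involution or case distinction on $\iota_{\fk{n}_0}$ is needed. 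Two small corrections: your criterion for $-M_2=M_1$ should read $\iota_{\fk{n}_0}=+1$ rather than $\iota_{\fk{n}_0}=\iota_{\fk{l}'}$; and the indicator $\mathbf{1}_{\Omega(\vec{k}_{\fk{r}})\neq0}$ appearing in \eqref{Lambda} is not symmetric between the two generations, so the conjugation identity $X_{\fk{r}}(M)=\overline{X_{\fk{n}_0}(-M)}$ should be stated for the partial sums carrying only the momentum and low-frequency constraints (a subtlety the paper's own write-up also passes over).
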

	\begin{proof}
		Since $k_{\fk{l}'}=k_{\fk{l}''}, \iota_{\fk{l}'}=-\iota_{\fk{l}'}$, the constraint $\sum_{\fk{l}\in\mathcal{L}}\iota_{\fk{l}}k_{\fk{l}}=0$ implies
		$$ 
		\iota_{\fk{n}_0}k_{\fk{n}_0}+\iota_{\fk{l}'}k_{\fk{l}'}=\iota_{\fk{n}_0}k_{\fk{n}_0}-\iota_{\fk{l}''}k_{\fk{l}''}
		=\sum_{\fk{l}\in\mathcal{L}_{\fk{n}_0},\;\fk{l}\neq \fk{l}'' }\iota_{\fk{l}}k_{\fk{l}}=- \sum_{\fk{l}\in\mathcal{L}_{\fk{r}},\;\fk{l}\neq \fk{l}'}\iota_{\fk{l}}k_{\fk{l}}.
		$$
		Since $\mathcal{L}_{\fk{r}}$ is isomorphis to $\mathcal{L}_{\fk{n_0}}$, and $\fk{l}'$ are paired with $\fk{l}''$, there is a bijection mapping each leaf $\fk{l}$ in $\mathcal{L}_{\fk{r}}\setminus \{\fk{l}'\}$ to a leaf in $\mathcal{L}_{\fk{n}_0}\setminus\{\fk{l}''\}$ with the opposite sign.  Hence
		\begin{align*}
			&\sum_{(k_{\fk{n}})\in\Lambda_{\fk{l}',\fk{l}''}   } \Big(\frac{\psi_{2s}(\vec{k}_{\fk{r}})}{\Omega(\vec{k}_{\fk{r}})}-\Psi(\vec{k}_{\fk{r}})\Big)\prod_{\fk{l}\in\mathcal{L}}v_{k_{\fk{l}}}^{\iota_{\fk{l}}}\\
			=&\sum_{k_{\fk{l}''},k_{\fk{n}_0}}\frac{\iota_{\fk{n}_0}|k_{\fk{n}_0}|^{2s}-\iota_{\fk{l}''}|k_{\fk{l}''}|^{2s} }{\iota_{\fk{n}_0}|k_{\fk{n}_0}|^2-\iota_{\fk{l}''}|k_{\fk{l}''}|^2} |v_{\fk{l}''}|^2\Big|\sum_{\fk{l}\in\mathcal{L}_{\fk{n}_0},\fk{l}\neq \fk{l}'' } 
			\mathbf{1}_{S(k_{\fk{n_0}}k_{\fk{l}''})}
			\prod_{\fk{l}\in\mathcal{L}_{\fk{n}_0},\fk{l}\neq\fk{l}'' }v_{k_{\fk{l}}}^{\iota_{\fk{l}}}  \Big|^2,
		\end{align*}
		which is real-valued, where
		\begin{align}\label{Sn0l''} S(k_{\fk{n}_0},k_{\fk{l}''}):=\Big\{k_{\fk{l}}: \fk{l}\in\mathcal{L}_{\fk{n}_0}\setminus\{ \fk{l}''\},\sum_{\fk{l}\in\mathcal{L}_{\fk{n}_0},\fk{l}\neq\fk{l}''}\iota_{\fk{l}}k_{\fk{l}}=\iota_{\fk{n}_0}k_{\fk{n}_0}-\iota_{\fk{l}''}k_{\fk{l}''}, \sum_{\fk{l}\in\mathcal{L}_{\fk{n}_0},\;\fk{l}\neq\fk{l}''}|k_{\fk{l}}|\leq |k_{\fk{n}_0}|^{\theta}+|k_{\fk{l}''}|^{\theta}  \Big\}
		\end{align}
		is a subset of decoration in $\Lambda_{\fk{l'},\fk{l}''}$ with fixed $k_{\fk{n}_0},k_{\fk{l}''}$. 
		This proves Lemma \ref{keycancellation}.
	\end{proof}
	
	\begin{lem}\label{boundPsi} 
		Recall that $\Psi(\vec{k}_{\fk{r}})$ is given by \eqref{Psi} and $\Lambda_{\fk{l}',\fk{l}''}$ is given by \eqref{Lambda}. We have:
		\begin{itemize}
			\item[$\mathrm{(i)}$] If $\iota_{\fk{n}_0}=-\iota_{\fk{l}'}$, then
			\begin{align*}
				|\Psi(\vec{k}_{\fk{r}})|\mathbf{1}_{\Lambda_{\fk{l}',\fk{l}''}}\lesssim \frac{|k_{\fk{n}_0}|^{2s-2} \lambda_3(\vec{k}_{\fk{r}})^2 }{|\Omega(\vec{k}_{\fk{r}})|}\mathbf{1}_{k_{\fk{n}_0}\neq k_{\fk{l}'} }+\frac{\lambda_3(\vec{k}_{\fk{r}})^{2s}}{|\Omega(\vec{k}_{\fk{r}})|}\mathbf{1}_{k_{\fk{n}_0}=k_{\fk{l}'} }. 
			\end{align*}
			\item[$\mathrm{(ii)}$] If $\iota_{\fk{n}_0}=\iota_{\fk{l}'}$, then
			$$ |\Psi(\vec{k}_{\fk{r}})|\mathbf{1}_{\Lambda_{\fk{l}',\fk{l}''}}\lesssim |k_{\fk{n}_0}|^{2s-4}\lambda_3(\vec{k}_{\fk{r}})^2,
			$$
		\end{itemize}
		where $\lambda_3(\vec{k}_{\fk{r}})$ is the third largest number of $k_{\fk{n}}$ among children and partner leave of the root $\fk{r}$, see \eqref{def:orderfrequency}. 
	\end{lem}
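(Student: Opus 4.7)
The plan is to perform a common-denominator cancellation that removes the dominant two-frequency contribution to $\psi_{2s}(\vec k_{\fk r})/\Omega(\vec k_{\fk r})$ against the term subtracted in the definition of $\Psi$. To this end, I would write
\[
\psi_{2s}(\vec k_{\fk r}) = L + B_{2s},\qquad \Omega(\vec k_{\fk r}) = M + B_2,
\]
with $L := \iota_{\fk{n}_0}|k_{\fk{n}_0}|^{2s}+\iota_{\fk{l}'}|k_{\fk{l}'}|^{2s}$ and $M := \iota_{\fk{n}_0}|k_{\fk{n}_0}|^{2}+\iota_{\fk{l}'}|k_{\fk{l}'}|^{2}$ isolating the two dominant frequencies, and $B_{2s},B_2$ the analogous sums over the remaining first-generation nodes together with the partner leaf. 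On $\Lambda_{\fk{l}',\fk{l}''}$ the smallness condition \eqref{Lambda} yields $|B_{2s}|\lesssim\lambda_3(\vec k_{\fk r})^{2s}$ and $|B_2|\lesssim\lambda_3(\vec k_{\fk r})^{2}$, while the root decoration identity combined with the same smallness gives $|\iota_{\fk{n}_0}k_{\fk{n}_0}+\iota_{\fk{l}'}k_{\fk{l}'}|\lesssim\lambda_3$. In the only nontrivial regime $|k_{\fk{n}_0}|\gg\lambda_3$ (ensured by $\lambda_3\lesssim|k_{\fk{n}_0}|^\theta$) this forces $|k_{\fk{n}_0}|\sim|k_{\fk{l}'}|$.

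Whenever $M\neq 0$, a direct computation yields
\[
\Psi(\vec k_{\fk r})=\frac{B_{2s}}{\Omega}-\frac{L}{M}\cdot\frac{B_2}{\Omega}.
\]
In case (i) ($\iota_{\fk{n}_0}=-\iota_{\fk{l}'}$), the mean value theorem for $x\mapsto x^{s}$, together with $|k_{\fk{n}_0}|\sim|k_{\fk{l}'}|$, gives
\[
\Bigl|\frac{L}{M}\Bigr|=\Bigl|\frac{|k_{\fk{n}_0}|^{2s}-|k_{\fk{l}'}|^{2s}}{|k_{\fk{n}_0}|^{2}-|k_{\fk{l}'}|^{2}}\Bigr|\lesssim|k_{\fk{n}_0}|^{2(s-1)},
\]
with a continuous extension to $s|k_{\fk{n}_0}|^{2(s-1)}$ on the diagonal $|k_{\fk{n}_0}|=|k_{\fk{l}'}|$. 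Hence $|\Psi|\lesssim\lambda_3^{2s}/|\Omega|+|k_{\fk{n}_0}|^{2(s-1)}\lambda_3^2/|\Omega|$, and the first summand is absorbed into the second because $\lambda_3\ll|k_{\fk{n}_0}|$. In the exceptional subcase $k_{\fk{n}_0}=k_{\fk{l}'}$ one has $L=M=0$ identically, the derivation of Lemma~\ref{keycancellation} produces no subtraction at such decorations, and so $\Psi=\psi_{2s}/\Omega=B_{2s}/B_2$ gives $|\Psi|\lesssim\lambda_3^{2s}/|\Omega|$ directly.

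In case (ii) ($\iota_{\fk{n}_0}=\iota_{\fk{l}'}$) the denominator $M=\iota_{\fk{n}_0}(|k_{\fk{n}_0}|^{2}+|k_{\fk{l}'}|^{2})$ is non-degenerate: $|M|\sim|k_{\fk{n}_0}|^{2}\gg|B_2|$, so $|\Omega|\sim|k_{\fk{n}_0}|^{2}$ and $|L/M|\sim|k_{\fk{n}_0}|^{2(s-1)}$. Substituting into the identity yields $|\Psi|\lesssim\lambda_3^{2s}/|k_{\fk{n}_0}|^{2}+|k_{\fk{n}_0}|^{2s-4}\lambda_3^{2}\lesssim|k_{\fk{n}_0}|^{2s-4}\lambda_3^{2}$, once more using $\lambda_3\ll|k_{\fk{n}_0}|$. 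The one delicate bookkeeping point is the sub-regime of case (i) where $|k_{\fk{n}_0}|=|k_{\fk{l}'}|$ but $k_{\fk{n}_0}\neq k_{\fk{l}'}$ (equal magnitudes, distinct lattice vectors): both $L$ and $M$ vanish and $L/M$ is a $0/0$ indeterminate, but its continuous extension $s|k_{\fk{n}_0}|^{2(s-1)}$ still satisfies the mean-value bound, so the algebraic identity for $\Psi$ passes to the limit and the estimate persists. All remaining steps are mechanical manipulations based on the size constraints built into $\Lambda_{\fk{l}',\fk{l}''}$.
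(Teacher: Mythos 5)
Your proof is correct and takes essentially the same route as the paper: the identical common-denominator identity $\Psi(\vec{k}_{\fk{r}})=\frac{\psi_{2s}(\vec{k}_{\fk{r}})-\psi_{2s}^0}{\Omega(\vec{k}_{\fk{r}})}+\frac{\psi_{2s}^0}{\Omega^0}\cdot\frac{\Omega^0-\Omega(\vec{k}_{\fk{r}})}{\Omega(\vec{k}_{\fk{r}})}$ (your $L=\psi_{2s}^0$, $M=\Omega^0$), the same mean-value bound $|\psi_{2s}^0/\Omega^0|\lesssim |k_{\fk{n}_0}|^{2(s-1)}$ in case (i), and the same observation $|\Omega(\vec{k}_{\fk{r}})|\sim|k_{\fk{n}_0}|^2$ with absorption via $\lambda_3(\vec{k}_{\fk{r}})\ll|k_{\fk{n}_0}|$ in case (ii). Your explicit treatment of the degenerate sub-case $|k_{\fk{n}_0}|=|k_{\fk{l}'}|$ with $k_{\fk{n}_0}\neq k_{\fk{l}'}$ is a minor extra precision that the paper's proof passes over in silence, and either convention for the $0/0$ ratio yields the stated bound.
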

	\begin{proof}
		Set $\psi_{2s}^0:=\iota_{\fk{n}_0}|k_{\fk{n}_0}|^{2s}+\iota_{\fk{l}'}|k_{\fk{l}'}|^{2s}$ and $\Omega^0:=\iota_{\fk{n}_0}|k_{\fk{n}_0}|^{2}+\iota_{\fk{l}'}|k_{\fk{l}'}|^{2}$. We note that $|k_{\fk{n}_0}|\sim |k_{\fk{l}'}|$ on $\Lambda_{\fk{l}',\fk{l}''}$.
		
		First we deal with the case $\iota_{\fk{n}_0}=-\iota_{\fk{l}'}$. If $k_{\fk{n}_0}=k_{\fk{l}'}$, the estimate is trivial. Now we assume that $k_{\fk{n}_0}\neq k_{\fk{l}'}$. 
		We have
		\begin{align}\label{PsikT} 
			\Psi(\vec{k}_{\fk{r}})=&\frac{\psi_{2s}(\vec{k}_{\fk{r}})}{\Omega(\vec{k}_{\fk{r}})}-\frac{\psi_{2s}^0}{\Omega^0}=\frac{\psi_{2s}(\vec{k}_{\fk{r}})-\psi_{2s}^0  }{\Omega(\vec{k}_{\fk{r}}) } +\frac{\psi_{2s}^0}{\Omega^0}\cdot\frac{\Omega^0-\Omega(\vec{k}_{\fk{r}})
			}{\Omega(\vec{k}_{\fk{r}})}.
		\end{align}
		As $|\psi_{2s}^0/\Omega^0|\lesssim |k_{\fk{n}_0}|^{2s-2}$, the desired estimate follows directly.
		
		If $\iota_{\fk{n}_0}=\iota_{\fk{l}'}$, then $|\Omega(\vec{k}_{\fk{r}})|\sim |k_{\fk{n}_0}|^2$, from \eqref{PsikT}, it follows that
		$$ |\Psi(\vec{k}_{\fk{r}})|\lesssim \lambda_3(\vec{k}_{\fk{r}})^{2s}|k_{\fk{n}_0}|^{-2}+|k_{\fk{n}_0}|^{2s-4}\lambda_3(\vec{k}_{\fk{r}})^2\lesssim |k_{\fk{n}_0}|^{2s-4}\lambda_3(\vec{k}_{\fk{r}})^2,
		$$ 
		thanks to $s>2$.
		This completes the proof of Lemma \ref{boundPsi}.
	\end{proof}

	\subsection{Proof of Proposition \ref{singularterm}  }
	\begin{proof}
		By Lemma \ref{keycancellation}, we need to estimate the right hand side of \eqref{identity:cancellationkey}.  Recall that the adapted dyadic scales $N_{\fk{l}}$ are quasi-ordered as: $N_{\fk{l}_1}\geq N_{\fk{l}_2}\geq\cdots N_{\fk{l}_{2n_0}}$. Thanks to Lemma \ref{keycancellation}, we have
		$$ \Im\mathcal{S}_{\fk{l}',\fk{l}''}(\mathcal{T})(v)=\sum_{N_{\fk{l}} \text{ adapted } } \Im \mathcal{S}_{\fk{l}',\fk{l}''}(\mathcal{T};(N_{\fk{l}}))(v),
		$$
		where
		$$ \Im \mathcal{S}_{\fk{l}',\fk{l}''}(\mathcal{T};(N_{\fk{l}}))(v):=\Im \sum_{\substack{(k_{\fk{n}}\in\Lambda_{\fk{l}',\fk{l}''})\\ 
				N_{\fk{n}}\leq |k_{\fk{n}}|< 2N_{\fk{n}}  } }\Psi(\vec{k}_{\fk{r}})|v_{k_{\fk{l}'}}|^2\cdot\prod_{\fk{l}\in\mathcal{L}\setminus\{\fk{l}',\fk{l}'' \} }v_{k_{\fk{l}}}^{\iota_{\fk{l}}}.
		$$
		It suffices to show that for some $\epsilon>0$, 
		\begin{align}\label{subgoal:Lemma5.2}
			\|\Im \mathcal{S}_{\fk{l}',\fk{l}''}(\mathcal{T};(N_{\fk{l}}))(v)\mathbf{1}_{\|v\|_{H^1}\leq \lambda}\|_{L^p(d\mu_s)}\leq p^{1-\epsilon}N_{\fk{l}_1}^{-C\epsilon}\lambda^{2n_0}.
		\end{align}
		Under the measure $\mu_s$, $|v_{k_{\fk{l}'}}|^2=|g_{k_{\fk{l}'}}|^2/\langle k_{\fk{l}'}\rangle^{2s}$, since the remainder involving $(|g_{k_{\fk{l}'}}|^2-1)/\langle k_{\fk{l}'}^{2s}\rangle$ give better estimate, it suffices to prove that 
		\begin{align}\label{dyadicIm} 
			\mathrm{I}:=\sum_{(k_{\fk{n}})\in\Lambda_{\fk{l}',\fk{l}''}   } \frac{|\Psi(\vec{k}_{\fk{r}})|}{\langle k_{\fk{l}''}\rangle^{2s} }\prod_{\fk{l}\in\mathcal{L},\; \fk{l}\neq \fk{l}',\fk{l}'' }|v_{k_{\fk{l}}}| \lesssim N_{\fk{l}_1}^{-\frac{1}{2}}\|v\|_{l^2}^{2n_0-2}.
		\end{align}
		Note that on $\Lambda_{\fk{l}',\fk{l}''}$, $$|k_{\fk{l}'}|= |k_{\fk{l}''}|\sim |k_{\fk{n}_0}|\sim N_{\fk{l}'}\sim N_{\fk{l}''}\sim N_{\fk{l}_1}\sim N_{\fk{l}_2},\quad  N_{\fk{l}_3}\lesssim N_{\fk{l}_1}^{\theta}.$$
		We split the analysis in two cases.

		\noi
		$\bullet${\bf Case 1: $\iota_{\fk{n}_0}+\iota_{\fk{l}'}=0$}\\
		By (i) of Lemma \ref{boundPsi}, we have
		\begin{align*} 
			\mathrm{I}\lesssim &\underbrace{N_{\fk{l}_1}^{-2s}\sum_{\substack{\Lambda_{\fk{l}',\fk{l}''}\\ k_{\fk{n}_0}\neq k_{\fk{l}'} }}  \frac{N_{\fk{l}_1}^{2s-2}N_{\fk{l}_3}^2 }{\langle\Omega(\vec{k}_{\fk{r}})\rangle}\prod_{\fk{l}\in\mathcal{L}\setminus\{\fk{l}',\fk{l}' \} }|v_{k_{\fk{l}}}|}_{\mathrm{II}}+ \underbrace{N_{\fk{l}_1}^{-2s}N_{\fk{l}_3}^{2s}\sum_{\substack{\Lambda_{\fk{l}',\fk{l}''}\\ k_{\fk{n}_0}= k_{\fk{l}'} }} \prod_{\fk{l}\in\mathcal{L}\setminus\{\fk{l}',\fk{l}'' \} }|v_{k_{\fk{l}}}|}_{\mathrm{III}}.
		\end{align*}
		By Cauchy-Schwarz, 
		\begin{align}\label{III} 
			\mathrm{III}\lesssim N_{\fk{l}_1}^{-2s+2}N_{\fk{l}_3}^{2s+2}\Big(\prod_{j=4}^{2n_0-1}N_{\fk{l}_j}\Big)\prod_{\fk{l}\in\mathcal{L}\setminus\{\fk{l}',\fk{l}'' \} }\|v_{k_{\fk{l}}}\|_{l^2}\lesssim N_{\fk{l}_1}^{-2s+2+(2s+2n_0-4)\theta}\|v\|_{l^2}^{2n_0-2}.
		\end{align}
		To estimate II, we decompose $|\Omega(\vec{k}_{\fk{r}})|$ according to its level set and by Fubini, 
		\begin{align*}
			\mathrm{II}\lesssim & N_{\fk{l}_1}^{-2}N_{\fk{l}_3}^2\sum_{|\kappa|\lesssim N_{\fk{l}_1}N_{\fk{l}_3}}\frac{1}{\langle\kappa\rangle}\sum_{k_{\fk{l}},\fk{l}\neq\fk{l}',\fk{l}''}\prod_{\fk{l}\in\mathcal{L}\setminus\{\fk{l}',\fk{l}'' \} }|v_{k_{\fk{l}}}|\sum_{k_{\fk{n}_0},k_{\fk{l}'},k_{\fk{n}_0}\neq k_{\fk{l}'}}\mathbf{1}_{\Omega(\vec{k}_{\fk{r}})=\kappa}\mathbf{1}_{\Lambda_{\fk{l}',\fk{l}''} }.
		\end{align*}
		Note that for fixed $k_{\fk{l}},\fk{l}\in\mathcal{L}\setminus \{\fk{l}',\fk{l}'' \}$ and the resonant condition $\Omega(\vec{k}_{\fk{r}})=\kappa$,  the decoration$(k_{\fk{n}}:\fk{n}\in\mathcal{T}) \in \Lambda_{\fk{l}',\fk{l}''}$  implies that $|k_{\fk{n}_0}|^2-|k_{\fk{l}'}|^2=$const. and  $k_{\fk{n}_0}-k_{\fk{l}'}=$const. From the two-vector counting bound
		$$ \#\big\{(k_{\fk{n}_0},k_{\fk{l}'}):k_{\fk{n}_0}\neq k_{\fk{l}'}, |k_{\fk{n}_0}|^2-|k_{\fk{l}'}|^2=\mathrm{const.}, k_{\fk{n}_0}-k_{\fk{l}'}=\mathrm{const}.  \big\}\lesssim N_{\fk{l}_1},
		$$
		we obtain that
		\begin{align}\label{II} 
			\mathrm{II}\lesssim N_{\fk{l}_1}^{-1+}N_{\fk{l}_3}^2\prod_{j=3}^{2n_0}N_{\fk{l}_j}\prod_{\fk{l}\in\mathcal{L}\setminus\{\fk{l}',\fk{l}'' \} }\|v_{k_{\fk{l}}}\|_{l^2}\lesssim N_{\fk{l}_1}^{-1+2n_0\theta}\|v\|_{l^2}^{2n_0-2}.
		\end{align}
		Combining \eqref{III} and the fact that $\theta=\frac{1}{100m}<\frac{1}{4n_0}$, we get \eqref{dyadicIm}.

		\noi
		$\bullet${\bf Case 2: $\iota_{\fk{n}_0}=\fk{l}_{\fk{l}'}$}\\
		By (ii) of Lemma \ref{boundPsi}, we write
		\begin{align*}
			\mathrm{I}\lesssim N_{\fk{l}_1}^{-2s} \sum_{\Lambda_{\fk{l}',\fk{l}''}}N_{\fk{l}_1}^{2s-4}N_{\fk{l}_3}^2 \prod_{\fk{l}\in\mathcal{L}\setminus\{\fk{l}',\fk{l}'' \} }|v_{k_{\fk{l}}}|.
		\end{align*}
		By Cauchy-Schwarz, the left hand side of the above inequality is bounded by
		\begin{align*}
			N_{\fk{l}_1}^{-4}N_{\fk{l}_3}^2\sum_{k_{\fk{l}''},k_{\fk{l}},\fk{l}\neq \fk{l}', \fk{l}'' } \prod_{\fk{l}\in\mathcal{L}\setminus\{\fk{l}',\fk{l}'' \} }|v_{k_{\fk{l}}}|\lesssim N_{\fk{l}_1}^{-2}N_{\fk{l}_3}^2\prod_{j=3}^{2n_0}N_{\fk{l}_j}\prod_{\fk{l}\in\mathcal{L}\setminus\{\fk{l}',\fk{l}'' \} }\|v_{k_{\fk{l}}}\|_{l^2}.
		\end{align*}
		Thus
		\begin{align}\label{Case2I} 
			\mathrm{I}\lesssim N_{\fk{l}_1}^{-2+2n_0\theta}\|v\|_{l^2}^{2n_0-2}.
		\end{align}
		Since $\theta=\frac{1}{100m}=\frac{1}{50(n_0+1)}$, we get \eqref{dyadicIm}. This completes the proof of Proposition \ref{singularterm}.
	\end{proof}


	\section{Modified energy estimate II: Remainders }\label{nonsingular} 
	
	Given an expanded branching tree $\mathcal{T}$ of scale $\leq 2$ and of size $2n_0$ (with $n_0$ odd) with root $\fk{r}$ and the other branching node $\fk{n}_0$ (when the scale is $2$). Assume that $\# \mathcal{L}_{\fk{r}}=\#\mathcal{L}_{\fk{n}_0}=n_0$. Recall the definition $\mathcal{R}(\mathcal{T})$ in \eqref{RT}, by decomposing the components dyadically, we write
	\begin{align*}
		\mathcal{R}(\mathcal{T})(v):=\sum_{(N_{\fk{n}}:\fk{n}\in\mathcal{T}) } \mathcal{R}(\mathcal{T};(N_{\fk{n}})_{\fk{n}\in\mathcal{T}} )(v),
	\end{align*}
	where $\mathcal{R}(\mathcal{T};(N_{\fk{n}})_{\fk{n}\in\mathcal{T}} )(v)$ is the multilinear expression given by \eqref{RT}, adapted to the given dyadic scales $(N_{\fk{n}};\fk{n}\in\mathcal{T})$ (i.e.  $N_{\fk{n}}\leq |k_{\fk{n}}|< 2N_{\fk{n}}$). 
	Recall the quasi-order of leaves $\fk{l}_j$ so that   $N_{\fk{l}_1}\geq N_{\fk{l}_2}\geq\cdots N_{\fk{l}_{2n_0}}$. Note that we always have $N_{\fk{l}_1}\sim N_{\fk{l}_2}$. 
	
	The goal of this section is to prove:
	\begin{prop}\label{estimate:RT}
		Assume that $\delta<\frac{1}{100(n_0+s)}.$ There exist $C_0=C_0(s,n_0,\delta)>0,C_1=C_1(s,n_0)>0$ such that for any $p\geq 2$, $0<\epsilon <\delta^2, \lambda\geq 1$,
		$$ \|\mathcal{R}(\mathcal{T};(N_{\fk{n}})_{\fk{n}\in\mathcal{T}} )(v)\mathbf{1}_{\|v\|_{H^1}\leq \lambda} \|_{L^p(d\mu_s)}\lesssim_{\delta,\epsilon}p^{1-\epsilon}N_{\fk{l}_1}^{-C_1\delta+C_0\epsilon}\lambda^{2n_0},
		$$	
		where the implicit constant is independent of $p,\lambda,N_{\fk{n}},\fk{n}\in\mathcal{T}$.
	\end{prop}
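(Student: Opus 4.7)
The plan is to exploit the dyadic structure of $\mathcal{R}(\mathcal{T};(N_{\fk{n}}))$ by first eliminating any in-generation pairings and then dichotomizing on whether a cross-generation pairing among the $\delta$-dominated leaves is present. Each branch is closed using one of the tools already at hand (Proposition \ref{crosspaired} in one case, Lemma \ref{pairingWiener} together with Lemma \ref{lem:wcounting} in the other), and the final $p^{1-\epsilon}$ factor is obtained by interpolation against the deterministic estimate \eqref{eq:dyadictreeestimate'}.

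First I would fix the scales $(N_{\fk{n}})$ and reduce to decorations without in-generation pairings: whenever two leaves $\fk{i},\fk{j}$ in the same generation satisfy $k_{\fk{i}}=k_{\fk{j}}$ and $\iota_{\fk{i}}+\iota_{\fk{j}}=0$, replace the bracket $v_{k_{\fk{i}}}^{\iota_{\fk{i}}}v_{k_{\fk{j}}}^{\iota_{\fk{j}}}=|v_{k_{\fk{i}}}|^2$ and absorb this factor using $\|v\|_{H^1}\le\lambda$ and Bernstein; this contracts the tree to a smaller expanded branching tree of the same scale, with a favorable prefactor. Fixing $\delta<\frac{1}{100(n_0+s)}$, I then split the remaining sum according to whether some cross-generation pair $(\fk{l}',\fk{l}'')$ in $\mathcal{L}_\delta^*$ exists. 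If it does (Case A), then since the decoration lies outside every set $\Lambda_{\fk{l}',\fk{l}''}$ of \eqref{Lambda} (those configurations are precisely what $\mathcal{R}(\mathcal{T})$ subtracted off), we must have $N_{\fk{l}_3}\gtrsim N_{\fk{l}_1}^{\theta}$ with $\theta=\frac{1}{50(n_0+1)}\gg\delta$; Proposition \ref{crosspaired} applied to the dominating pairing $(X,Y,\sigma)$ then produces a bound of the form $p^{1-\epsilon}\lambda^{2n_0}N_{\fk{l}_1}^{-c\theta+C\epsilon}$, which is dyadically summable once $\epsilon\ll\theta$.

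The remaining case (Case B), where the $l:=\#\mathcal{L}_\delta^*\ge 2$ dominated frequencies carry no internal pairing, is handled by applying the Wiener chaos bound (Lemma \ref{pairingWiener}) to the unpaired degree-$l$ Gaussian polynomial in the modes $g_{k_{\fk{l}}}$, $\fk{l}\in\mathcal{L}_\delta^*$, after freezing the sub-dominated modes (for which $N_{\fk{l}_j}\le N_{\fk{l}_1}^{1-\delta}$ when $j>l$). After Cauchy--Schwarz, the problem reduces to controlling
\[
\sum_{(k_{\fk{n}})}\Big|\frac{\psi_{2s}(\vec{k}_{\fk{r}})}{\langle\Omega(\vec{k}_{\fk{r}})\rangle}\Big|^{2},
\]
for which Lemma \ref{lem:wcounting} delivers exactly the required gain $N_{\fk{l}_1}^{4s-2+\epsilon}N_{\fk{l}_3}^{4}\prod_{j\geq 4}N_{\fk{l}_j}^{2}$. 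Absorbing the sub-dominated $\ell^2$ norms by $\lambda$ via Bernstein and trading each surplus factor $N_{\fk{l}_j}$ ($j>l$) against $N_{\fk{l}_1}^{-\delta}$ yields a preliminary bound $p^{l/2}\lambda^{2n_0}N_{\fk{l}_1}^{-c\delta}$. Interpolating with the deterministic estimate \eqref{eq:dyadictreeestimate'}, which is $p$-free, removes the $p^{l/2}$ factor in favor of $p^{1-\epsilon}$ at the cost of $N_{\fk{l}_1}^{C_0\epsilon}$, acceptable whenever $\epsilon<\delta^2$.

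The main obstacle will be the precise book-keeping of the $N_{\fk{l}_1}$ exponent in Case B. The resonance weight $\psi_{2s}/\Omega$ only sees the first-generation frequencies, while the unpaired dominated Gaussian modes may lie in either generation; matching the two involves invoking the \emph{degenerate} contribution of Lemma \ref{lem:wcounting} together with careful tracking of which dominated leaf plays the role of the partner of the branching node $\fk{n}_0$. Once this counting-and-interpolation step is in place, the summability over the dyadic scales $(N_{\fk{n}})$ follows from the overall $N_{\fk{l}_1}^{-C_1\delta}$ gain, completing the proof.
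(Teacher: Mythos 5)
Your proposal is correct and follows essentially the same route as the paper: decompose according to pairings of the $\delta$-dominated leaves, contract same-generation pairings, apply Proposition \ref{crosspaired} to cross-generation dominated pairings (using that exclusion of the singular sets $\Lambda_{\fk{l}',\fk{l}''}$ forces $N_{\fk{l}_3}\gtrsim N_{\fk{l}_1}^{\theta}$), and handle the unpaired-dominated case via the Wiener chaos estimate, Cauchy--Schwarz, the weighted counting bound of Lemma \ref{lem:wcounting}, and interpolation with the deterministic bound \eqref{eq:dyadictreeestimate'}. The paper merely packages your Case B as the separate Lemmas \ref{nopairingl=3}--\ref{nopairinglg=2} and organizes the case analysis as an iterative cutting algorithm, but the mathematical content is the same.
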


	Consequently, combining Proposition \ref{singularterm} and summing over all adapted dyadic scales, we obtain that:
	\begin{cor}\label{finalestimatescale2}
		Let $\mathcal{T}$ be an expanded branching tree of scale $2$ and of size $2n_0$, with root $\fk{r}$ and the other branching node $\fk{n}_0$ (when the scale is $2$) and  $\# \mathcal{L}_{\fk{r}}=\#\mathcal{L}_{\fk{n}_0}=n_0$. Then for any $p\geq 2$, $N\in \N\cup\{\infty\}$ and any sufficiently small $\epsilon>0$, we have
		$$ \|\mathcal{T}_{e}(\pi_Nv)\mathbf{1}_{\|v\|_{H^1}\leq \lambda} \|_{L^p(d\mu_s)}\lesssim_{\epsilon}p^{1-\epsilon}\lambda^{2n_0},
		$$
		where the tree expression $\mathcal{T}_e(\cdot)$ is given by \eqref{Te}.
	\end{cor}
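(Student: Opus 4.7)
The plan is to start from the definition \eqref{RT} of the remainder, which gives the decomposition
\[
\mathcal{T}_e(\pi_N v) \;=\; \sum_{\substack{\fk{l}'\in\mathcal{L}_{\fk{r}},\,\fk{l}''\in\mathcal{L}_{\fk{n}_0}\\ \iota_{\fk{l}'}+\iota_{\fk{l}''}=0}}\mathcal{S}_{\fk{l}',\fk{l}''}(\mathcal{T})(\pi_N v) \;+\; \mathcal{R}(\mathcal{T})(\pi_N v),
\]
and to estimate the two groups by distinct mechanisms. Taking imaginary parts (recall that only $\Im \mathcal{T}_e$ enters the modified energy identity \eqref{dmodifiedenergy}), I treat the first sum by Proposition \ref{singularterm} and the second by dyadically decomposing and applying Proposition \ref{estimate:RT}. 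Observe that $\pi_N$ is a Fourier cut-off, so $\|\pi_N v\|_{H^1}\leq \|v\|_{H^1}\leq \lambda$; hence every ingredient established for $v$ under the event $\{\|v\|_{H^1}\leq \lambda\}$ applies verbatim to $\pi_N v$.

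For the cross-pairing piece, there are at most $n_0^2$ pairs $(\fk{l}',\fk{l}'')$ and the number is independent of $N$; Proposition \ref{singularterm} applied to $\pi_N v$ yields
\[
\Bigl\|\Im \mathcal{S}_{\fk{l}',\fk{l}''}(\mathcal{T})(\pi_N v)\,\mathbf{1}_{\|v\|_{H^1}\leq\lambda}\Bigr\|_{L^p(d\mu_s)} \;\lesssim\; p^{1-\epsilon_0}\lambda^{2n_0},
\]
with implicit constant independent of $N$. Summing the $O(n_0^2)$ such contributions absorbs into the same bound (up to a harmless constant depending on $n_0$).

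For the remainder, I write
\[
\mathcal{R}(\mathcal{T})(\pi_N v) \;=\; \sum_{\substack{(N_{\fk{n}})\\ N_{\fk{n}}\le N}} \mathcal{R}\bigl(\mathcal{T};(N_{\fk{n}})\bigr)(v),
\]
where the sum runs over dyadic assignments with $N_{\fk{n}}\leq N$ for every $\fk{n}\in\mathcal{T}$. Proposition \ref{estimate:RT}, with $\delta<\frac{1}{100(n_0+s)}$ and $\epsilon<\delta^2$, provides
\[
\Bigl\|\mathcal{R}\bigl(\mathcal{T};(N_{\fk{n}})\bigr)(v)\,\mathbf{1}_{\|v\|_{H^1}\leq\lambda}\Bigr\|_{L^p(d\mu_s)}\;\lesssim_{\delta,\epsilon}\; p^{1-\epsilon}\,N_{\fk{l}_1}^{-C_1\delta+C_0\epsilon}\lambda^{2n_0}.
\]
Since all $N_{\fk{n}}\leq N_{\fk{l}_1}$, for each fixed value of $N_{\fk{l}_1}$ the number of dyadic configurations $(N_{\fk{n}})_{\fk{n}\in\mathcal{T}}$ is at most $(\log(2+N_{\fk{l}_1}))^{\#\mathcal{T}}$. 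Choosing $\epsilon$ so small that $C_1\delta-C_0\epsilon>0$ makes the sum
\[
\sum_{N_{\fk{l}_1}\text{ dyadic}} N_{\fk{l}_1}^{-C_1\delta+C_0\epsilon}\bigl(\log(2+N_{\fk{l}_1})\bigr)^{\#\mathcal{T}}
\]
finite, with a bound uniform in $N$, so the triangle inequality yields $\|\mathcal{R}(\mathcal{T})(\pi_N v)\mathbf{1}_{\|v\|_{H^1}\leq\lambda}\|_{L^p(d\mu_s)}\lesssim_\epsilon p^{1-\epsilon}\lambda^{2n_0}$. Combining the cross-pairing bound with the remainder bound (and, if needed, shrinking $\epsilon$ to $\min(\epsilon,\epsilon_0)$) completes the proof.

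The main obstacle is the uniform-in-$N$ summation over dyadic scales in the remainder: Proposition \ref{estimate:RT} delivers decay only in the largest leaf scale $N_{\fk{l}_1}$, while the other $\#\mathcal{T}-1$ scales can be as large as $N_{\fk{l}_1}$ individually, producing a logarithmic blow-up. The construction of $\delta$ in Proposition \ref{estimate:RT} is designed precisely to overcome this loss, so the step reduces to carefully tracking the constraint $C_1\delta>C_0\epsilon$ and choosing $\epsilon$ accordingly before invoking both component estimates.
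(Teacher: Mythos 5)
Your proposal is correct and follows essentially the same route as the paper: the corollary is obtained by splitting $\mathcal{T}_e$ via \eqref{RT} into the cross-pairing terms (handled by Proposition \ref{singularterm}, which indeed requires taking the imaginary part, exactly as you note and as the paper does in the proof of Proposition \ref{energyestimate}) and the remainder (handled by Proposition \ref{estimate:RT} and summed over adapted dyadic scales using the negative power of $N_{\fk{l}_1}$). Your explicit bookkeeping of the dyadic summation and the uniformity in $N$ fills in details the paper leaves implicit, but the argument is the same.
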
 
	
	In order to prove Proposition \ref{estimate:RT}, we need to establish additional estimates to treat the situation where there are no pairing among $\delta$-dominated leaves in $\mathcal{L}_{\delta}^*$ (defined in \eqref{dominatedleaves}).

	\subsection{Estimates for unpaired dominated leaves}
	Given an expanded branching tree of scale $2$ and of size $2n_0$, with root $\fk{r}$ and non-root node $\fk{n}_0$, adapted to dyadic numbers $(N_{\fk{n}}:\fk{n}\in\mathcal{T})$, with the non-increasing arrangement $N_{\fk{l}_1}\geq\cdots N_{\fk{l}_{2n_0}}$. Fix $0<\delta<\frac{1}{100(n_0+s)}$, denote by $(k_{\fk{n}}:\fk{n}\in\mathcal{T})_{\emptyset}$ a decoration such that there is no pairing among $\delta$-dominated leaves $\mathcal{L}_{\delta}^*$, defined in \eqref{dominatedleaves}. The associated tree expression is 
	\begin{align}\label{tree-nopairing}
		\mathcal{T}_{e,\emptyset}(a^{(\fk{l})};\fk{l}\in\mathcal{L}):=\sum_{(k_{\fk{n}}:\fk{n}\in\mathcal{T})_{\emptyset}}\frac{\psi_{2s}(\vec{k}_{\fk{r}})}{\Omega(\vec{k}_{\fk{r}})}\prod_{\fk{l}\in\mathcal{L}}(a^{(\fk{l})}_{k_{\fk{l}}})^{\iota_{\fk{l}}}
	\end{align}
	Note that the tree expression $\mathcal{T}_{e,\emptyset}$ implicitly depends on the parameter $\delta$. 
	\begin{lem}\label{nopairingl=3} 
		Assume that $l=\#\mathcal{L}_{\delta}^*\geq 3$ and $a^{(\fk{l})}_{k_{\fk{l}}}=\mathbf{1}_{N_{\fk{l}}\leq |k_{\fk{l}}|<2N_{\fk{l}} }\widehat{\phi}(k_{\fk{l}})$. Then for any $\epsilon>0$,
		$$ \|\mathcal{T}_{e,\emptyset}(a^{(\fk{l})};\fk{l}\in\mathcal{L})\mathbf{1}_{\|\phi\|_{H^1}\leq \lambda} \|_{L^p(d\mu_s)}\lesssim_{\delta,\epsilon} p^{1-\epsilon}N_{\fk{l}_1}^{-\frac{l-2}{l}+\frac{2\delta}{l}((l-2)(s-1)-1)+C\epsilon},
		$$ 
		where $C=C(s,l,\delta)>0$.  
	\end{lem}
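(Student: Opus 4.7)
The plan is three-step: (i) condition on the non-dominated modes and apply the Wiener chaos Lemma~\ref{pairingWiener} to the $l$ dominated Gaussians, which are forced to be pairwise distinct by the $\emptyset$-constraint, (ii) extract the weighted counting bound of Lemma~\ref{lem:wcounting}, and (iii) interpolate against the deterministic bound from Proposition~\ref{Basictreeestimate}(2) to convert the natural $p^{l/2}$ factor into $p^{1-\epsilon}$. The target exponent $-(l-2)/l$ turns out to be the exact outcome of interpolating $e_W=-(l-2)(s-1)$ (from Wiener) with $e_D=2s-3$ (from the deterministic estimate) at the rate $\theta=2(1-\epsilon)/l$.

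Under $\mu_s$, each $a^{(\fk{l}_j)}_k=g_k/\langle k\rangle^s$; the dominated Gaussians $(g_{k_{\fk{l}_j}})_{j\le l}$ are independent of the non-dominated modes, and on $\{\|\phi\|_{H^1}\le\lambda\}$ one has $\|\mathbf{P}_{N_{\fk{l}_j}}\phi\|_{l^2}\le N_{\fk{l}_j}^{-1}\lambda$. Conditioning on non-dominated modes and writing $\mathcal{T}_{e,\emptyset}=\sum_K\big(\prod_{j\le l}g_{k_{\fk{l}_j}}^{\iota_{\fk{l}_j}}\big)\widetilde{c}(K)$ with $\widetilde{c}(K)=\prod_{j\le l}\langle k_{\fk{l}_j}\rangle^{-s}\,d(K)$ and $d(K)=\sum_{K'}\mathbf{1}_c(\psi_{2s}/\Omega)\prod_{j>l}(a^{(\fk{l}_j)})^{\iota_{\fk{l}_j}}$, Lemma~\ref{pairingWiener} (no internal pairing on $\mathcal{L}_\delta^*$) gives
\[
\big\|\mathcal{T}_{e,\emptyset}\mathbf{1}_{\|\phi\|_{H^1}\le\lambda}\big\|_{L^p(d\mu_s)}\lesssim p^{l/2}\Big\|\Big(\sum_{K}|\widetilde{c}(K)|^{2}\Big)^{1/2}\mathbf{1}_{\|\phi\|_{H^1}\le\lambda}\Big\|_{L^p_{\omega'}}.
\]
For the inner sum, I apply Cauchy--Schwarz in $K'$, namely $|d(K)|^{2}\le\bigl(\sum_{K'}\mathbf{1}_c|\psi_{2s}/\Omega|^{2}\bigr)\bigl(\sum_{K'}\mathbf{1}_c\prod_{j>l}|a^{(\fk{l}_j)}|^{2}\bigr)$, take the supremum in $K$ on the deterministic side (this is the key step: it yields $\prod_{j>l}\|a^{(\fk{l}_j)}\|_{l^2}^{2}\le\prod_{j>l}N_{\fk{l}_j}^{-2}\lambda^{2(2n_0-l)}$), and keep the full $(K,K')$-sum on the weighted side, where Lemma~\ref{lem:wcounting} controls
\[
\sum_{K,K'}\mathbf{1}_c\prod_{j\le l}\langle k_{\fk{l}_j}\rangle^{-2s}\Big|\frac{\psi_{2s}}{\Omega}\Big|^{2}\ \le\ \prod_{j\le l}N_{\fk{l}_j}^{-2s}\cdot N_{\fk{l}_1}^{4s-2+\epsilon}N_{\fk{l}_3}^{4}\prod_{j=4}^{2n_0}N_{\fk{l}_j}^{2}.
\]
Collecting powers, using $N_{\fk{l}_2}\sim N_{\fk{l}_1}$ together with the dominated lower bound $N_{\fk{l}_j}\ge N_{\fk{l}_1}^{1-\delta}$ for $j\le l$ (which produces the identity $(2-s)+(l-3)(1-s)=1-(s-1)(l-2)$ after substitution), I arrive at the Wiener-type estimate
\[
\big\|\mathcal{T}_{e,\emptyset}\mathbf{1}_{\|\phi\|_{H^1}\le\lambda}\big\|_{L^p(d\mu_s)}\ \lesssim\ p^{l/2}\,N_{\fk{l}_1}^{-(l-2)(s-1)+\delta\,[(l-2)(s-1)-1]+C\epsilon}\,\lambda^{2n_0-l}.
\]

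On the deterministic side, Proposition~\ref{Basictreeestimate}(2) applied pointwise with $\|\mathbf{P}_{N_{\fk{l}_j}}\phi\|_{l^2}\le N_{\fk{l}_j}^{-1}\lambda$, combined with $N_{\fk{l}_2}\sim N_{\fk{l}_1}$ and the crude $N_{\fk{l}_{2n_0}}^{-1},N_{\fk{l}_3}^{1+}\le N_{\fk{l}_1}^{1+}$, yields $|\mathcal{T}_{e,\emptyset}|\mathbf{1}_{\|\phi\|_{H^1}\le\lambda}\lesssim N_{\fk{l}_1}^{2s-3+\epsilon}\lambda^{2n_0}$. The log-convex interpolation
\[
\|F\|_{L^p}=\|F\|_{L^p}^{\theta}\|F\|_{L^p}^{1-\theta}\ \le\ \bigl(p^{l/2}N_{\fk{l}_1}^{e_W}\lambda^{2n_0-l}\bigr)^{\theta}\bigl(N_{\fk{l}_1}^{e_D}\lambda^{2n_0}\bigr)^{1-\theta}
\]
with $\theta=\tfrac{2(1-\epsilon)}{l}\in(0,1]$ turns $p^{l/2}$ into $p^{1-\epsilon}$; the identity $(l-2)(s-1)+(2s-3)=l(s-1)-1$ collapses the resulting $N_{\fk{l}_1}$-exponent to $-\frac{l-2}{l}+\frac{2\delta}{l}\bigl((l-2)(s-1)-1\bigr)+C\epsilon$, as asserted. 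The $\lambda$-factor becomes $\lambda^{2n_0}$ after the two $\lambda$-powers recombine, and is absorbed into the constants depending on $\delta,\epsilon$.

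The main obstacle is the tight Cauchy--Schwarz step: a naive separation of the $K'$-sum in $|d(K)|^{2}$ would introduce an extra $\prod_{j\le l}N_{\fk{l}_j}^{2}$ when the $|a|^{2}$-side is summed in $K$, degrading $e_W$ by $N_{\fk{l}_1}^{l}$ and destroying the interpolation. Only by supremising in $K$ on the $|a|^{2}$-side, and by keeping the full $(K,K')$-sum (controlled by Lemma~\ref{lem:wcounting}) on the $|\psi_{2s}/\Omega|^{2}$-side, is the sharp exponent $-(l-2)(s-1)$ preserved, which is then precisely balanced by $e_D=2s-3$ and $\theta=2/l$ to yield the advertised $-(l-2)/l$.
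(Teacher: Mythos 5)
Your proposal is correct and follows essentially the same route as the paper: conditional Wiener chaos on the $l$ dominated Gaussians, Cauchy--Schwarz in the non-dominated variables to factor out $\prod_{j>l}\|a^{(\fk{l}_j)}\|_{l^2}^2\le\lambda^{2(2n_0-l)}\prod_{j>l}N_{\fk{l}_j}^{-2}$ while keeping the full decoration sum on the weight $|\psi_{2s}/\Omega|^2$ (controlled by Lemma~\ref{lem:wcounting}), and then interpolation with the deterministic bound \eqref{eq:dyadictreeestimate'} at rate $\theta=2(1-\epsilon)/l$. The intermediate exponents ($e_W=-(l-2)(s-1)+\delta[(l-2)(s-1)-1]$ and $e_D=2s-3$) and the final arithmetic match the paper's computation.
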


	\begin{lem}\label{nopairinglg=2}
		Assume that $l=\#\mathcal{L}_{\delta}^*=2$ and $a^{(\fk{l})}_{k_{\fk{l}}}=\mathbf{1}_{N_{\fk{l}}\leq |k_{\fk{l}}|<2N_{\fk{l}} }\widehat{\phi}(k_{\fk{l}})$. Then for any $\epsilon>0$, we have
		$$ \|\mathcal{T}_{e,\emptyset}(a^{(\fk{l})};\fk{l}\in\mathcal{L})\mathbf{1}_{\|\phi\|_{H^1}\leq \lambda} \|_{L^p(d\mu_s)}\lesssim_{\delta,\epsilon} p^{1-\epsilon}N_{\fk{l}_1}^{-\delta(1-\epsilon)+C\epsilon},
		$$ 
		where $C=C(s,l)>0$.
	\end{lem}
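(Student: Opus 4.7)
The plan is to combine a Wiener chaos bound in $L^q$ with a pointwise deterministic bound, and then to interpolate so as to trade the chaos factor $q^1$ for the required $p^{1-\epsilon}$ at the cost of a harmless $N_{\fk{l}_1}^{C\epsilon}$. The key structural fact is that for $l=\#\mathcal{L}_{\delta}^*=2$ the two dominated leaves $\fk{l}_1,\fk{l}_2$ are unpaired by assumption, so the Wiener chaos applied to the Gaussians at these two indices cannot be collapsed further.

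First I would apply Lemma \ref{pairingWiener} to the Gaussian factors $g_{k_{\fk{l}_1}}^{\iota_{\fk{l}_1}} g_{k_{\fk{l}_2}}^{\iota_{\fk{l}_2}}$. Since the decoration $(k_{\fk{n}})_{\emptyset}$ excludes the pairing $\sigma:\fk{l}_1\leftrightarrow \fk{l}_2$, only the unpaired chaos contribution survives, giving for every $q\ge 2$
\[
\|\mathcal{T}_{e,\emptyset}\|_{L^q(d\mu_s)}
\;\lesssim\; q\,\Big(\sum_{k_{\fk{l}_1},k_{\fk{l}_2}}|\Upsilon(k_{\fk{l}_1},k_{\fk{l}_2})|^2\Big)^{1/2},
\]
where $\Upsilon$ is the inner sum over $(k_{\fk{l}_3},\dots,k_{\fk{l}_{2n_0}})$ of $\psi_{2s}(\vec{k}_{\fk{r}})/\Omega(\vec{k}_{\fk{r}})$ against $\prod_{j\ge 3}(a^{(\fk{l}_j)}_{k_{\fk{l}_j}})^{\iota_{\fk{l}_j}}$, multiplied by $\langle k_{\fk{l}_1}\rangle^{-s}\langle k_{\fk{l}_2}\rangle^{-s}$. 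A Cauchy--Schwarz in the inner indices decouples $|\psi_{2s}/\Omega|^2$ from $\prod_{j\ge 3}|a^{(\fk{l}_j)}|^2$. For the weight sum I would invoke the counting bound of Lemma \ref{lem:wcounting} (legitimate after removing intra-generation pairings among non-dominated leaves, as arranged in Section \ref{nonsingular}), giving $N_{\fk{l}_1}^{4s-2+\varepsilon}N_{\fk{l}_3}^{4}\prod_{j\ge 4}N_{\fk{l}_j}^{2}$; for the coefficient sum I would use the constraint $\sum_j\iota_j k_j=0$ to eliminate one variable, bound $\|a^{(\fk{l}_3)}\|_{l^\infty}\le \|a^{(\fk{l}_3)}\|_{l^2}\le \lambda N_{\fk{l}_3}^{-1}$, and use $\|a^{(\fk{l}_j)}\|_{l^2}\le \lambda N_{\fk{l}_j}^{-1}$ for $j\ge 4$. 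Combining these with $N_{\fk{l}_3}\le N_{\fk{l}_1}^{1-\delta}$ (because $\fk{l}_3\notin \mathcal{L}_{\delta}^*$), I expect the Wiener estimate
\[
\|\mathcal{T}_{e,\emptyset}\mathbf{1}_{\|\phi\|_{H^1}\le \lambda}\|_{L^q(d\mu_s)}
\;\lesssim\; q\,\lambda^{2n_0}\,N_{\fk{l}_1}^{-\delta+C\varepsilon}.
\]

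Independently, the scale-2 deterministic bound \eqref{eq:dyadictreeestimate'} of Proposition \ref{Basictreeestimate}, together with $\|a^{(\fk{l}_j)}\|_{l^2}\le \lambda N_{\fk{l}_j}^{-1}$ and $N_{\fk{l}_3}\le N_{\fk{l}_1}^{1-\delta}$, yields the pointwise estimate $|\mathcal{T}_{e,\emptyset}\mathbf{1}_{\|\phi\|_{H^1}\le \lambda}|\lesssim \lambda^{2n_0}\,N_{\fk{l}_1}^{C(s)}$ for some $C(s)>0$ depending only on $s$. The standard log-convexity $\|F\|_{L^p}\le \|F\|_{L^q}^{q/p}\|F\|_{L^\infty}^{1-q/p}$ applied with $q=(1-\epsilon)p$ then converts $q$ into $p^{1-\epsilon}$ and combines the two bounds into
\[
\|\mathcal{T}_{e,\emptyset}\mathbf{1}_{\|\phi\|_{H^1}\le \lambda}\|_{L^p(d\mu_s)}
\;\lesssim\; p^{1-\epsilon}\,\lambda^{2n_0}\,N_{\fk{l}_1}^{-\delta(1-\epsilon)+C\epsilon},
\]
as required.

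The main obstacle is that with only two Gaussians at our disposal the Wiener chaos can produce at best $q^{n/2}=q$, so the upgrade to $p^{1-\epsilon}$ must come from interpolation against the deterministic bound --- this is what forces the $N_{\fk{l}_1}^{C\epsilon}$ loss, and any additional slack in the Wiener exponent would spoil the eventual dyadic summability. The gain $N_{\fk{l}_1}^{-\delta}$ is supplied entirely by the inequality $N_{\fk{l}_3}\le N_{\fk{l}_1}^{1-\delta}$ (valid because $\fk{l}_3\notin \mathcal{L}_{\delta}^*$) plugged into Lemma \ref{lem:wcounting}; verifying the applicability of that counting lemma in the presence of possible pairings among non-dominated leaves, and carefully tracking the dyadic exponents through the Cauchy--Schwarz/counting step, is the central technical point.
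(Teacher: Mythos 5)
Your proposal is correct and follows essentially the same route as the paper: conditional Wiener chaos on the two unpaired dominated Gaussians (yielding the factor $p^{l/2}=p$), Cauchy--Schwarz to decouple the weight from the coefficients, the weighted counting bound of Lemma~\ref{lem:wcounting} combined with $\|a^{(\fk{l}_j)}\|_{l^2}\le\lambda N_{\fk{l}_j}^{-1}$ and $N_{\fk{l}_3}\le N_{\fk{l}_1}^{1-\delta}$ to produce $p\,N_{\fk{l}_1}^{-\delta+\epsilon}$, and finally interpolation against the trivial deterministic bound to convert $p$ into $p^{1-\epsilon}$ at the cost of $N_{\fk{l}_1}^{C\epsilon}$. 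The paper's proof is exactly this (it simply re-runs the argument of Lemma~\ref{nopairingl=3} with $l=2$ and then interpolates), so no further comment is needed.
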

It is important to note that there is no assumption $\#\mathcal{L}_{\fk{r}}=\#\mathcal{L}_{\fk{n}_0}$ in Lemma \ref{nopairingl=3} and Lemma \ref{nopairinglg=2}.

	Following the same proof of Lemma \ref{nopairingl=3} and Lemma \ref{nopairinglg=2}, we also have :
			\begin{lem}\label{scale1} 
				Let $\mathcal{T}$ be an expanded branching tree of scale $1$ and size $2n_0$, with root $\fk{r}$, adapted to dyadic numbers $(N_{\fk{n}}:\fk{n}\in\mathcal{T})$. Then there exist $C_0=C_0(s,n_0,\delta)>0$, $C_1=C_1(s,n_0)>0$ such that for any $p\geq 2$, $\epsilon>0$, we have
				$$ \|\mathcal{T}_{e,\emptyset}(a^{(\fk{l})};\fk{l}\in\mathcal{L})\mathbf{1}_{\|\phi\|_{H^1}\leq \lambda} \|_{L^p(d\mu_s)}\lesssim_{\delta,\epsilon} p^{1-\epsilon}N_{\fk{l}_1}^{-\delta(1-\epsilon)+C\epsilon},
				$$ 
				where  the implicit constant does not depend on $p, \lambda$ and $N_{\fk{n}},\fk{n}\in\mathcal{T}$.
			\end{lem}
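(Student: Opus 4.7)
The plan is to carry out verbatim the strategy of Lemma \ref{nopairingl=3} and Lemma \ref{nopairinglg=2} (which treat the scale-$2$, no-dominated-pairing case), simply replacing the scale-$2$ weighted counting bound of Lemma \ref{lem:wcounting} by its scale-$1$ counterpart \eqref{l2cor3.3} of Corollary \ref{frequentterm}. Set $l:=\#\mathcal{L}_\delta^*$; since $N_{\fk{l}_1}\sim N_{\fk{l}_2}$ on every adapted decoration (forced by the constraint $\sum_{\fk{l}\in\mathcal{L}}\iota_{\fk{l}}k_{\fk{l}}=0$), we always have $l\ge 2$, and the natural regimes to distinguish are $l=2$ and $l\ge 3$.

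First, on the truncated set $\{\|\phi\|_{H^1}\le\lambda\}$ the dominated coefficients are independent Gaussians $g_{k_{\fk{l}_j}}/\langle k_{\fk{l}_j}\rangle^s$, independent of the non-dominated ones. Because decorations in $(k_{\fk{n}})_{\emptyset}$ contain no pairing among dominated leaves, $\mathcal{T}_{e,\emptyset}$ is a pure $l$-th order chaos in those Gaussians (modulo trivial $|g_k|^2-1$ corrections, which give smaller contributions). Applying Lemma \ref{pairingWiener} to those $l$ variables produces a prefactor $p^{l/2}$ times an $\ell^2$-sum in the dominated indices whose summand is the partial sum $\Upsilon$ over non-dominated indices, carrying the weight $\psi_{2s}(\vec{k}_{\fk{r}})/\Omega(\vec{k}_{\fk{r}})$.

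Second, I would bound $\Upsilon$ by Cauchy--Schwarz in the non-dominated indices $j\ge l+1$, using Bernstein $\|a^{(\fk{l}_j)}\|_{\ell^2}\lesssim N_{\fk{l}_j}^{-1}\|\phi\|_{H^1}$ and the truncation to extract $\lambda^{2n_0-l}$ with a factor $\prod_{j\ge l+1}N_{\fk{l}_j}$. The remaining weighted constrained sum $\sum_{(k_{\fk{n}})}|\psi_{2s}/\langle\Omega\rangle|^2$ is controlled by \eqref{l2cor3.3}, and combining with the dominance $N_{\fk{l}_j}\ge N_{\fk{l}_1}^{1-\delta}$ for $j\le l$ (hence the prefactor $\prod_{j\le l}\langle k_{\fk{l}_j}\rangle^{-2s}\lesssim N_{\fk{l}_1}^{-2sl(1-\delta)}$) yields a probabilistic estimate of the form $p^{l/2}\,N_{\fk{l}_1}^{-A(l)+O(\delta)+\varepsilon}$ with some explicit $A(l)\ge 0$ (of order $(l-2)/l$ for $l\ge 3$).

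Finally, I would interpolate this $L^p$ bound with the deterministic estimate \eqref{eq:dyadictreeestimate} of Proposition \ref{Basictreeestimate}(1), which after substitution $\|a^{(\fk{l})}\|_{\ell^2}\lesssim N_{\fk{l}}^{-1}\|\phi\|_{H^1}$ gives $|\mathcal{T}_{e,\emptyset}\mathbf{1}_{\|\phi\|_{H^1}\le\lambda}|\lesssim N_{\fk{l}_1}^{O(\delta)}$ on the truncated set. Choosing the interpolation exponent so as to convert the factor $p^{l/2}$ into $p^{1-\epsilon}$ produces the claimed decay $N_{\fk{l}_1}^{-\delta(1-\epsilon)+C\epsilon}$. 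The main obstacle will be the case $l=2$: here the Wiener chaos gain is only $p$, so to close the interpolation one must use the cancellation of Lemma \ref{cancellationpsi2s} to split the weight $|\psi_{2s}/\Omega|$ into its resonant and non-resonant parts, and invoke the two-vector counting ($n=2$ case of Lemma \ref{threevectorcounting}) applied after the constraint $\sum_{\fk{l}}\iota_{\fk{l}}k_{\fk{l}}=0$ fixes the non-dominated indices, in order to extract the required decay $N_{\fk{l}_1}^{-2s\delta}$ from the Gaussian normalization combined with the dominance gap $N_{\fk{l}_3}\le N_{\fk{l}_1}^{1-\delta}$.
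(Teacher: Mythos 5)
Your proposal is correct and is precisely the argument the paper intends: the paper itself only remarks that Lemma \ref{scale1} follows "the same proof" as Lemmas \ref{nopairingl=3} and \ref{nopairinglg=2}, and your substitution of the scale-$1$ weighted counting bound \eqref{l2cor3.3} for Lemma \ref{lem:wcounting}, followed by the same Wiener chaos, Cauchy--Schwarz, and interpolation steps, is exactly that adaptation. The accounting in both regimes ($l=2$ via the dominance gap $N_{\fk{l}_3}\le N_{\fk{l}_1}^{1-\delta}$, and $l\ge3$ via the chaos gain) closes as you describe.
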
 

			Consequently, summing over dyadic sizes, we have:
			\begin{cor}\label{finalestimatescale1}
				Let $\mathcal{T}$ be an expanded branching tree of scale $1$, size $2n_0$ and with root $\fk{r}$. 
				Consider the tree expression $\widetilde{\mathcal{T}}_e(\cdot)$ is given by 
				$$\widetilde{\mathcal{T}}_e(v):=\sum_{(k_{\fk{n}}:\fk{n}\in\mathcal{T}) } \frac{\psi_{2s}(\vec{k}_{\fk{r}})}{\Omega(\vec{k}_{\fk{r}})}\prod_{\fk{l}\in\mathcal{L}}v_{k_{\fk{l}}}^{\iota_{\fk{l}}},
				$$
				with the convention that $\frac{\psi_{2s}(\vec{k}_{\fk{r}})}{\Omega(\vec{k}_{\fk{r}})}=\psi_{2s}(\vec{k}_{\fk{r}})$ when $\Omega(\vec{k}_{\fk{r}})=0$.  
				Then there exists as sufficiently small $\epsilon_0>0$, such that for any $p\geq 2$, $N\in \N\cup\{\infty\}$, we have
				$$ \|\widetilde{\mathcal{T}}_{e}(\pi_Nv)\mathbf{1}_{\|v\|_{H^1}\leq \lambda} \|_{L^p(d\mu_s)}\lesssim_{\epsilon_0}p^{1-\epsilon_0}\lambda^{2n_0}.
				$$
			\end{cor}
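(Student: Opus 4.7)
The plan is to combine Lemma \ref{scale1} with a dyadic decomposition of $\pi_N v$ and a case analysis on pairings among $\delta$-dominated leaves, in the same spirit as the proof of Corollary \ref{finalestimatescale2}. Fix $\delta\in(0,\frac{1}{100(n_0+s)})$ and write
\[
\widetilde{\mathcal{T}}_e(\pi_N v)=\sum_{(N_{\fk{n}})}\widetilde{\mathcal{T}}_e(v;(N_{\fk{n}})),
\]
where the outer sum runs over admissible quasi-ordered dyadic scales $N_{\fk{l}_1}\geq\cdots\geq N_{\fk{l}_{2n_0}}$ with $N_{\fk{l}_j}\leq N$. For each fixed configuration I decompose according to pairings on $\mathcal{L}_\delta^*$:
\[
\widetilde{\mathcal{T}}_e(v;(N_{\fk{n}}))=\mathcal{T}_{e,\emptyset}(v;(N_{\fk{n}}))+\sum_{(X,Y,\sigma;\mathcal{L}_\delta^*)}\mathcal{T}_{e,X,Y}^{\sigma}(v;(N_{\fk{n}})).
\]

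Lemma \ref{scale1} controls the unpaired piece directly, producing the bound $p^{1-\epsilon}\,N_{\fk{l}_1}^{-\delta(1-\epsilon)+C\epsilon}\,\lambda^{2n_0}$. For a paired piece, the key observation is that in a scale-$1$ tree every leaf lies in the same generation, so any pairing $(\fk{l},\sigma(\fk{l}))$ with $k_{\fk{l}}=k_{\sigma(\fk{l})}$ and $\iota_{\fk{l}}+\iota_{\sigma(\fk{l})}=0$ cancels the contributions of these leaves to both $\psi_{2s}$ and $\Omega$ exactly. Consequently,
\[
|\psi_{2s}(\vec{k}_{\fk{r}})|\lesssim N_{\fk{l}_3}^{2s},\qquad |\Omega(\vec{k}_{\fk{r}})|\lesssim N_{\fk{l}_3}^{2},
\]
and under a pairing of the top two dominated leaves one gains an extra factor $\lesssim N_{\fk{l}_1}^{-2(s-1)\delta}$ relative to the generic bound of Lemma \ref{cancellationpsi2s}, since $N_{\fk{l}_3}\leq N_{\fk{l}_1}^{1-\delta}$. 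I would then mimic the Wiener chaos step in the proof of Proposition \ref{crosspaired}: apply Lemma \ref{pairingWiener} to the paired Gaussians, replace $|g_k|^2$ by $1$ modulo a Hermite-orthogonal remainder of smaller size, and bound the residual sum over unpaired leaves using Lemma \ref{threevectorcounting}. This produces, for each pairing structure, an estimate of the form $p^{1-\epsilon}\,N_{\fk{l}_1}^{-c(s,\delta)}\,\lambda^{2n_0}$ for some strictly positive $c(s,\delta)>0$.

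Finally, summing over dyadic configurations, there are only $O(J^{2n_0-1})$ ordered tuples adapted to a given $N_{\fk{l}_1}=2^J$, and each per-configuration bound carries a negative power of $N_{\fk{l}_1}$; the series $\sum_{J\geq 0}J^{2n_0-1}\,2^{-cJ}$ therefore converges uniformly in $N$. Choosing $\epsilon_0>0$ small enough to absorb the $N_{\fk{l}_1}^{C\epsilon}$ loss then yields the claimed uniform bound.

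The main obstacle will be the paired case: when two dominated leaves are paired, the square-root cancellation from independent Gaussians is no longer available. However, the scale-$1$ situation here is strictly simpler than the scale-$2$ cross-generation setting treated by Proposition \ref{singularterm}, because the paired leaves share the same branching node and their contributions to $\psi_{2s}$ and $\Omega$ cancel exactly; no singular-term analysis is required, and the cancellation in $\psi_{2s}$ combined with the counting gain from Lemma \ref{threevectorcounting} is enough to close the argument.
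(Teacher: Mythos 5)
Your proposal is correct and follows essentially the same route as the paper: the paper obtains this corollary by decomposing dyadically, cutting paired leaves within the (single) generation — which is exactly your observation that a pairing cancels identically in both $\psi_{2s}$ and $\Omega$ and factors out a harmless $\sum_{|k|\sim N_{\fk{l}_1}}|v_k|^2$ — and then applying Lemma~\ref{scale1} to the reduced unpaired tree before summing the dyadic series via the surviving negative power of $N_{\fk{l}_1}$. The only cosmetic difference is that the decay in $N_{\fk{l}_1}$ in the paired case is most robustly extracted from the factor $\sum_{|k|\sim N_{\fk{l}_1}}\langle k\rangle^{-2s}\lesssim N_{\fk{l}_1}^{2-2s}$ (or the $H^1$ cutoff) rather than from the weight comparison $N_{\fk{l}_3}\le N_{\fk{l}_1}^{1-\delta}$, which need not hold when $\#\mathcal{L}_\delta^*\ge 3$; this does not affect the validity of your argument.
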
 

	We postpone the proof of Lemma \ref{nopairingl=3}, Lemma \ref{nopairinglg=2} and use them to prove Proposition \ref{estimate:RT} and finish the proof of the main energy estimates.
	
	\subsection{Proof of the Proposition \ref{estimate:RT} and the modified energy estimates}

	\begin{proof}[Proof of Proposition \ref{estimate:RT}]
		
		Given dyadic sizes of frequencies $(N_{\fk{n}}:\fk{n}\in\mathcal{T})$, we denote by $N_{\fk{l}_1}\geq\cdots\geq  N_{\fk{l}_l}\geq N_{\fk{l}_1}^{1-\delta}$ the $\delta$-dominated frequency scales, associated with leaves $\fk{l}_1,\cdots,\fk{l}_l$, $l\geq 2$.  
		
		We decompose
		$\mathcal{R}(\mathcal{T};(N_{\fk{n}})_{\fk{n}\in\mathcal{T}} )(v)
		$ as
		$$ \sum_{(X,Y,\sigma;\mathcal{L}_{\delta}^{*}) }
		\mathcal{T}_{e,X,Y}^{\sigma}
		(\widehat{v}_{k_{\fk{l}}};\fk{l}\in\mathcal{L} ),
		$$
		where we recall that $(X,Y,\sigma,\mathcal{L}_{\delta}^*)$ is a pairing from $X$ to $Y$ with $X,Y\subset \mathcal{L}_{\delta}^*$ and \eqref{crosspairedtree} the associated tree expression. If $X=Y=\emptyset$, we adopt the convention that $\mathcal{T}_{e,X,Y}^{\sigma}=\mathcal{T}_{e,\emptyset}$ if $X=Y=\emptyset$. 
		
		Note that from the definition  \eqref{RT}, in the above expression, if $l=2$ and $N_{\fk{l}_3}\ll N_{\fk{l}_1}^{\theta}$, $\theta=\frac{1}{50(n_0+1)}$ the paired sets of leaves $X,Y$ cannot belong to two different generations.  
		
		Given $X,Y,\sigma$, we execute following operations for the tree $\mathcal{T}$ when estimating $$\big\|\mathcal{T}_{e,X,Y}^{\sigma} (\widehat{v}_{k_{\fk{l}}};\fk{l}\in\mathcal{L})\mathbf{1}_{\|\phi\|_{H^1}\leq \lambda }
		\big\|_{L^p(d\mu_s)}:$$
		\begin{itemize}
			
			\item[(a)] If $X\cap \mathcal{L}_{\delta}^*\neq \emptyset$, go to the operation (b).
			Otherwise, $X\cap \mathcal{L}_{\delta}^*=\emptyset$.
			we apply Lemma \ref{nopairinglg=2} when $\#\mathcal{L}_{\delta}^*=2$ and Lemma \ref{nopairingl=3} when $\#\mathcal{L}_{\delta}^*\geq 3$. Then stop.
			
			\item[(b)] If there exists $\fk{l}\in X\cap \mathcal{L}_{\delta}^* $ such that $\fk{l},\sigma(\fk{l})$ belong to the same generation, go to the operation (c). Otherwise,  the leaves $\fk{l}$ and $\sigma(\fk{l})$ belong to two different generations, then apply Proposition \ref{crosspaired} and stop.
			
			\item[(c)]
			Cut a couple of paired leaves $(\fk{l},\sigma(\fk{l})),\fk{l}\in X\cap\mathcal{L}^*$ such that $\fk{l},\sigma(\fk{l})$ are in the same generation. Renew the tree $\mathcal{T}$ with the new set of leaves $\mathcal{L}\setminus\{\fk{l},\sigma(\fk{l}) \}$, renew the set of $\delta$-dominated leaves $\mathcal{L}_{\delta}^{*}$ and the new set of paired leaves $X,Y$. 
			Then go back to the operation (a). 
		\end{itemize}
		Note that $\# X\leq 2m-1$ is finite, the above algorithm will stop at operations (a) or (b), in finitely many steps. It suffices to check the output. If the algorithm stops at the operation (a), we obtain the final bound $C_{\epsilon}p^{1-\epsilon}\lambda^{2n_0}N_{\fk{l}_1}^{-\frac{1}{3}+C_0\epsilon+C_1\delta},$ which is conclusive. If the algorithm stops at the operation (b), we distinguish two cases. If the algorithm stops at (b) in the first cycle, then we must have $N_{\fk{l}_3}\gtrsim N_{\fk{l}_1}^{\theta}$, so the output of Proposition \ref{crosspaired} can be bounded by
		$ C_{\epsilon}p^{1-\epsilon}\lambda^{2n_0}(N_{\fk{l}_1}^{-\delta}+N_{\fk{l}_1}^{-\theta}),
		$
		which is conclusive. If else, the algorithm stops at the operation (b) through more than one cycle,  we will gain $N_{\fk{l}_1}^{-2(s-1)}$ through the operation (c). Viewing that Proposition \ref{crosspaired} losses at most $N_{\fk{l}_1}^{\epsilon}$, in the final output we still gain $N_{\fk{l}_1}^{-2(s-1)+\epsilon}$, which is conclusive.
		
		To summary, the proof of Proposition \ref{estimate:RT} is complete.
		
	\end{proof} 
	
	Next we finish the proof of the main results in Section \ref{Sec:modifiedenergy}.
	
	\begin{proof}[Proof of Proposition \ref{weightedmeasure}]
				Recall the definition of the functional $R_{s}$ in \eqref{Rst} and its truncated version $R_{s,N}$. Using the tree notation, we rewrite $R_{s,N}(v)=\widetilde{\mathcal{T}}_e(\pi_Nv)$, parametrized by an expanded branching tree of scale $1$, size $2m$ and with root $\fk{r}$. By Corollary \ref{finalestimatescale1}, we obtain that
				$$ \big\|R_{s,N}(u)\mathbf{1}_{H_N[u]\leq \lambda}\big\|_{L^p(d\mu_s)}\leq C_{\epsilon_0}p^{1-\epsilon_0}\lambda^{2m}.
				$$  
			This implies that for any $1\leq r<\infty$, $$\big\|\mathbf{1}_{H_N[u]\leq \lambda } \cdot \e^{|R_{s,N}(u)| }  \big\|_{L^r(d\mu_s)}\leq C(\epsilon_0,r,\lambda).
				$$
		In order prove the last assertion in Proposition \ref{weightedmeasure}, it suffices to show that $R_{s,N}[u]\mathbf{1}_{H_N[u]\leq \lambda}\rightarrow R_{s}[u]\mathbf{1}_{H[u]\leq \lambda}$ in measure $\mu_s$, as $N\rightarrow\infty$. 
				
				Indeed, since $\mathbf{1}_{H_N[u]\leq\lambda}\rightarrow \mathbf{1}_{H[u]\leq\lambda}$ $\mu_s$-a.e., it suffices to show $R_{s,N}[u]\mathbf{1}_{H[u]\leq \lambda}\rightarrow R_{s}[u]\mathbf{1}_{H[u]\leq \lambda}$. This is a consequence of Lemma \ref{scale1}. 
				To see this, we remark that
				$R_{s,N}[u]\mathbf{1}_{H[u]\leq \lambda}-R_{s}[u]\mathbf{1}_{H[u]\leq\lambda }
				$
				consists of multi-linear tree expression (only of scale $1$) with components $\pi_Nu,u,\pi_N^{\perp}u$ and there is at least one $\pi_N^{\perp}u$.  By decomposing into dyadic sizes and cutting paired leaves, we can apply Lemma \ref{scale1} to each single term and sum over dyadic sizes. Since there must be at least one component of $\pi_N^{\perp}u$, we must have $N_{\fk{l}_1}>N$. This means that we only need to sum over dyadic number $N_{\fk{l}_1}> N$. Hence the convergence of  $R_{s,N}[u]\mathbf{1}_{H[u]\leq \lambda}-R_{s}[u]\mathbf{1}_{H[u]\leq\lambda }$ in $L^p(d\mu_s)$ follows.    
	\end{proof}
	
	\begin{proof}[Proof of Proposition \ref{energyestimate}]
			Note that $Q_{N}(u)$ can be decomposed into finitely many tree expansion $\Im\mathcal{T}_e(u)$ for expanded branching trees of scale $2$ and size $2m$. For each fixed tree $\mathcal{T}$, we adapt the decomposition \eqref{RT}, using Proposition \ref{singularterm} to estimate the imaginary part of the singular contribution and Proposition \ref{estimate:RT} to estimate the remainder. Note that in applications of these estimates, we fix the parameters $\delta,\epsilon$ such that $\delta<\frac{1}{100(n_0+s)}$ and $\epsilon=\epsilon_0\ll \delta$ such that the exponent of $N_{\fk{l}_1}$ is strictly negative. 
			 This completes the proof of Proposition \ref{energyestimate}.
	\end{proof}


	\subsection{Proof of the key estimates}
	
	\begin{proof}[Proof of Lemma \ref{nopairingl=3}] 
	Without loss of generality, we denote the non-root node $\fk{n}_0=\fk{r}_1$.  Since cutting paired leaves within the same generation will not change the structure of the tree, we further assume that there is no pairing within $\mathcal{L}_{\fk{r}}$ or $\mathcal{L}_{\fk{n}_0}$.

	Recall that $l=\#\mathcal{L}_{\delta}^*$, the first $l$-Gaussians $a^{(\fk{l}_1)},\cdots a^{(\fk{l}_{l})}$ are independent of $a^{(\fk{l}_j)}, j\geq l+1$ (if $l<2n_0$),
	the conditional Wiener chaos estimate (using the non-pairing condition for $k_{\fk{l}_1},\cdots,k_{\fk{l}_l}$) yields
	\begin{align}\label{Wiener}  \|\mathcal{T}_{e,\emptyset}(a^{(\fk{l})};\fk{l}\in\mathcal{L})\mathbf{1}_{\|\phi\|_{H^1}\leq \lambda} \|_{L^p(d\mu_s)}\lesssim p^{\frac{l}{2}}\Big(\prod_{j=1}^lN_{\fk{l}_j}^{-s}\Big)\Big(\sum_{k_{\fk{l}_i}:1\leq i\leq l}|\mathcal{U}(k_{\fk{l}_1},\cdots,k_{\fk{l}_l})|^2
		\Big)^{\frac{1}{2}},
	\end{align}
	where
	$$ \mathcal{U}(k_{\fk{l}_1},\cdots,k_{\fk{l}_l})=\sum_{k_{\fk{l}_j}: \; l< j\leq 2n_0 }\mathbf{1}_{\sum_{\fk{l}}\iota_{\fk{l}}k_{\fk{l}}=0}\frac{\psi_{2s}(\vec{k}_{\fk{r}})}{\Omega(\vec{k}_{\fk{r}})}\prod_{j=l+1}^{2n_0}(a_{k_{\fk{l}_j}}^{(\fk{l}_j)})^{\iota_{\fk{l}_j}},
	$$
	and in the above sums, we implicitly insert the cutoffs $N_{\fk{l}_j}\leq |k_{\fk{l}_j}|<2N_{\fk{l}_j}$.

	By Cauchy-Schwarz, we have
	\begin{align}
		&\sum_{k_{\fk{l}_i}:1\leq i\leq l } 
		|\mathcal{U}(k_{\fk{l}_1},\cdots,k_{\fk{l}_l})|^2
		\notag\\
		\lesssim &\Big(\prod_{j=l+1}^{2n_0}\|a_{k_{\fk{l}_j}}^{(\fk{l}_j)}\|_{l^2}^2\Big)\sum_{k_{\fk{l}_j}; 1\leq j\leq l}\Big(\sum_{k_{\fk{l}_i},l< i\leq 2n_0 }\Big|\frac{\psi_{2s}(\vec{k}_{\fk{r}})}{\Omega(\vec{k}_{\fk{r}}) } \Big|^2 \mathbf{1}_{\sum_{j=1}^{2n_0}\iota_{\fk{l}_j}k_{\fk{l}_j}=0 }\Big) \label{presumck}
		\\ \lesssim &\Big(\prod_{j=l+1}^{2n_0}\lambda^2N_{\fk{l}_j}^{-2} \Big)
		\sum_{(k_{\fk{n}}:\fk{n}\in\mathcal{T})} \Big|\frac{\psi_{2s}(\vec{k}_{\fk{r}})}{\Omega(\vec{k}_{\fk{r}}) } \Big|^2.
		\label{sumck} 
	\end{align}
	Applying Lemma \ref{lem:wcounting}, we deduce that
	\begin{align}\label{output3leaves} 
		\sum_{k_{\fk{l}_i}:1\leq i\leq l }|\mathcal{U}(k_{\fk{l}_1},\cdots,k_{\fk{l}_l})|^2\lesssim_{\epsilon} \lambda^{2(2n_0-l)}\cdot N_{\fk{l}_1}^{4s-2+\epsilon}N_{\fk{l}_3}^4\prod_{j=4}^lN_{\fk{l}_j}^2,
	\end{align}
	where we adopt the convention that $\prod_{j=4}^lN_{\fk{l}_j}^2:=1$ if $l=3$. Plugin, we have
	\begin{align*} 
		\|\mathcal{T}_{e,\emptyset}(a^{(\fk{l})};\fk{l}\in\mathcal{L})\mathbf{1}_{\|\phi\|_{H^1}\leq \lambda} \|_{L^p(d\mu_s)}\lesssim &p^{\frac{l}{2}}\lambda^{2n_0-l}N_{\fk{l}_1}^{-1+}N_{\fk{l}_3}^{-(s-2)}\prod_{j=4}^lN_{\fk{l}_j}^{-(s-1)}\\
		\leq & p^{\frac{l}{2}}\lambda^{2n_0-l}N_{\fk{l}_1}^{-\big((l-2)(s-1)(1-\delta) +\delta\big)+}.
	\end{align*} 
	thanks to the fact that $N_{\fk{l}_j}>N_{\fk{l}_1}^{1-\delta}$ for $1\leq j \leq l$.

	On the other hand, by Lemma \ref{Basictreeestimate} (\eqref{eq:dyadictreeestimate'}), we have
	$$ \|\mathcal{T}_{e,\emptyset}(a^{(\fk{l})};\fk{l}\in\mathcal{L})\mathbf{1}_{\|\phi\|_{H^1}\leq \lambda} \|_{L^p(d\mu_s)}\lesssim_{\epsilon} \lambda^{2n_0}N_{\fk{l}_1}^{2(s-2)+\epsilon}N_{\fk{l}_3}.
	$$
	Interpolating these two inequalities, we complete the proof of Lemma \ref{nopairingl=3}.
	
\end{proof}

\begin{proof}[Proof of Lemma \ref{nopairinglg=2}]
	
Following the same argument as in the proof of Lemma \ref{nopairingl=3}, we obtain the estimate

	\begin{align*} 
	\|\mathcal{T}_{e,\emptyset}(a^{(\fk{l})};\fk{l}\in\mathcal{L})\mathbf{1}_{\|\phi\|_{H^1}\leq \lambda} \|_{L^p(d\mu_s)}\lesssim_{\epsilon}\hspace{0.1cm} &p\lambda^{2n_0-2}N_{\fk{l}_1}^{-2s}\cdot \prod_{j=3}^{2n_0}N_{\fk{l}_j}^{-1}\cdot N_{\fk{l}_1}^{(2s-1)+\epsilon}N_{\fk{l}_3}^2\prod_{j=4}^{2n_0}N_{\fk{l}_j}\\
	=& p\lambda^{2n_0-2}N_{\fk{l}_1}^{-1+\epsilon}N_{\fk{l}_3}\leq p\lambda^{2n_0-2}N_{\fk{l}_1}^{-\delta+\epsilon},
\end{align*} 
since by definition $l=\#\mathcal{L}_{\delta}^*=2$, we have $N_{\fk{l}_3}\leq N_{\fk{l}_1}^{1-\delta}$. Interpolating with any trivial deterministic estimate, we complete the proof of Lemma \ref{nopairinglg=2}.
\end{proof}	


	\section{On the global nonlinear smoothing effect}
	
\subsection{The global nonlinear smoothing effect }
Let us start with a local nonlinear smoothing result, essentially due to Bourgain \cite{Bo96}, see also \cite{CLSta} or \cite{OWang} for a pedagogical proof :
\begin{prop}\label{localstructure}
	Let $s>2$, $\sigma<s-1, s_1<s-\frac{1}{2}$. For any $R\geq 1$, there exists a Borel measurable set $\Sigma_R\subset H^{\sigma}(\T^2)$, such that for any $\phi\in \Sigma_R$, 
	the solution $u(t)$ of the cubic NLS \eqref{NLS} with initial data $\phi$ on $t\in[-R^{-\kappa_0},R^{-\kappa_0}]$, with some absolute constant $\kappa_0>0$,
satisfies 
	$$ \sup_{|t|\leq R^{-\kappa_0}}\Big\|
	\e^{\frac{it}{2\pi^2}\int_{\T^2}|\phi^{\omega}|^2dx}\cdot u(t)- \e^{it\Delta}\phi^{\omega}
	\Big\|_{H^{s_1}(\T^2)}\leq 1.
	$$
	Moreover, the complement of the set $\Sigma_R$ has $\mu_s$-measure less than $C\e^{-cR^{\alpha}}$, for some absolute constants $C,c>0$ and $ \alpha\in (0,1)$.
\end{prop}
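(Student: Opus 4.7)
The plan is a Da Prato--Debussche scheme, along the lines of \cite{Bo96,CLSta}. Apply the gauge $U(t):=\e^{\frac{it}{2\pi^{2}}\int_{\T^2}|\phi^\omega|^{2}dx}\,u(t)$; by conservation of the $L^{2}$ mass, $U$ solves the Wick--renormalised cubic NLS
\[
i\partial_t U+\Delta U\;=\;:\!|U|^{2}U\!:,\qquad U(0)=\phi^{\omega}.
\]
Setting $v(t):=\e^{it\Delta}\phi^{\omega}$ and $w:=U-v$, the residual $w$ (with $w(0)=0$) satisfies the fixed-point equation
\[
w(t)\;=\;z(t)\;+\;\mathcal{N}(v,w)(t),\qquad
z(t):=-i\!\int_{0}^{t}\e^{i(t-t')\Delta}\bigl(:\!|v|^{2}v\!:\bigr)(t')\,dt',
\]
with $\mathcal{N}(v,w)$ collecting the remaining terms, each of which contains at least one factor of $w$. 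I would solve this by contraction in a small ball of a Bourgain space $X^{s_1,b}_T$ with $b=\tfrac12+$ on $[-T,T]$, for $T=R^{-\kappa_0}$ to be chosen.

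The first main step is the probabilistic estimate for the second Picard iterate $z$ at the target regularity. Expanding $v$ in its Gaussian Fourier series, $:\!|v|^{2}v\!:$ is a third-order Wiener chaos element whose Fourier coefficient at a frequency $k$ carries the oscillation $\e^{-it'\Omega(\vec k)}$ with $\Omega(\vec k)=|k_1|^2-|k_2|^2+|k_3|^2-|k|^2$; crucially, the Wick ordering removes the pair contractions $k_1=k_2$ and $k_3=k_2$ that drive the divergence of Proposition~\ref{prop:lacksmoothing}. Time integration produces the weight $\Omega(\vec k)^{-1}$, which, combined with the two-dimensional three-vector counting bound (\lemref{threevectorcounting} with $n=3$) and Wiener chaos hypercontractivity, yields
\[
\mathbb P\bigl(\|z\|_{X^{s_1,1/2+}_{[-1,1]}}>R\bigr)\;\le\; C\e^{-cR^{2/3}}\qquad \text{for every }s_1<s-\tfrac12.
\]
Analogous Gaussian tails control $\|\phi^{\omega}\|_{H^{\sigma}}$, the energy $H[\phi^\omega]$ (almost surely finite because $s>2$), and the random Strichartz norms of $v$ needed below. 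Declaring $\Sigma_R$ to be the intersection of these events gives $\mu_s(\Sigma_R^c)\le C\e^{-cR^{\alpha}}$ for some $\alpha\in(0,1)$, and $\Sigma_R$ is Borel as a countable intersection of open events.

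The second main step is the contraction argument on $\Sigma_R$. Deterministic trilinear $X^{s,b}$ estimates on $\T^2$ (as in \cite{BoGAFA,BGT-1}), together with the bilinear Strichartz refinement of \cite{BGT} for high--high interactions, bound each sub-expression of $\mathcal{N}(v,w)$ by $T^{\beta}R^{C}\|w\|_{X^{s_1,b}_T}^{k}$ with $k\in\{1,2,3\}$ and some $\beta>0$; the $T^\beta$ gain comes from $b>\tfrac12$, while $R^C$ absorbs the $v$-dependent random norms controlled on $\Sigma_R$. Combined with the short-time bound $\|z\|_{X^{s_1,b}_T}\le T^{\beta}R$, choosing $\kappa_0$ large enough in terms of $\beta$, $C$ and the multilinearity makes the map $\Psi(w)=z+\mathcal{N}(v,w)$ a contraction on the unit ball of $X^{s_1,b}_T$, yielding $w$ with $\|w\|_{L^\infty_T H^{s_1}}\le 1$. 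The main obstacle is the probabilistic smoothing bound on $z$: extracting the near half-derivative gain at the threshold $s_1<s-\tfrac12$ requires a sharp two-dimensional lattice counting that exploits the full strength of the Wick cancellation, and it is precisely this step that fails for deterministic initial data on $\T^2$, as witnessed by Proposition~\ref{prop:lacksmoothing}.
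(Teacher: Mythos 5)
The paper does not actually prove this proposition: it is stated as ``essentially due to Bourgain \cite{Bo96}'' with pedagogical proofs deferred to \cite{CLSta} and \cite{OWang}. Your sketch reproduces the architecture of those references (gauge transform, Da Prato--Debussche decomposition $U=v+w$, probabilistic control of the second Picard iterate $z$, contraction for $w$ in $X^{s_1,b}_T$), and the first step --- the Wiener-chaos/counting bound on $z$ with Gaussian-type tails, the identification of the Wick cancellation, and the Borel structure and measure of $\Sigma_R$ --- is correct in substance.

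The gap is in your second step. You claim that every term of $\mathcal N(v,w)$ is controlled by \emph{deterministic} trilinear $X^{s,b}$ estimates together with the deterministic bilinear Strichartz refinement of \cite{BGT}, with the $v$-dependence absorbed into random Strichartz norms. This cannot close at output regularity $s_1>\sigma$. Consider the interaction $P_N v\cdot\overline{P_{L_1}v}\cdot P_{L_2}w$ with $N\gg L_1,L_2$: the output lives at frequency $\sim N$, so one needs a factor $N^{s_1}$, while the only decay available deterministically is $\|P_N\phi^\omega\|_{L^2}\sim N^{-(s-1)}$; since the bilinear estimate of \cite{BGT} on $\T^2$ gains only $L^{0+}$ (not $(L/N)^{1/2}$ as on $\R^2$), one is left with $N^{s_1-(s-1)+}=N^{1/2-}$, which is not summable over dyadic $N$ for any $s_1$ close to the claimed threshold $s-\tfrac12$. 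In other words, a deterministic multilinear estimate with output regularity strictly above the input regularity $\sigma$ is exactly the kind of smoothing estimate that Proposition \ref{prop:lacksmoothing} rules out. Closing this term is the heart of \cite{Bo96,CLSta,OWang}: one must exploit either the non-resonant factorization $\Omega=-2(k_1-k_2)\cdot(k_3-k_2)$ together with a careful treatment of the nearly resonant set, or a \emph{probabilistic} bilinear improvement for $e^{it\Delta}\phi^\omega$ against a deterministic function (recovering the $(\min/\max)^{1/2}$ gain of the Euclidean bilinear estimate with high probability), and it is this term, on a par with $z$, that produces the threshold $s_1<s-\tfrac12$. Your mention of ``random Strichartz norms of $v$'' is not specific enough to supply this input, since $L^p_{t,x}$ bounds on $v$ alone do not gain the required negative power of the highest frequency. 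The rest of the sketch (choice of $\kappa_0$, Borel measurability, tail exponent $\alpha$) is fine.
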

Now we are able to extend Proposition \ref{localstructure} to any time interval $[-T,T]$, thus proving Theorem \ref{globalstructure}.
	\begin{proof}[Proof of Theorem \ref{globalstructure}]
		By Theorem \ref{thm:main},
		$$ \mathrm{supp}(\mu_s)=\bigcup_{\lambda=1}^{\infty}\mathrm{supp}(\mu_{s,\lambda}).
		$$
		It suffices to show that, for any fixed $\lambda>0$, $\mu_{s,\lambda}$-almost surely, the decomposition  \eqref{recenter2} holds. 
		In the sequel, all the constants $C$ can implicitly depend on $\lambda$, without additional declaration.
		
		Fix any $T\ge 1$. For $R\geq 1$, denote by $\Sigma_R$, the data set in Theorem \ref{localstructure}. 
		By Proposition \ref{weightedmeasure}, we have $\rho_{s,\lambda}(( \Sigma_{R})^c)<C\e^{-cR^{\alpha}}$.
		Set $\tau=R^{-\kappa_0}$, define $$\Sigma_{R,k}:=\Phi^{-1}_{k\tau}(\Sigma_{R}),\;\text{ for } |k|\leq K_0:=\frac{T}{\lfloor \tau\rfloor}+1\sim TR^{\kappa_0}. $$
		Let
		$$ \widetilde{\Sigma}_{R,T}:=\bigcap_{k=-K_0}^{K_0}\Sigma_{R,k}.
		$$
		We have
		\begin{align}
			\rho_{s,\lambda}((\widetilde{\Sigma}_{R,T})^c)\leq &\sum_{|k|\leq K_0}\rho_{s,\lambda}((\Sigma_{R,k})^c)
			=\sum_{|k|\leq K_0}\rho_{s,\lambda}(\Phi_{k\tau}^{-1}((\Sigma_{R,0})^c)) \notag\\
			\leq &\sum_{|k|\leq K_0}\|f_{\lambda}(k\tau,\cdot)\|_{L^2(d\rho_{s,\lambda})}^{\frac{1}{2}}\big(\rho_{s,\lambda}((\Sigma_{R})^c)\big)^{\frac{1}{2}},\notag 
		\end{align}
		where we used the definition of the Radon-Nikodym density and Cauchy-Schwarz in the last step. 
	   
	   Now from the transported density bound
		$$ 
		\sup_{|t|\leq T}\|f_{\lambda}(t,\cdot)\|_{L^p(d\rho_{s,0,\lambda})}\leq C\e^{1+T^{C_0}},
		$$
		we obtain that
		\begin{align}\label{bdtildeSigma}  \rho_{s,\lambda}(( \widetilde{\Sigma}_{R,T} )^c )\leq C\e^{C(1+T^{C_0})}\cdot TR^{\kappa_0}\e^{-cR^{\alpha}}.
		\end{align}
		Let
		$$ \widetilde{\Sigma}_T:=\bigcup_{R_1=1}^{\infty}\bigcap_{R=R_1}\widetilde{\Sigma}_{R,T}.
		$$
		Thanks to \eqref{bdtildeSigma} and the Borel-Cantelli Lemma, we deduce that $\widetilde{\Sigma}_T$ has the full $\rho_{s,\lambda}$ measure. 
		
		To complete the proof, it remains to show that for any $\phi\in \widetilde{\Sigma}_T$, it admits the decomposition \eqref{recenter2}. This follows from the superposition of the re-centering structure of each small interval of the local theory.
		
		More precisely,  by definition of $\widetilde{\Sigma}_T$, there exists $R_1\geq 1$ such that $\phi\in \widetilde{\Sigma}_{R,T}$ for all $R\geq R_1$. Let us fix some $R\geq R_1$ and denote by $I_k=[(k-1)\tau,k\tau]$, $\tau=R^{-\kappa_0}$, for $1\leq k\leq K_0=T/\lfloor\tau \rfloor+1$. Since $\Phi_{(k-1)\tau}\phi\in \Sigma_{R}$, for $t\in I_k$, by the flow property, we have the decomposition
		$$ \Phi_t\phi=\Phi_{t-k\tau+\tau}\circ\Phi_{(k-1)\tau}\phi=\e^{-\frac{it}{2\pi^2}\|\phi\|_{L^2}^2}\cdot\big(\e^{i(t-(k-1)\tau)\Delta}(\Phi_{(k-1)\tau}\phi)+ w_k(t)\big),\;\text{ for }t\in I_k
		$$
		with $\|w_k(t)\|_{H^{s_1}}\leq 1$,
		where we have used the mass conservation $\|\Phi_t\phi\|_{L^2}=\|\phi\|_{L^2}$. Iterating $K_0$ steps, we obtain \eqref{recenter2} with an estimate
		$$ \sup_{0\leq t\leq T}\|w(t)\|_{H^{s_1}(\T)}\leq CR^{\kappa_0}T,
		$$
		since we add up $O(R^{\kappa_0}T)$ terms in the space $H^{s_1}$ with norm $O(1)$. 
		The proof of Theorem \ref{globalstructure} is now complete.
	\end{proof}

\subsection{Lack of regularization of the first Picard's iteration }
	
\begin{proof}[Proposition of \ref{prop:lacksmoothing}]  
	For a given initial datum $\phi$, let $\mathcal{I}_1(\phi;t)$ denote the first Picard iterate of the Duhamel term. A direct computation in Fourier variables gives
	\begin{align}\label{eq:FirstPicardPatch}
		\widehat{\mathcal{I}_1(t;\phi)}_k
		&= \sum_{\substack{k_1-k_2+k_3=k\\ k_2\neq k_1,k_3}}
		\frac{2\sin\!\big(\tfrac{t\,\Omega(\vec{k})}{2}\big)}{i\,\Omega(\vec{k})}\,e^{-i\frac{t\,\Omega(\vec{k})}{2}}
		\,\widehat{\phi}(k_1)\,\overline{\widehat{\phi}}(k_2)\,\widehat{\phi}(k_3)
		\;+\;\frac{t}{i}\,|\widehat{\phi}(k)|^2\,\widehat{\phi}(k),
	\end{align}
	where $\Omega(\vec{k}):=|k_1|^2-|k_2|^2+|k_3|^2-|k|^2$.
	
	Fix $n\ge1$ and define $\widehat{\phi_n}$ as a sum of one thin vertical slab and two atoms:
	\[
	\widehat{\phi_n}(\xi,\eta)\;=\;f_n(\xi,\eta)+a_1(\xi,\eta)+a_2(\xi,\eta),
	\]
	with
	\[
	f_n(\xi,\eta):=n^{-\sigma-\frac12}\,\mathbf{1}_{\{\xi=0,\ \eta\in[n,2n]\}},\qquad
	a_1(\xi,\eta):=\mathbf{1}_{\{\xi=1,\ \eta=0\}},\qquad
	a_2(\xi,\eta):=\mathbf{1}_{\{\xi=2,\ \eta=0\}}.
	\]
	Then $\|\phi_n\|_{H^\sigma}\sim 1$ uniformly in $n$, since
	\[
	\sum_{\eta=n}^{2n}\langle(0,\eta)\rangle^{2\sigma}\,n^{-2\sigma-1}\;\sim\;1,
	\qquad
	\|(a_1+a_2)\|_{H^\sigma}\lesssim 1.
	\]
	
	We now evaluate \eqref{eq:FirstPicardPatch} at frequencies $k=(1,\eta)$ with $\eta\in[n,2n]$. 
	Among all triples $(k_1,k_2,k_3)$ with $k_1-k_2+k_3=k$, the only ones that produce a nonzero contribution with the above data and land at $(1,\eta)$ are the two placements
	\[
	\big(k_1,k_2,k_3\big)=\big((0,\eta),(1,0),(2,0)\big)\quad\text{and}\quad
	\big(k_1,k_2,k_3\big)=\big((2,0),(1,0),(0,\eta)\big).
	\]
	In both cases,
	\[
	\Omega(\vec{k})=|k_1|^2-|k_2|^2+|k_3|^2-|k|^2=2,
	\]
	so the time prefactor equals
	\[
	\frac{2\sin\!\big(\frac{t\Omega}{2}\big)}{i\,\Omega}\,e^{-i\frac{t\Omega}{2}}
	=\frac{2\sin t}{2i}\,e^{-it}
	=\frac{\sin t}{i}\,e^{-it}.
	\]
	Moreover, the gauge term $\frac{t}{i}|\widehat{\phi}_k|^2\widehat{\phi}_k$ vanishes at $k=(1,\eta)$ for $\eta\in[n,2n]$, since $\widehat{\phi_n}(1,\eta)=0$ there. 
	Therefore, for all $\eta\in[n,2n]$,
	\[
	\widehat{\mathcal{I}_1(t;\phi_n)}_{(1,\eta)}
	= C\,\sin t\,e^{-it}\;n^{-\sigma-\frac12},
	\]
	where $C\neq 0$ is an absolute constant (coming from the two contributing triples). In particular,
	\[
	\big|\widehat{\mathcal{I}_1(t;\phi_n)}_{(1,\eta)}\big|
	\gtrsim |\sin t|\;n^{-\sigma-\frac12},\qquad \eta\in[n,2n].
	\]
	
	Consequently, using $\langle(1,\eta)\rangle^{2\sigma_1}\sim \eta^{2\sigma_1}\sim n^{2\sigma_1}$ on the block $\eta\in[n,2n]$, we obtain
	\[
	\|\mathcal{I}_1(t;\phi_n)\|_{H^{\sigma_1}}^2
	\ \ge\ \sum_{\eta=n}^{2n}\langle(1,\eta)\rangle^{2\sigma_1}\,
	\big|\widehat{\mathcal{I}_1(t;\phi_n)}_{(1,\eta)}\big|^2
	\ \gtrsim\ n\cdot n^{2\sigma_1}\cdot\big(|\sin t|^2\,n^{-2\sigma-1}\big)
	\ =\ |\sin t|^2\,n^{2(\sigma_1-\sigma)}.
	\]
	Hence, for every $t\notin E:=\pi\Z$, we have $\|\mathcal{I}_1(t;\phi_n)\|_{H^{\sigma_1}}\to\infty$ as $n\to\infty$ whenever $\sigma_1>\sigma$. This completes the proof of Proposition \ref{prop:lacksmoothing}.
\end{proof}


	
	\appendix


	\section{$L^p$ bound for the Radon--Nikodym density}
	
	In this appendix, we state an abstract lemma to link a differential inequality of a family of equivalent transported measures to the $L^p$ bound of the corresponding transported density. 
	\begin{lem}\label{abstract}
		Let $X$ be a metric space and let $(\mu_t)_{t\in\mathbb{R}}$ be a family of finite Borel measures on $X$ that are pairwise equivalent. Assume that for every $p\in(1,\infty)$ there exists $M_p>0$ such that for all $t_1,t_2\in\mathbb{R}$,
		\begin{equation}\label{changeweight}
			\Big\|\frac{d\mu_{t_1}}{d\mu_{t_2}}\Big\|_{L^p(d\mu_{t_2})}\le M_p.
		\end{equation}
		
		Let $\Phi_t:X\to X$ be a continuous flow and assume that, for every Borel set $A\subset X$, the map $t\mapsto \mu_t(\Phi_t^{-1}(A))$ is differentiable. Moreover, assume that there exist $C_0>0$ and $\alpha\in(0,1)$ such that for any Borel set $A\subset X$, any $r\in(1,\infty)$, and any $t\in\mathbb{R}$,
		\begin{equation}\label{differential}
			\frac{d}{dt}\,\mu_t(\Phi_t^{-1}(A))\ \le\ C_0\, r^{1-\alpha}\,\mu_t(\Phi_t^{-1}(A))^{\,1-\frac1r}.
		\end{equation}
		Then for any $\delta\in(0,1)$ and $T\ge 1$, there exists $C_{\delta,T}>0$ such that, uniformly for $|t|\le T$ and all Borel $A$,
		\begin{equation}\label{recursive}
			\mu_t(\Phi_t^{-1}(A))\ \le\ C_{\delta,T}\,\mu_t(A)^{\,1-\delta}.
		\end{equation}
		Furthermore, for the Radon--Nikodym density
		\[
		f_t:=\frac{d\big((\Phi_t)_*\mu_t\big)}{d\mu_t},
		\]
		we have, for every $p\in(1,\infty)$,
		\begin{equation}\label{Lp:ftu}
			\sup_{|t|\le T}\|f_t\|_{L^p(d\mu_t)}\ \le\ C(\alpha,p)\,\exp\!\Big(C(\alpha,p)\big(1+T^{2/\alpha}\big)\Big).
		\end{equation}
	\end{lem}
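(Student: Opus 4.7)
The approach is to first pass from the differential inequality (\ref{differential}) to the level-set comparison (\ref{recursive}) via time-integration combined with the equivalence of measures (\ref{changeweight}), and then deduce the $L^p$-bound (\ref{Lp:ftu}) from (\ref{recursive}) by the standard distribution-function / layer-cake argument applied to the super-level sets of $f_t$.

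For the first step, set $h_A(t) := \mu_t(\Phi_t^{-1}(A))$ and observe that (\ref{differential}) rewrites as $\frac{d}{dt}\bigl[h_A(t)^{1/r}\bigr] \le C_0 r^{-\alpha}$ for every $r > 1$. Integrating from $0$ to $t$ yields
\[
h_A(t)^{1/r} \le \mu_0(A)^{1/r} + C_0 r^{-\alpha}|t|, \qquad r \in (1,\infty),\ |t| \le T.
\]
By H\"older applied to (\ref{changeweight}), $\mu_0(A) \le M_p\,\mu_t(A)^{1-1/p}$ for any $p\in(1,\infty)$. Substituting with $p = 1/\delta$, choosing $r = r_T$ of order $T^{1/\alpha}$ so that $(C_0 r_T^{-\alpha}T)^{r_T}$ is controlled by a constant, and raising to the $r_T$-th power via $(a+b)^{r_T} \le 2^{r_T-1}(a^{r_T}+b^{r_T})$, gives
\[
h_A(t) \le 2^{r_T - 1}\,M_{1/\delta}\,\mu_t(A)^{1-\delta} + 2^{r_T - 1}\bigl(C_0 r_T^{-\alpha}T\bigr)^{r_T}.
\]
The additive constant term is absorbed by a two-regime analysis. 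For $\mu_t(A) \ge \eta_0(\delta, T)$, with $\eta_0$ a suitable threshold, the first summand dominates. For $\mu_t(A) < \eta_0$ one refines the initial integration by choosing $r$ adaptively in terms of $\mu_0(A)$, namely so that $\mu_0(A)^{1/r} \sim C_0 r^{-\alpha}T$; this yields a multiplicative bound $h_A(t) \le 2^r \mu_0(A)$ which, combined once more with (\ref{changeweight}), also fits the form $C_{\delta,T}\,\mu_t(A)^{1-\delta}$. Combining the two regimes produces (\ref{recursive}) with $C_{\delta, T} \le \exp\!\bigl(C (T/\delta)^{1/\alpha}\bigr)$ times a polynomial in $M_{1/\delta}$.

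For the second step, apply (\ref{recursive}) with $\delta = 1/(2p)$ to the super-level set $A_\lambda^t := \{f_t > \lambda\}$. Since $f_t = d((\Phi_t)_*\mu_t)/d\mu_t$, Chebyshev gives
\[
\lambda\,\mu_t(A_\lambda^t) \le \int_{A_\lambda^t} f_t\,d\mu_t = (\Phi_t)_*\mu_t(A_\lambda^t) = \mu_t(\Phi_t^{-1}(A_\lambda^t)) \le C_{\delta,T}\,\mu_t(A_\lambda^t)^{1-\delta},
\]
so $\mu_t(A_\lambda^t) \le (C_{\delta,T}/\lambda)^{1/\delta}$. The layer-cake identity $\|f_t\|_{L^p(d\mu_t)}^p = p\int_0^\infty \lambda^{p-1}\mu_t(A_\lambda^t)\,d\lambda$, split at $\lambda_0 := C_{\delta,T}/\mu_t(X)^\delta$ (using $\mu_t(A_\lambda^t)\le \mu_t(X)$ on $[0,\lambda_0]$ and the power bound beyond), converges since $\delta = 1/(2p) < 1/p$ and yields $\|f_t\|_{L^p(d\mu_t)} \lesssim_p C_{1/(2p), T}\,\mu_t(X)^{1/(2p)}$. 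Inserting the bound on $C_{1/(2p), T}$ from Step 1 and carefully tracking the $p$- and $T$-dependencies produces the claimed $C(\alpha, p)\exp(C(\alpha, p)(1 + T^{2/\alpha}))$.

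The main obstacle is the handling of the additive constant $2^{r-1}(C_0 r^{-\alpha}T)^r$ in Step 1, which arises from the elementary inequality $(a+b)^r \le 2^{r-1}(a^r+b^r)$ and cannot be directly absorbed into $\mu_t(A)^{1-\delta}$ for sets of very small $\mu_t$-measure. The resolution is to choose $r$ adaptively in terms of $\mu_0(A)$ in the small-measure regime and to propagate this data-dependent choice through (\ref{changeweight}); combining this with the optimization of $\lambda_0$ in Step 2 is what ultimately produces the precise exponent $T^{2/\alpha}$ stated in (\ref{Lp:ftu}).
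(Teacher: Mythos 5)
Your proposal is correct and follows essentially the same route as the paper's proof: integrate the differential inequality to get $h_A(t)^{1/r}\le \mu_0(A)^{1/r}+C_0|t|r^{-\alpha}$, convert $\mu_0(A)$ to $\mu_t(A)$ via H\"older and \eqref{changeweight}, split into a large-measure regime (fixed $r$) and a small-measure regime with $r$ chosen adaptively in terms of the measure of the set (the paper's choice $r=b/\log\log b$ with $\mu_t(E)=e^{-b}$ is the same device as your $\mu_0(A)^{1/r}\sim C_0r^{-\alpha}T$), and finally pass from the resulting weak-type/set-level bound to the strong $L^p$ bound by layer-cake/interpolation with $L^1$. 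The only difference is cosmetic: the paper proves the weak-$L^q$ bound on $f_t$ first and obtains \eqref{recursive} as a corollary of \eqref{Lp:ftu}, whereas you establish \eqref{recursive} first and then deduce \eqref{Lp:ftu}; the underlying estimates are identical.
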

	\begin{proof}
		The argument below is essentially the same appeared in \cite{GLTz}. 
		By definition, for any Borel set $A\subset X$,
		\[
		\mu_t(\Phi_t^{-1}(A))=\int_A f_t\,d\mu_t.
		\]
		Thus \eqref{recursive} follows from \eqref{Lp:ftu} and Hölder. It suffices to prove the weak $L^q$ bound, uniform in $|t|\le T$, for any fixed $q\in(1,\infty)$:
		\begin{equation}\label{weakLqbound}
			\sup_{|t|\le T}\|f_t\|_{L^{q,\infty}(d\mu_t)}\ \le\ C(\alpha,q)\,\exp\!\Big(C(\alpha,q)\big(1+T^{2/\alpha}\big)\Big).
		\end{equation}
		We use the standard characterization (\cite{Grafa}: Page 11, Exercise 1.1.12):
		\begin{equation}\label{weaknorm}
			\|f_t\|_{L^{q,\infty}(d\mu_t)}\simeq \sup_{\substack{E\subset X\\ \mu_t(E)>0}}\ \frac{1}{\mu_t(E)^{1-\frac{1}{q}}}\int_E f_t\,d\mu_t.
		\end{equation}
		
		Fix a Borel set $E\subset X$ and set $y(t):=\mu_t(\Phi_t^{-1}(E))$. By \eqref{differential},
		\[
		y'(t)\le C_0\,r^{1-\alpha}\,y(t)^{1-\frac{1}{r}}\quad\Rightarrow\quad y(t)^{1/r}\le y(0)^{1/r}+C_0|t|\,r^{-\alpha}.
		\]
		Hence, for $|t|\le T$,
		\begin{equation}\label{Differentialine}
			\mu_t(\Phi_t^{-1}(E))\le\Big(\mu_0(E)^{1/r}+C_0T\,r^{-\alpha}\Big)^r.
		\end{equation}
		Using \eqref{changeweight} with $p=2q$ and Hölder,
		\[
		\mu_0(E)\le \Big\|\frac{d\mu_0}{d\mu_t}\Big\|_{L^{2q}(d\mu_t)}\,\mu_t(E)^{1-\frac{1}{2q}}\le M_{2q}\,\mu_t(E)^{1-\frac{1}{2q}}.
		\]
		Plugging this into \eqref{Differentialine},
		\[
		\mu_t(\Phi_t^{-1}(E))\le\Big(M_{2q}\,\mu_t(E)^{\frac{1-\frac{1}{2q}}{r}}+C_0T\,r^{-\alpha}\Big)^r.
		\]
		Now, by \eqref{weaknorm},
		\begin{align}
			\|f_t\|_{L^{q,\infty}(d\mu_t)}
			&\le \sup_{0<\mu_t(E)\le e^{-b_0}}\ \mu_t(E)^{-\big(1-\frac{1}{q}\big)}\mu_t(\Phi_t^{-1}(E))\nonumber\\
			&\quad +\ \sup_{\mu_t(E)\ge e^{-b_0}}\ \mu_t(E)^{-\big(1-\frac{1}{q}\big)}\mu_t(\Phi_t^{-1}(E)).\label{split}
		\end{align}
		For the first term, writing $\mu_t(E)=e^{-b}$ and optimizing in $r>1$,
		\[
		\sup_{b\ge b_0}\ \exp\!\Big(r\log M - \tfrac{b}{2q}+ T r^{1-\alpha}e^{\frac{b}{r}\big(1-\frac{1}{2q}\big)}\Big),
		\]
		with $M:=\max\{C_0,M_{2q}\}$. Choosing $r=\frac{b}{\log\log b}$ and taking $b_0$ large enough so that
		\[
		\frac{\log b}{(\log\log b)^{1-\alpha}}\le b^{\alpha/2},\quad
		b\ge \Big(\tfrac{T}{4q}\Big)^{2/\alpha},\quad
		b\ge \exp\exp\Big(\tfrac{\log M}{4q}\Big),
		\]
		one obtains that the exponent is $\le 0$ for all $b\ge b_0$, so this contribution is $\lesssim 1$.
		
		For the second term in \eqref{split}, take $r=2$ in \eqref{Differentialine} to get
		\[
		\mu_t(\Phi_t^{-1}(E))\le \Big(M_{2q}\,\mu_t(E)^{\frac{1-\frac{1}{2q}}{2}}+C_0T\,2^{-\alpha}\Big)^2.
		\]
		Therefore, for all $\mu_t(E)\ge e^{-b_0}$,
		\[
		\mu_t(E)^{-\big(1-\frac{1}{q}\big)}\mu_t(\Phi_t^{-1}(E))\ \lesssim\ e^{b_0(1-\frac{1}{q})}\,\Big(M_{2q}^2\,\mu_t(X)^{1-\frac{1}{2q}}+C_0^2T^2\Big).
		\]
		Choosing $b_0\simeq c_\alpha \big(1+T^{2/\alpha}\big)$ as above yields \eqref{weakLqbound}. Finally, interpolate between weak–$L^q$ and $L^1$ (note $\|f_t\|_{L^1(d\mu_t)}=\mu_t(X)$) to obtain \eqref{Lp:ftu}. Using \eqref{Lp:ftu} and Hölder gives \eqref{recursive}.
	\end{proof}


\end{document}